\numberwithin{figure}{section}
\numberwithin{table}{section}
\newtheorem{theorem}{Theorem}[section]
\newtheorem{lemma}[theorem]{Lemma}
\newtheorem{conjecture}[theorem]{Conjecture}
\newtheorem{corollary}[theorem]{Corollary}
\theoremstyle{definition}
\newtheorem{definition}[theorem]{Definition}
\newtheorem{example}[theorem]{Example}
\newtheorem{proposition}[theorem]{Proposition}
\newtheorem{remark}[theorem]{Remark}
\numberwithin{equation}{section}
\newcommand{\Ext}{\operatorname{Ext}}
\newcommand{\kqm}{$kQ$-$\mod$ }
\renewcommand{\mod}{\operatorname{mod}}
\def \N{{\mathbb N}}
\def \h{{\mathfrak h}}
\def \Z{{\mathbb Z}}
\def \A{{\mathfrak A}}
\def \[{[ }
\def \]{] }
\def \M{{\mathcal{M}}}
\def\F{\mathcal{F}}
\def\t{\widetilde}
\def\h{\widehat}
\title[Growth rate of cluster algebras]{Growth rate of cluster algebras}
\author{Anna Felikson, Pavel Tumarkin}
\address{Department of Mathematical Sciences, Durham University, Science Laboratories, South Road, Durham, DH1 3LE, UK}
\email{anna.felikson@durham.ac.uk\\
   pavel.tumarkin@durham.ac.uk}
\thanks{Research was partially supported by DFG grant FE-1241/2 (A.F.), grants DMS 0800671 and DMS 1101369 (M.S.), an NSERC Discovery Grant (H.T.), and RFBR grant 11-01-00289-a (P.T.)}
\author{Michael Shapiro}
\address{Department of Mathematics, Michigan State University, East Lansing, MI 48824, USA}
\email{mshapiro@math.msu.edu}
\author{Hugh Thomas}
\address{Department of Mathematics and Statistics, University of New Brunswick, Fredericton, NB,  
E3B 5A3, Canada}
\email{hugh@math.unb.ca}
\begin{document}
\maketitle

\begin{abstract}
We complete the computation of growth rate of cluster algebras. In particular, we show that growth of all exceptional non-affine mutation-finite cluster algebras is exponential.
\end{abstract}

\setcounter{tocdepth}{1}
\tableofcontents
\section{Introduction}
\label{intro}
This is the fourth paper in the series started in~\cite{FST1,FST2,FST3}.

Cluster algebras were introduced by Fomin and Zelevinsky in the series of papers~\cite{FZ1}, ~\cite{FZ2},~\cite{BFZ3},~\cite{FZ4}. Up to isomorphism, each cluster algebra is defined by a skew-symmetrizable $n\times n$ matrix called its {\it exchange matrix}. Exchange matrices admit {\it mutations} which can be explicitly described. The cluster algebra itself is a commutative algebra with a distinguished set of generators. All the generators are organized into \emph{clusters}. Each cluster contains exactly $n$ generators (\emph{cluster variables}) for a rank $n$ cluster algebra.

Clusters form a nice combinatorial structure. Namely, clusters can be associated with the vertices of $n$-regular tree where the collections of generators in neighboring vertices are connected by relations of an especially simple form called \emph{cluster exchange relations}.
Exchange relations are governed by the corresponding exchange matrix which in its turn undergoes cluster mutations as described above. The combinatorics of the cluster algebra is encoded  by its \emph{exchange graph}, which can be obtained from the $n$-regular tree by identifying vertices with equal clusters (i.e., the clusters containing the same collection of cluster variables).

This paper is devoted to the computation of the growth rate of exchange graphs of cluster algebras.  We say that a cluster algebra is \emph{of exponential growth} if the number of distinct vertices of the exchange graph inside a circle of radius $N$, i.e., that can be reached from an initial vertex in $N$ mutations, grows exponentially in $N$. We say that the growth of a cluster algebra is \emph{polynomial} if this number grows at most polynomially depending on $N$.

In~\cite{FST} Fomin, Shapiro and Thurston computed the growth of \emph{cluster algebras originating from surfaces} (or simply {\it cluster algebras from surfaces} for short). This special class of cluster algebras is characterized by their exchange matrices being signed adjacency matrices of ideal triangulations of marked bordered surfaces. In particular, these matrices are skew-symmetric (we call a cluster algebra \emph{skew-symmetric} if its exchange matrices are skew-symmetric, otherwise we call it \emph{skew-symmetrizable}). Such an algebra has polynomial growth if the corresponding surface is a sphere with at most three holes and marked points in total, and exponential growth otherwise.%, see Section~\ref{sec:all} for details.

Cluster algebras from surfaces have another interesting property: the collections of their exchange matrices (called {\it mutation classes}) are finite. We call such algebras (and exchange matrices) {\it mutation-finite}. It was shown in~\cite{FST1} that signed adjacency matrices of ideal triangulations almost exhaust the class of mutation-finite skew-symmetric matrices, namely, there are only eleven (exceptional) finite mutation classes of matrices of size at least $3\times 3$ not coming from triangulations of surfaces. It was also proved in~\cite{FST1} that skew-symmetric algebras that are not mutation-finite (we call them {\it mutation-infinite}) are of exponential growth.

In~\cite{FST2}, we classify skew-symmetrizable mutation-finite cluster algebras. The geometric meaning of this classification is clarified in~\cite{FST3}: all but seven finite mutation classes of skew-symmetrizable (non-skew-symmetric) matrices can be obtained via signed adjacency matrices of ideal triangulations of orbifolds. In the same paper~\cite{FST3} we show that the exchange graph of every cluster algebra originating from an orbifold is quasi-isometric to an exchange graph of a cluster algebra from a certain surface. In this way we compute the growth rate of all cluster algebras from orbifolds.

%Cluster {\it seed} is the union of a cluster and the corresponding exchange matrix.

In~\cite{FZ2}, Fomin and Zelevinsky classified all finite cluster algebras, i.e., cluster algebras with finitely many clusters. Their ground-breaking result states that any finite cluster algebra corresponds to one of the finite root systems. More precisely, a ``symmetrization'' of some of the exchange matrices in the corresponding mutation class is a Cartan matrix of the corresponding root system. This observation justifies the following terminology. We say that a cluster algebra is {\it of finite} (or {\it affine}) type if a certain sign symmetric version of one of the  exchange matrices in the corresponding mutation class is the Cartan matrix of the root system.

Now we are ready to formulate the main result of the current paper. For simplicity reasons we state our result in terms of {\it diagrams} (see Section~\ref{diagrams}) rather than in terms of matrices.

\begin{theorem}\label{thm-main}
A cluster algebra $\A$ has polynomial growth if one of the following holds
\begin{enumerate}
\item $\A$ has rank $2$ (finite or linear growth);
\item $\A$ is of one of the following types:
\begin{enumerate}
\item finite type $A_n$, $B_n$, $C_n$, $D_n$, $E_6$, $E_7$, $E_8$, $F_4$, or $G_2$, then $\A$ is finite;
\item affine type $\t A_n$, $\t B_n$, $\t C_n$, or $\t D_n$, then $\A$ has linear growth;
\end{enumerate}

\item the mutation class contains one of the following three diagrams shown in Fig.~\ref{growth-diagr}:
\begin{enumerate}
\item diagram $\Gamma(n_1, n_2),\; n_1, n_2 \in \Z_{>0}$, then $\A$ has quadratic growth;
\item diagram $\Delta(n_1, n_2),\; n_1, n_2 \in \Z_{>0}$, then $\A$ has quadratic growth;
\item  diagram $\Gamma(n_1, n_2, n_3),\; n_1, n_2, n_3 \in \Z_{>0}$, then $\A$ has cubic growth;
\end{enumerate}
\item $\A$ is of one of the following exceptional affine types:
\begin{enumerate}
\item $\t E_6$, $\t E_7$, $\t E_8$, then $\A$ is skew-symmetric of linear growth;
\item $\t G_2$, $\t F_4$, then $\A$ is skew-symmetrizable of linear growth.
\end{enumerate}
\end{enumerate}
Otherwise, $\A$ has exponential growth.
\end{theorem}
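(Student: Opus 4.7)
The plan is to assemble the theorem from previously established results handling all cases except the exceptional non-affine mutation-finite classes, which form the new contribution of this paper. First I would dispose of the classical cases. Rank $2$ algebras are handled by explicit enumeration of clusters. The finite-type classification (case (2a)) is Fomin--Zelevinsky's theorem, which yields only finitely many clusters. For affine types (2b) and the polynomial-growth cases (3), the result follows from the growth computation of Fomin--Shapiro--Thurston for cluster algebras from surfaces, combined with the quasi-isometry established in the prequel \cite{FST3} between exchange graphs of orbifold algebras and surface algebras; the three diagrams $\Gamma(n_1,n_2)$, $\Delta(n_1,n_2)$ and $\Gamma(n_1,n_2,n_3)$ arise from orbifold versions of spheres with few holes and orbifold points, to which the surface formula directly assigns polynomial growth.

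Next I would address case (4), the exceptional affine types. For $\t E_6, \t E_7, \t E_8$ one can either describe the exchange graph explicitly from the affine root system and observe that it is quasi-isometric to $\Z$, or invoke a cluster-tilting argument on a preprojective component. For $\t F_4$ and $\t G_2$, the exchange matrix admits a folding/unfolding to a simply-laced affine exchange matrix whose growth is already known to be linear, and linear growth is preserved by folding since folding induces an injection on clusters with bounded fibre.

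The remaining exponential growth statement splits into two pieces. For mutation-infinite algebras, the skew-symmetric case is contained in \cite{FST1}; the skew-symmetrizable extension reduces to this, since any mutation-infinite skew-symmetrizable diagram contains a mutation-infinite sub-diagram whose clusters inject into the ambient exchange graph. The genuinely new content concerns the exceptional non-affine mutation-finite classes: by the classifications in \cite{FST1,FST2} these form a short explicit list, including $E_6^{(1,1)}, E_7^{(1,1)}, E_8^{(1,1)}, X_6, X_7$ in the skew-symmetric case, together with the exceptional non-affine skew-symmetrizable classes of \cite{FST2} such as those arising as non-affine analogues of $F_4$ and $G_2$.

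For each exceptional class my approach would be to exhibit an embedding of an infinite binary tree into the exchange graph by constructing two mutation sequences $\mu, \nu$ applied to a distinguished seed such that the monoid they generate acts freely on the set of clusters. Concretely, one identifies two ``loops'' in diagram space whose compositions in different orders produce diagrams distinguished by a combinatorial invariant (for instance, a weighted subgraph or the multiset of edge labels at a designated vertex), guaranteeing that distinct words in $\mu$ and $\nu$ yield distinct clusters. The main obstacle will be verifying freeness on each exceptional algebra: for the small cases $X_6, X_7$ this can be done by a finite direct check, while for the families $E_k^{(1,1)}$ one can exhibit a universal local mutation pattern supported on a fixed sub-diagram, or use an unfolding argument to transport freeness from a simply-laced case to its skew-symmetrizable analogue. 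An alternative cleaner route, which I would attempt first, is to locate within each exceptional mutation class a seed whose diagram, after a suitable local mutation, contains a mutation-infinite sub-diagram; this would allow one to inject a known exponential-growth exchange graph into the ambient one and sidestep the case-by-case freeness verification altogether.
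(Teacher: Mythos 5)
Your outline of the classical cases (rank $2$, finite type, surfaces/orbifolds for (2b) and (3), categorification for $\widetilde E_6,\widetilde E_7,\widetilde E_8$, and unfolding for $\widetilde F_4,\widetilde G_2$) matches the paper's route, and the reduction of the mutation-infinite skew-symmetrizable case to known results is acceptable. The problem lies in the part that is the actual new content of the paper: exponential growth of the exceptional non-affine mutation-finite classes $X_6$, $X_7$, $E_6^{(1,1)}$, $E_7^{(1,1)}$, $E_8^{(1,1)}$, $G_2^{(*,+)}$, $G_2^{(*,*)}$, $F_4^{(*,+)}$, $F_4^{(*,*)}$. Both of your proposed mechanisms for this step fail. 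The ``cleaner route'' --- finding a seed whose diagram contains a mutation-infinite sub-diagram --- is impossible in principle: a full sub-diagram of a mutation-finite diagram is always mutation-finite, so no diagram in a finite mutation class can contain a mutation-infinite sub-diagram; there is nothing to inject. Your main route is also broken at the freeness verification: you propose to distinguish distinct words in $\mu,\nu$ by a combinatorial invariant of the resulting \emph{diagrams} (a weighted subgraph, the multiset of edge labels at a vertex). But the mutation class is finite, so any diagram-valued invariant takes only finitely many values and cannot separate the exponentially many words you need to separate. Worse, the natural candidates for $\mu,\nu$ (and the ones the paper uses) are elements of the mapping class group, i.e.\ they \emph{preserve} the initial diagram exactly, so every word in them returns the same diagram.

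What the paper does instead is to verify freeness through a finer, seed-level invariant: the tropical action of mutation sequences on the space of $g$-vectors. For each exceptional type it exhibits two diagram-preserving mutation sequences ${\bf a},{\bf b}$, computes their piecewise-linear action on $g$-vectors explicitly (each acts, near a suitable ray, as a unipotent linear map with a $2\times 2$ Jordan block), constructs disjoint cones $X_{\bf a},X_{\bf b}$ attracted to the respective eigenrays, and applies the ping-pong lemma to conclude that $\langle{\bf a}^N\rangle\ast\langle{\bf b}^N\rangle$ is a free group inside the mapping class group, whence exponential growth. If you want to salvage your argument you need some such cluster-level (not diagram-level) invariant; the $g$-vector dynamics is the device that makes the case-by-case check finite. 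Separately, you omit the final step showing that the growth rate is independent of the choice of coefficients (the paper's Section on coefficients, resting on the known cases of the Fomin--Zelevinsky exchange-graph conjecture); without it the theorem is only established in the coefficient-free setting.
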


\begin{figure}[!h]
\begin{center}
\begin{tabular}{rl}
 $\Gamma(n_1,n_2)$&
\psfrag{a1}{\scriptsize $a_1$}
\psfrag{a2}{\scriptsize $a_2$}
\psfrag{an1-}{\scriptsize $a_{n_1-1}$}
\psfrag{an1}{\scriptsize $a_{n_1}$}
\psfrag{b1}{\scriptsize $b_1$}
\psfrag{b2}{\scriptsize $b_2$}
\psfrag{bn2+}{\scriptsize $b_{n_2+1}$}
\psfrag{bn2}{\scriptsize $b_{n_2}$}
\psfrag{b0}{\scriptsize $b_0$}
\psfrag{b0'}{\scriptsize $b_0'$}
\psfrag{c1}{\scriptsize $c_1$}
\psfrag{cn3-}{\scriptsize $c_{n_3-1}$}
\psfrag{cn3}{\scriptsize $c_{n_3}$}
\psfrag{dots}{\scriptsize $\dots$}
\psfrag{4}{\small $4$}
\raisebox{-18pt}{
\epsfig{file=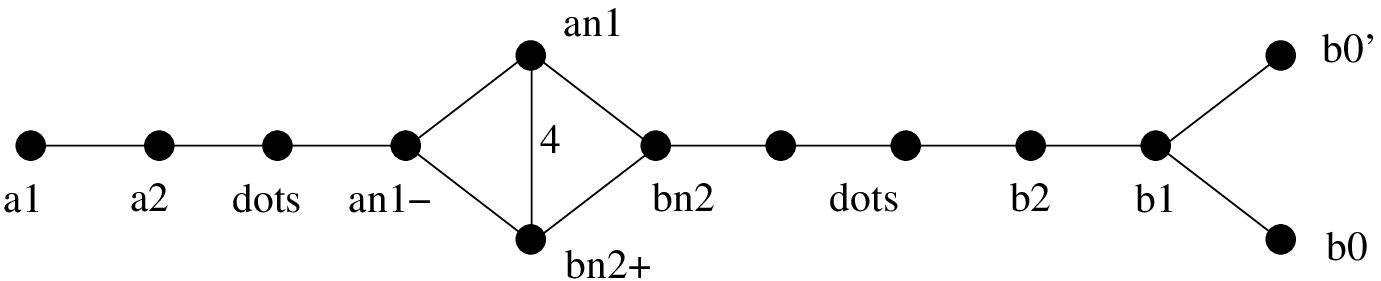,width=0.716\linewidth}
}
\\
 $\Gamma(n_1,n_2,n_3)$&
\psfrag{a1}{\scriptsize $a_1$}
\psfrag{a2}{\scriptsize $a_2$}
\psfrag{an1-}{\scriptsize $a_{n_1-1}$}
\psfrag{an1}{\scriptsize $a_{n_1}$}
\psfrag{b1}{\scriptsize $b_1$}
\psfrag{b2}{\scriptsize $b_2$}
\psfrag{bn2+}{\scriptsize $b_{n_2+1}$}
\psfrag{bn2++}{\scriptsize $b_{n_2+2}$}
\psfrag{bn2}{\scriptsize $b_{n_2}$}
\psfrag{b0}{\scriptsize $b_0$}
\psfrag{b0'}{\scriptsize $b_0'$}
\psfrag{c1}{\scriptsize $c_1$}
\psfrag{cn3-}{\scriptsize $c_{n_3-1}$}
\psfrag{cn3}{\scriptsize $c_{n_3}$}
\psfrag{dots}{\scriptsize $\dots$}
\raisebox{-18pt}{
\epsfig{file=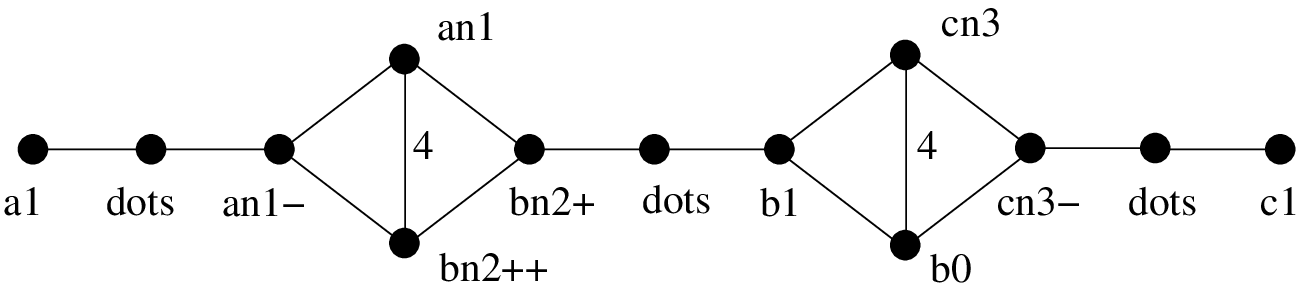,width=0.68\linewidth}
}\\
 $\Delta(n_1,n_2)$&
\psfrag{a1}{\scriptsize $a_1$}
\psfrag{a2}{\scriptsize $a_2$}
\psfrag{an1-}{\scriptsize $a_{n_1-1}$}
\psfrag{an1}{\scriptsize $a_{n_1}$}
\psfrag{b1}{\scriptsize $b_1$}
\psfrag{b2}{\scriptsize $b_2$}
\psfrag{bn2+}{\scriptsize $b_{n_2+1}$}
\psfrag{bn2}{\scriptsize $b_{n_2}$}
\psfrag{b0}{\scriptsize $b_0$}
\psfrag{b0'}{\scriptsize $b_0'$}
\psfrag{c1}{\scriptsize $c_1$}
\psfrag{cn3-}{\scriptsize $c_{n_3-1}$}
\psfrag{cn3}{\scriptsize $c_{n_3}$}
\psfrag{dots}{\scriptsize $\dots$}
\psfrag{2}{\small $2$}
\raisebox{-18pt}{
\epsfig{file=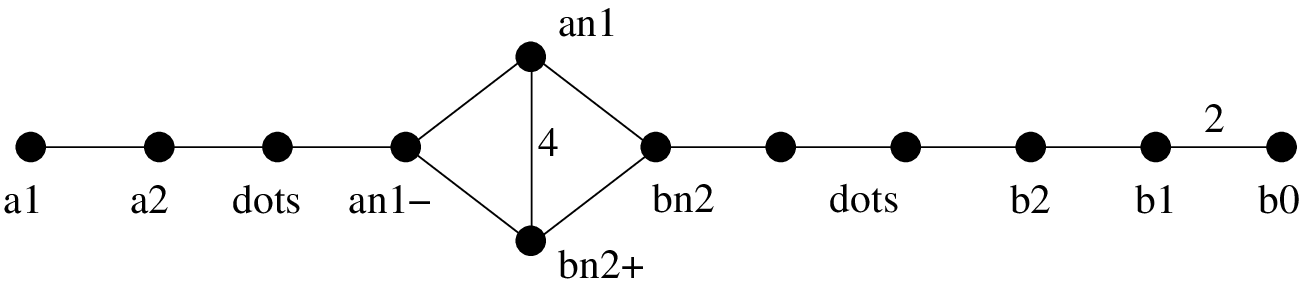,width=0.68\linewidth}
}\end{tabular}
\caption{Diagrams for the cluster algebras of quadratic and cubic growth. All triangles are oriented. Orientations of the remaining
edges are of no importance.}
\label{growth-diagr}
\end{center}
\end{figure}

\begin{remark}
Another independent proof of exponential growth for tubular cluster algebras (namely, $D_4^{(1,1)}, E_6^{(1,1)}, E_7^{(1,1)}, E_8^{(1,1)}$) is obtained recently in \cite{BGJ}.
\end{remark}

\begin{remark}
\label{non-connected}
In the paper we consider cluster algebras with connected diagrams only. Nathan Reading mentioned to us that growth of cluster algebras with non-connected diagrams can also be derived from Theorem~\ref{thm-main}. Indeed, the number of pairs of vertices in a disjoint union of several rooted graphs at total distance $N$ from the roots is a convolution of the respective functions for the connected components. In particular, this implies that the growth of a cluster algebra is polynomial if and only if for every connected component of its diagram the growth of corresponding cluster algebra is polynomial; the only cluster algebras of linear growth are affine ones with connected diagrams.     

\end{remark}

The plan of the proof is as follows. Note first that the case (1) of rank two cluster algebras is evident: the exchange graph is either a finite cycle (finite case: $A_2, B_2, G_2$) or it is an infinite path implying linear growth rate of the cluster algebra. The case (2a) is also clear.

As the next step we mention (Lemma~\ref{inf-exp}) that any mutation-infinite cluster algebra has exponential growth (see also ~\cite{FST1}). The latter implies that it remains only to determine the growth rate of cluster algebras of finite mutation type.

We collect all already known results on the growth of cluster algebras from surfaces and orbifolds in Section~\ref{sec:all}. This covers cases (2b) and (3).
The polynomial growth of skew-symmetric affine exceptional types (case 4a) is proved using the categorification approach to cluster algebras (see Section~\ref{sec:Hugh}). The case (4b) follows from (4a) via the {\it unfolding} construction recalled in Section~\ref{unf}.
Thus, we are left to prove exponential growth of all the remaining exceptional mutation-finite cluster algebras.

The \emph{mapping class group} of a cluster algebra consists of sequences of mutations that preserve the initial exchange matrix. All nontrivial elements of the mapping class group change the cluster, in particular, different elements of the mapping class group produce different clusters from the initial one. Hence, the exponential growth of the mapping class group implies the exponential growth of the cluster algebra.
%This argument was applied in~\cite{FST} to cluster algebras from surfaces to classify their growth rate. 

 To prove exponential growth of remaining exceptional cases we utilize the famous ``ping-pong lemma'' used in the proof of Tits alternative, that allows us to find a free group with two generators as a subgroup of the mapping class group of a corresponding cluster algebra.  This provides an exponential growth of the mapping class group which, in its turn, implies exponential growth of the corresponding cluster algebra. 

To apply the ping-pong lemma we consider mutations of $g$-vectors (see Section~\ref{sec:g-vectors}). The strategy consists of finding two elements of the mapping class group such that  their actions on the space of $g$-vectors satisfy conditions of the ping-pong lemma. The proof is accomplished by the detailed case-by-case analysis of $g$-vector mutations for an appropriate pair of elements of the mapping class group in each exceptional case.

Note that up to this moment we work in coefficient-free settings. In Section~\ref{coeff} we show that growth of cluster algebras does not depend on the coefficients, so the main theorem holds in full generality.

{\bf Acknowledgments}. It is a pleasure to thank the Hausdorff Research Institute for Mathematics whose hospitality the second author enjoyed in the summer of 2011, and the Banff Research Center for hosting a workshop on cluster algebras in September 2011 where the final version of the paper was prepared. We are grateful to L.~Chekhov, V.~Fock, S.~Fomin, M.~Gekhtman, Chr.~Geiss, N.~Ivanov, B.~Keller, B.~Leclerc, and A.~Vainshtein for stimulating discussions. We thank the anonymous referee for valuable comments and suggestions. We also thank N.~Reading for Remark~\ref{non-connected}. 

\section{Exchange matrices and diagrams}
\label{diagrams}

\subsection{Diagram of a skew-symmetrizable matrix}
Following~\cite{FZ2}, we encode an $n\times n$ skew-symmetrizable
integer matrix $B$ by a finite simplicial $1$-complex $S$ with
oriented weighted edges called a {\it diagram}. The weights of a
diagram are positive integers.

Vertices of $S$ are labeled by $[1,\dots,n]$. If $b_{ij}>0$, we join
vertices $i$ and $j$ by an  edge directed from $i$ to $j$ and assign
to this edge weight $-b_{ij}b_{ji}$. Not every diagram corresponds
to a skew-symmetrizable integer matrix: given a diagram $S$, there
exists a skew-symmetrizable integer matrix $B$ with diagram $S$ if
and only if a product of weights along any chordless cycle of $S$ is
a perfect square.

Distinct matrices may have the same diagram. At the same time, it is
easy to see that only finitely many matrices may correspond to the
same diagram. All weights of a diagram of a skew-symmetric matrix
are perfect squares. Conversely, if all weights of a diagram $S$ are
perfect squares, then there is a skew-symmetric matrix $B$ with
diagram $S$.

As it is shown in~\cite{FZ2}, mutations of exchange matrices induce
{\it mutations of diagrams}. If $S$ is the diagram corresponding to
matrix $B$, and $B'$ is a mutation of $B$ in direction $k$, then we
call the diagram $S'$ associated to $B'$ a {\it mutation of $S$ in
direction $k$} and denote it by $\mu_k(S)$. A mutation in direction
$k$ changes weights of diagram in the way described in Fig.~\ref{quivermut} (see e.g.~\cite{FZ2}).

\begin{figure}[!h]
\begin{center}
\psfrag{a}{\small $a$}
\psfrag{b}{\small $b$}
\psfrag{c}{\small $c$}
\psfrag{d}{\small $d$}
\psfrag{k}{\small $k$}
\psfrag{mu}{\small $\mu_k$}
\epsfig{file=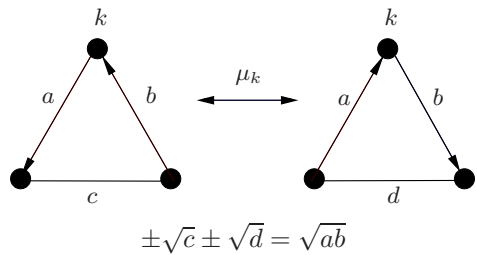,width=0.4\linewidth}\\
\medskip
$\pm\sqrt{c}\pm\sqrt{d}=\sqrt{ab}$
\caption{Mutations of diagrams. The sign before $\sqrt{c}$ (resp., $\sqrt{d}$) is positive if the three vertices form an oriented cycle, and negative otherwise. Either $c$ or $d$ may vanish. If $ab$ is equal to zero then neither the value of $c$ nor the orientation of the corresponding edge changes.}
\label{quivermut}
\end{center}
\end{figure}

For a given diagram, the notion of {\it mutation class} is
well-defined. We call a diagram {\it mutation-finite} if its
mutation class is finite.

The following criterion for a diagram to be mutation-finite is
well-known (see e.g. \cite[Theorem 2.8]{FST2}).

\begin{lemma}
\label{less3} A diagram $S$ of order at least $3$ is mutation-finite
if and only if any diagram in the mutation class of $S$ contains no
edges of weight greater than $4$.
\end{lemma}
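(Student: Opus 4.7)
The plan is to prove the two directions of the equivalence separately, reducing the nontrivial direction to the rank 3 case.

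For sufficiency ($\Leftarrow$), I would simply observe that diagrams on $n$ labeled vertices with all edge weights in $\{0,1,2,3,4\}$ form a finite set: each of the $\binom{n}{2}$ potential edges contributes at most nine data (weight $0$, or a positive weight in $\{1,2,3,4\}$ together with one of two orientations). If the entire mutation class lies inside this finite set, it is automatically finite.

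For necessity ($\Rightarrow$), I would argue by contrapositive. Suppose some diagram $S'$ in the mutation class carries an edge $(i,j)$ of weight $w>4$. The main tool is the observation that if $I\subseteq\{1,\dots,n\}$ and every mutation in a sequence is performed at a vertex of $I$, then the restriction to $I$ of the resulting diagram equals the same sequence applied to the restricted subdiagram; so for any third vertex $k$, the mutation class of the rank 3 subdiagram on $\{i,j,k\}$ embeds, via restriction, into the mutation class of $S$. It therefore suffices to find a $k$ making this rank 3 subdiagram connected (so it inherits the offending weight). Since $S$ is connected and of order $\geq 3$, such a $k$ can always be located: one traces the mutation sequence producing $S'$ back to the first step at which a weight first exceeded $4$; that step must have pivoted through a vertex adjacent to both endpoints of the new heavy edge, which is the required $k$.

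The argument is completed by the rank 3 claim: a connected rank 3 diagram with some edge of weight $>4$ is mutation-infinite. This reduces to a finite case analysis using the mutation rule $\pm\sqrt{c}\pm\sqrt{d}=\sqrt{ab}$ of Figure~\ref{quivermut}. One runs through the possible shapes (oriented triangle, non-oriented triangle, or path with the heavy edge in various positions) and exhibits in each case a mutation sequence whose repetition strictly increases the weights, producing infinitely many distinct rank 3 diagrams and hence infinitely many distinct diagrams in the mutation class of $S$. The main obstacle is this rank 3 case check; the reduction from arbitrary rank is conceptually clean once the restriction-commutes-with-mutation observation is in place.
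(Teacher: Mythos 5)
The paper does not actually prove Lemma~\ref{less3}; it is quoted as well-known with a pointer to \cite[Theorem~2.8]{FST2}, so there is no in-text argument to compare yours against. Your outline reconstructs the standard argument behind that citation, and the parts you spell out are sound: the sufficiency direction is a counting triviality; restriction to a vertex subset $I$ commutes with mutations performed at vertices of $I$ (the new weight of an edge $ij$ under $\mu_k$ depends only on data inside $\{i,j,k\}$), so a subdiagram of a mutation-finite diagram is mutation-finite; and your location of the third vertex $k$ works, because the weight of $ij$ can only change under $\mu_k$ if $k$ is joined to both $i$ and $j$, and $\mu_k$ preserves the weights of edges incident to $k$, so the order-$3$ subdiagram on $\{i,j,k\}$ of the resulting diagram is connected and still carries the heavy edge. (If the heavy edge is already present in $S$ itself, connectedness of $S$ --- which must be assumed, as the paper does throughout --- supplies $k$ directly; note the lemma is false for disconnected $S$.)

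The gap is the rank-$3$ claim, which is the entire mathematical content of the lemma and which you assert rather than prove. It is not a finite case analysis in the literal sense: the shapes are finitely many, but the weights are unbounded, so what is actually needed is a monotone quantity (the maximal edge weight, say, or the total weight) together with a rule that assigns to \emph{every} connected order-$3$ diagram having a weight greater than $4$ a mutation strictly increasing that quantity while preserving the property of having a weight greater than $4$; the choice of vertex must depend on the current diagram and its orientations, so ``repeating a fixed sequence'' is not the right mechanism (as your own $A_3$-type computations would show, a badly chosen mutation simply undoes the previous one). Extracting this monotonicity from the rule $\pm\sqrt{c}\pm\sqrt{d}=\sqrt{ab}$ of Fig.~\ref{quivermut}, with attention to which sign applies in oriented versus non-oriented triangles, is precisely what \cite[Theorem~2.8]{FST2} and the rank-$3$ classification results it relies on provide. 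Until that step is carried out or explicitly cited, the necessity direction is not established.
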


\subsection{Unfolding of a skew-symmetrizable matrix}
\label{unf}

In this section, we recall the notion of {\it unfolding} of a skew-symmetrizable matrix.
%We will use it to reduce the computations in Section~\ref{sec:exp}.

Let $B$ be an indecomposable $n\times n$ skew-symmetrizable integer matrix, and let $BD$ be a skew-symmetric matrix, where $D=(d_{i})$ is diagonal integer matrix with positive diagonal entries. Notice that for any matrix $\mu_i(B)$ the matrix $\mu_i(B)D$ will be skew-symmetric.

We use the following definition of unfolding (communicated to us by A.~Zelevinsky) (see~\cite{FST2} and~\cite{FST3} for details).

Suppose that we have chosen disjoint index sets $E_1,\dots, E_n$ with $|E_i| =d_i$. Denote $m=\sum\limits_{i=1}^n d_i$.
Suppose also that we choose a skew-symmetric integer matrix $C$ of size $m\times m$ with rows and columns indexed by the union of all $E_i$, such that

(1) the sum of entries in each column of each $E_i \times E_j$ block of $C$ equals $b_{ij}$;

(2) if $b_{ij} \geq 0$ then the $E_i \times E_j$ block of $C$ has all entries non-negative.

Define a {\it composite mutation} $\h\mu_i = \prod_{\hat\imath \in E_i} \mu_{\hat\imath}$ on $C$. This mutation is well-defined, since all the mutations  $\mu_{\hat\imath}$, $\hat\imath\in E_i$, for given $i$ commute.

We say that $C$ is an {\it unfolding} for $B$ if $C$ satisfies assertions $(1)$ and $(2)$ above, and for any sequence of iterated mutations $\mu_{k_1}\dots\mu_{k_m}(B)$ the matrix $C'=\h\mu_{k_1}\dots\h\mu_{k_m}(C)$ satisfies assertions $(1)$ and $(2)$ with respect to $B'=\mu_{k_1}\dots\mu_{k_m}(B)$.

\section{Block decompositions of diagrams}
\label{blockdecomp}

In~\cite{FST}, Fomin, Shapiro and Thurston gave a combinatorial description of diagrams of signed adjacency matrices of ideal triangulations. Namely, such diagrams are {\it block-decomposable}, i.e. they are exactly those that can be glued from diagrams shown in Fig.~\ref{bloki} (called {\it blocks}) in the following way.

Call vertices marked in white {\it outlets}. A connected diagram $S$ is called {\it
block-decomposable} if it can be obtained from a collection of
blocks by identifying outlets of different blocks along some partial
matching (matching of outlets of the same block is not allowed),
where two edges with the same endpoints and opposite directions
cancel out, and two edges with the same endpoints and the same
directions form an edge of weight $4$. A non-connected diagram $S$
is called  block-decomposable either if $S$ satisfies the definition
above, or if $S$ is a disjoint union of several diagrams satisfying the definition above.
If $S$ is not block-decomposable then we call $S$ {\it non-decomposable}.
%Depending on a block, we call it {\it a block of type} $\rm{I}$,
%$\rm{II}$, $\rm{III}$, $\rm{IV}$, $\rm{V}$, or simply {\it a block
%of $n$-th type}.

\begin{figure}[!h]
\begin{center}
\psfrag{1}{${\rm{I}}$} \psfrag{2}{${\rm{II}}$}
\psfrag{3a}{${\rm{IIIa}}$} \psfrag{3b}{${\rm{IIIb}}$}
\psfrag{4}{${\rm{IV}}$} \psfrag{5}{${\rm{V}}$}
\epsfig{file=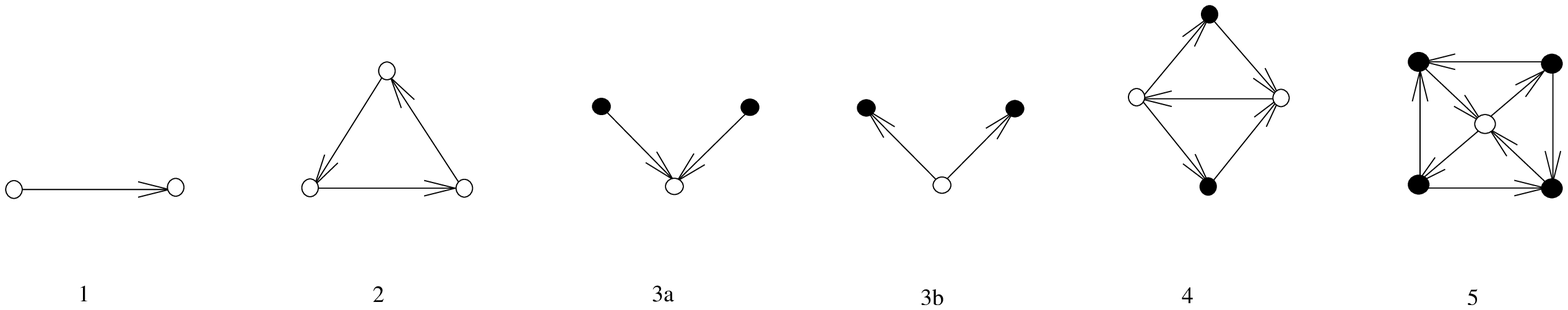,width=0.999\linewidth}
\caption{Blocks. Outlets are colored in white.}
\label{bloki}
\end{center}
\end{figure}

%\begin{remark}
As it was mentioned above, block-decomposable diagrams are in one-to-one correspondence with
adjacency matrices of arcs of ideal (tagged) triangulations of
bordered two-dimensional surfaces with marked points
(see~\cite[Section~13]{FST} for the detailed explanations).
Mutations of block-decomposable diagrams correspond to flips of
triangulations. In particular, this implies that mutation class of
any block-decomposable diagram is finite.

\medskip

It was shown in~\cite{FST2,FST3} that diagrams of signed adjacency matrices of arcs of ideal triangulations of orbifolds can be described in a similar way. For this, we need to introduce new \emph{s-blocks} shown in Fig.~\ref{s-bloki}.

\begin{figure}[!h]
\begin{center}
\psfrag{3at}{${\rm{\t{III}a}}$}
\psfrag{3bt}{${\rm{\t{III}b}}$}
\psfrag{4t}{$\t{\rm{IV}}$}
\psfrag{51t}{$\t{\rm{V}}_1$}
\psfrag{52t}{$\t{\rm{V}}_{2}$}
\psfrag{512t}{$\t{\rm{V}}_{12}$}
\psfrag{6t}{$\t{\rm{VI}}$}
\psfrag{2-}{\scriptsize $2$}
\psfrag{4}{\scriptsize $4$}
\epsfig{file=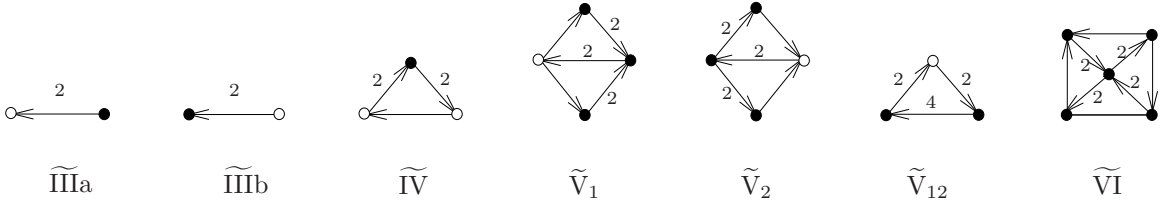,width=0.99\linewidth}
\caption{s-blocks. Outlets are colored in white.}
\label{s-bloki}
\end{center}
\end{figure}

We keep the idea of gluing. A diagram is {\it s-decomposable} if it can be
glued from blocks and s-blocks. We keep the term ``block-decomposable'' for s-decomposable diagrams corresponding to skew-symmetric matrices.

Like block-decomposable diagrams, s-decomposable diagrams are in one-to-one correspondence with
adjacency matrices of arcs of ideal (tagged) triangulations of
bordered two-dimensional orbifolds with marked points and orbifold points of degree two
(see~\cite{FST3}).
As above, mutations of s-decomposable diagrams correspond to flips of
triangulations. This implies that mutation class of
any s-decomposable diagram is also finite.

Therefore, s-decomposable diagrams form a large class of finite mutation diagrams (and therefore exchange matrices).
Moreover, in~\cite{FST2} we proved that together with diagrams of rank 2 they provide almost all diagrams of finite mutation type.

More exactly, the following theorems hold.

\begin{theorem}[Theorem 6.1~\cite{FST1}]\label{all}
A connected non-decomposable skew-symmetric mutation-finite diagram of order greater than $2$ is mutation-equivalent
to one of the eleven diagrams $E_6$, $E_7$, $E_8$, $\widetilde E_6$, $\widetilde E_7$,
$\widetilde E_8$, $X_6$, $X_7$, $E_6^{(1,1)}$, $E_7^{(1,1)}$, $E_8^{(1,1)}$ shown in Figure~\ref{allfig}.

\begin{figure}[!h]
\begin{center}
\psfrag{1}{$E_6$}
\psfrag{2}{$E_7$}
\psfrag{3}{$E_8$}
\psfrag{1_}{$\widetilde E_6$}
\psfrag{2_}{$\widetilde E_7$}
\psfrag{3_}{$\widetilde E_8$}
\psfrag{1__}{$E_6^{(1,1)}$}
\psfrag{2__}{$E_7^{(1,1)}$}
\psfrag{3__}{$E_8^{(1,1)}$}
\psfrag{4-}{$X_6$}
\psfrag{5-}{$X_7$}
\psfrag{4}{\scriptsize $4$}
\epsfig{file=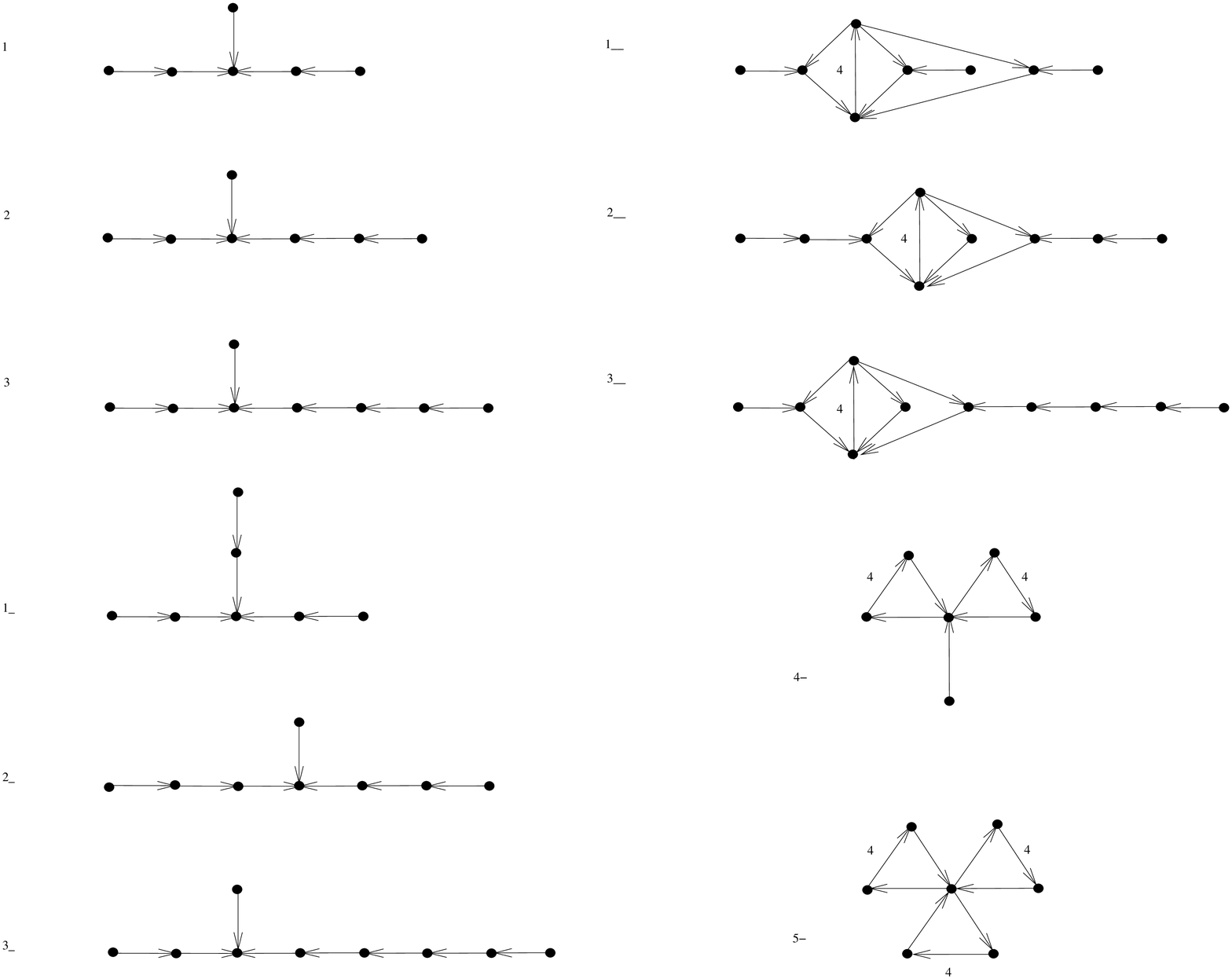,width=.99\linewidth}
\caption{Non-decomposable skew-symmetric mutation-finite diagrams of order at least $3$}
\label{allfig}
\end{center}
\end{figure}

\end{theorem}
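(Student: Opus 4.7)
The plan is a rank induction after a finite enumeration in small ranks. By Lemma~\ref{less3}, every diagram in the mutation class of a mutation-finite $S$ of order at least three has all weights at most $4$; since $S$ is skew-symmetric the weights are perfect squares, so every edge in the mutation class carries weight $1$ or $4$. A second crucial observation is that any full sub-diagram of a mutation-finite diagram is itself mutation-finite: mutation at a vertex of a subset $I$ commutes with restriction to $I$ (a direct check of the mutation rule in Fig.~\ref{quivermut}), so the mutation class of the restriction is contained in the restrictions of the mutation class of the full diagram.

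The base of the induction is a finite enumeration. For each order $n \in \{3,4,5,6,7\}$ I list all connected diagrams on $n$ vertices with edge weights in $\{1,4\}$. For each such diagram one of two things happens: either I exhibit a short mutation sequence producing an edge of weight at least $5$ (ruling it out by Lemma~\ref{less3}), or the mutation orbit is finite and I compute it explicitly by iterating the rule in Fig.~\ref{quivermut}. Among the surviving mutation-finite diagrams, the block-decomposable ones correspond to triangulated surfaces by \cite{FST}; the remainder are the non-decomposable examples. This step shows that no non-decomposable mutation-finite connected diagram of order $3$, $4$, or $5$ exists, and identifies $E_6$, $\widetilde E_6$, $X_6$, $E_7$, $\widetilde E_7$, $X_7$, $E_6^{(1,1)}$ in orders $6$ and $7$.

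For the inductive step, suppose $S$ is connected, non-decomposable, and mutation-finite, of order $n \ge 8$. Every $(n-1)$-vertex full sub-diagram of $S$ is mutation-finite and hence, by the inductive hypothesis, is either block-decomposable or mutation-equivalent to one of the lower-rank non-decomposable diagrams on the list. I enumerate, for each known non-decomposable diagram $T$ of order $n-1$ (and for a suitable set of mutation-equivalent representatives of $T$), the ways of adding a single vertex by edges of weight $1$ or $4$ to obtain a connected non-decomposable extension; mutation-finiteness of each candidate is then either confirmed by computing the full orbit, or destroyed by an explicit sequence producing an edge of weight $\geq 5$. This recovers exactly $E_8$, $\widetilde E_8$, $E_7^{(1,1)}$, $E_8^{(1,1)}$. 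To terminate the induction it suffices to check that no single-vertex extension of $E_8^{(1,1)}$ (or of any mutation-equivalent form) is mutation-finite, so no exceptional examples exist in orders $n \ge 11$.

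The main obstacle is the combinatorial explosion in the inductive step: the raw number of candidate extensions, multiplied by the number of mutation-equivalent normal forms one must inspect, grows rapidly with $n$. To tame it I would work with a small set of normal forms for each exceptional class (exploiting its symmetry group), and maintain a library of forbidden sub-configurations --- in particular, the rank-three diagrams already known to be mutation-infinite --- whose presence inside a candidate immediately kills mutation-finiteness. Even with this bookkeeping, several negative cases require non-trivial mutation sequences to expose, and computer enumeration is a natural tool both for organising the case analysis and for verifying the closed orbits in the positive cases.
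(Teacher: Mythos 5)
This statement is not proved in the present paper at all: it is quoted verbatim from~\cite{FST1} (Theorem~6.1 there), and the authors simply cite that earlier work. So the comparison has to be with the proof in~\cite{FST1}, whose overall shape --- computer enumeration in small rank followed by an induction on the order --- your proposal does resemble.

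However, your inductive step has a genuine gap. You argue: every order-$(n-1)$ full subdiagram of $S$ is mutation-finite, hence by induction either block-decomposable or mutation-equivalent to one of the known exceptional diagrams; you then enumerate one-vertex extensions of the \emph{exceptional} diagrams of order $n-1$. This covers only the case where $S$ actually contains a non-decomposable full subdiagram of order $n-1$. It says nothing about the complementary case, in which \emph{every} order-$(n-1)$ full subdiagram of $S$ is block-decomposable while $S$ itself is not. Your enumeration would never see such an $S$, and nothing in your argument rules it out, so the induction does not close: a priori there could be arbitrarily large non-decomposable mutation-finite diagrams all of whose proper full subdiagrams are decomposable. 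This missing case is precisely the hard part of the proof in~\cite{FST1}: one must show that for $n$ at least the threshold reached by the computer search, a mutation-finite connected diagram of order $n$ all of whose order-$(n-1)$ full subdiagrams are block-decomposable is itself block-decomposable. That is a nontrivial local-to-global statement about how the block decompositions of the $n$ subdiagrams (one for each deleted vertex) can be made compatible and assembled into a block decomposition of $S$; it occupies the bulk of~\cite{FST1} and has no analogue in your sketch.

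Two smaller points. First, in the extension step you would need to extend not just the standard representatives $E_8^{(1,1)}$ etc.\ but every diagram in their (large) mutation classes, or else prove a normal-form reduction; you gesture at this but it is where the computational weight actually lies. Second, your base-case enumeration through order $7$ understates the range needed: the exceptional diagrams go up to order $10$ ($E_8^{(1,1)}$), so the explicit search must reach at least that far before any induction can take over.
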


%\newpage

\begin{theorem}[Theorem 5.13~\cite{FST2}]\label{allkos}
A connected non-decomposable skew-sym\-metr\-iz\-able diagram, that is not skew-symmetric, has finite mutation class if and only if either it is of order $2$ or its diagram is mutation-equivalent to one of the seven types $\t G_2$, $F_4$, $\t F_4$, $G_2^{(*,+)}$, $G_2^{(*,*)}$, $F_4^{(*,+)}$, $F_4^{(*,*)}$ shown in Fig.~\ref{allfign}.

\begin{figure}[!h]
\begin{center}
\psfrag{2}{\scriptsize $2$}
\psfrag{2-}{\scriptsize $2$}
\psfrag{3}{\scriptsize $3$}
\psfrag{4}{\scriptsize $4$}
\psfrag{G}{$\t G_2$}
\psfrag{F}{$F_4$}
\psfrag{Ft}{$\t F_4$}
\psfrag{W}{$G_2^{(*,*)}$}
\psfrag{V}{$G_2^{(*,+)}$}
%\psfrag{Y5}{$Y_5$}
\psfrag{Y6}{$F_4^{(*,+)}$}
\psfrag{Z}{$F_4^{(*,*)}$}
\epsfig{file=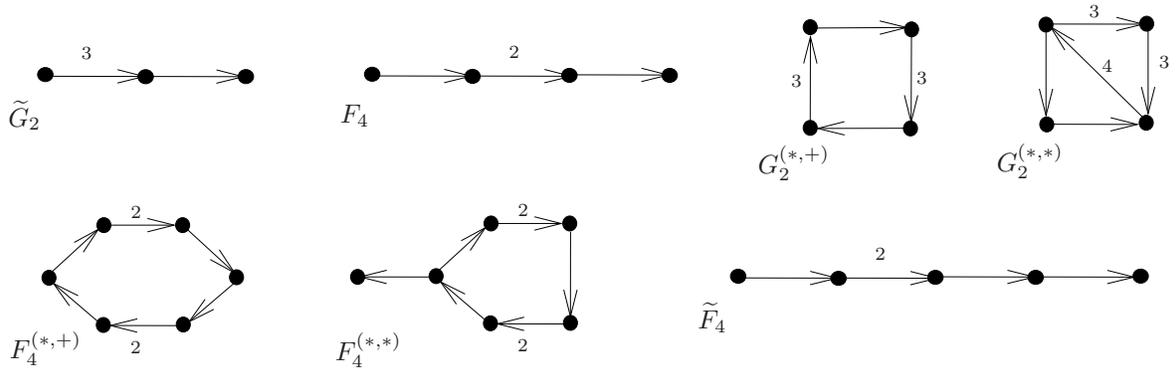,width=1.0\linewidth}
\caption{Non-decomposable mutation-finite non-skew-symmetric diagrams of order at least $3$}
\label{allfign}
\end{center}
\end{figure}

%\end{thrm}
\end{theorem}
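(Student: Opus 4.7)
The plan is to prove both directions of the equivalence. For sufficiency, I would verify directly that each of the seven diagrams $\t G_2$, $F_4$, $\t F_4$, $G_2^{(*,+)}$, $G_2^{(*,*)}$, $F_4^{(*,+)}$, $F_4^{(*,*)}$ shown in Fig.~\ref{allfign} is non-decomposable and mutation-finite. Since each has small fixed rank, one can enumerate its mutation class explicitly, confirm that no edge weight ever exceeds $4$, and so conclude mutation-finiteness via Lemma~\ref{less3}. Non-decomposability would follow by inspecting the s-blocks in Fig.~\ref{s-bloki}: each of these diagrams contains an edge of weight $3$, or a configuration of weight-$2$ edges, that cannot be realized by any legal gluing of the listed s-blocks at outlets only.

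For necessity, the main tool is Lemma~\ref{less3}: in a mutation-finite diagram every edge weight lies in $\{1,2,3,4\}$ throughout the mutation class. Since the diagram is skew-symmetrizable but not skew-symmetric, some weight must fail to be a perfect square, hence equal $2$ or $3$. The argument then proceeds by induction on the rank $n$. For $n=3$, one performs a direct enumeration: for each admissible $3$-vertex diagram (i.e.\ one whose chordless cycles carry weight products that are perfect squares) containing an edge of weight $2$ or $3$, one carries out all possible mutation sequences while tracking the resulting weights. Those orbits that never produce a weight greater than $4$ turn out to be either single s-blocks (hence s-decomposable) or $\t G_2$.

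For the inductive step $n\geq 4$, I would invoke the standard subdiagram principle: restricting a mutation-finite diagram to any subset of vertices yields a mutation-finite diagram of smaller rank. Hence every $3$-vertex subdiagram containing a non-square weight must be either s-decomposable or $\t G_2$. Careful analysis of the ways in which these local configurations can be assembled, constrained by the perfect-square condition on chordless cycles, should force the whole diagram into the short list. It is convenient here to use the unfolding construction of Section~\ref{unf}: each candidate may be obtained by folding a skew-symmetric mutation-finite diagram from Theorem~\ref{all} (e.g.\ the $E_n^{(1,1)}$ series or $\widetilde E_n$) under an appropriate symmetry, and any hypothetical extra example should unfold to a skew-symmetric mutation-finite diagram not appearing in Theorem~\ref{all}, contradicting that classification.

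The main obstacle is the combinatorial case analysis for ranks $4$ and $5$, particularly distinguishing the genuinely mutation-finite candidates $F_4^{(*,\pm)}$ and $G_2^{(*,\pm)}$ from near-miss configurations whose mutation-infiniteness only manifests after several mutations, so that a naive ``one step'' check is insufficient. A systematic strategy is to produce, for each candidate subdiagram outside the classification, a short explicit mutation sequence generating an edge of weight at least $5$, thereby terminating that branch of the enumeration; in practice this step benefits greatly from a computer-assisted exhaustion of the finite search space permitted by Lemma~\ref{less3}.
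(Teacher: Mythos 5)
First, a point of reference: the paper you are working from does not prove this statement at all --- Theorem~\ref{allkos} is imported verbatim as Theorem~5.13 of~\cite{FST2}, and the present paper only adjusts the choice of representatives in Fig.~\ref{allfign} for later computational convenience. So there is no in-paper proof to match; your proposal has to be judged against the argument of~\cite{FST2}, whose overall shape (weight bound from Lemma~\ref{less3}, the subdiagram principle, induction on the order, and a large computer-assisted enumeration in small rank) your outline does capture correctly.

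The genuine gap is in your use of unfoldings in the necessity direction. You propose that ``any hypothetical extra example should unfold to a skew-symmetric mutation-finite diagram not appearing in Theorem~\ref{all}.'' But the existence of an unfolding is not automatic: the definition in Section~\ref{unf} requires a compatible skew-symmetric matrix $C$ whose composite mutations track \emph{all} iterated mutations of $B$, and in~\cite{FST2,FST3} such unfoldings are \emph{constructed} for the already-classified types rather than shown to exist for an arbitrary mutation-finite skew-symmetrizable matrix. A hypothetical counterexample need not admit any unfolding, so no contradiction with Theorem~\ref{all} can be extracted this way; the classification in~\cite{FST2} instead rules out extra examples by direct combinatorial analysis of how the admissible order-$3$ configurations (all weights in $\{1,2,3,4\}$, square products on chordless cycles) can be assembled, which is exactly the step your outline compresses into ``careful analysis \dots should force the whole diagram into the short list.'' That assembly argument, together with the verification that the seven exceptional diagrams are genuinely non-decomposable (your appeal to ``an edge of weight $3$ or a configuration of weight-$2$ edges'' does not by itself work, since the s-blocks of Fig.~\ref{s-bloki} do contain weight-$2$ and weight-$4$ edges), is the actual mathematical content of the theorem and is left unexecuted in your proposal.
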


\begin{remark}
$\!$In Fig.\!~\ref{allfign}, we have chosen representatives from the mutation classes of non-decomposable diagrams that are slightly different than the ones from~\cite[Theorem~5.13]{FST2}. This is done for simplification of computations in Section~\ref{sec:exp}.

\end{remark}

\section{Growth of non-exceptional cluster algebras}
\label{sec:all}

As was proved in~\cite{FST1}, a mutation-infinite skew-symmetric cluster algebra has exponential growth. Very similar considerations lead to the following lemma (it can also be easily derived from the results of Seven~\cite{Se}).

\begin{lemma}
\label{inf-exp}
Any mutation-infinite skew-symmetrizable cluster algebra has exponential growth.
\end{lemma}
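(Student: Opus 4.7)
The plan is to mirror the skew-symmetric argument of \cite{FST1}: reduce to a rank-$3$ subproblem carrying a sufficiently large edge weight, and there produce exponentially many clusters by a direct mutation count.

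First I would use Lemma~\ref{less3}. Mutation-infiniteness of $B$ means that some iterate $B' = \mu_{k_s}\cdots\mu_{k_1}(B)$ has a diagram containing an edge of weight strictly greater than $4$. Since the growth rate of the exchange graph of $\A$ does not depend on the choice of initial cluster, I may replace $B$ by $B'$ and so assume from the outset that the diagram of $B$ itself contains an edge $\{i,j\}$ with $|b_{ij}b_{ji}|\geq 5$. In particular $n\geq 3$, because in rank $2$ the mutation class of any skew-symmetrizable matrix has size at most two.

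Next I would localise to a rank-$3$ witness. By Seven's local criterion~\cite{Se} for mutation-finiteness of skew-symmetrizable matrices, there exists a triple of indices $\{i,j,k\}$, possibly after some further mutations of $B$, such that the $3\times 3$ principal submatrix $B_0$ on $\{i,j,k\}$ is itself mutation-infinite. Mutations of $B$ supported on $\{i,j,k\}$ act on this $3\times 3$ block exactly as the corresponding rank-$3$ mutations on $B_0$, so by a further replacement using Lemma~\ref{less3} inside $B_0$ I may assume $B_0$ itself carries an edge of weight $\geq 5$.

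For the growth count I would work entirely inside $B_0$. The rank-$3$ diagram mutation rule of Fig.~\ref{quivermut} has the feature that, once a single edge of $B_0$ has weight $\geq 5$, a short finite case analysis on the possible sign patterns of $B_0$ produces two short composite mutation words $\sigma_1,\sigma_2$ in $\{\mu_i,\mu_j,\mu_k\}$, each of length at most some absolute $L$, such that the entries of $B_0$ grow in a Fibonacci-like fashion after any nontrivial reduced word in $\sigma_1,\sigma_2$. This forces different reduced words to yield different rank-$3$ matrices. Applying the same words to $B$, the $3\times 3$ block of the resulting matrix equals this rank-$3$ mutation of $B_0$, so distinct words give distinct full matrices and hence distinct clusters; this produces $2^{N/L}$ distinct vertices of the exchange graph of $\A$ inside the ball of radius $N$, which is exponential growth.

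The main obstacle is the third step: the construction of $\sigma_1,\sigma_2$ and the verification that different reduced words yield genuinely different matrices. In the skew-symmetric setting this is the Fibonacci-growth computation used in \cite{FST1}; in the skew-symmetrizable case the mutation rule of Fig.~\ref{quivermut} is only slightly more general, so the same dynamics should still work, but checking this requires examining the finitely many sign-and-weight configurations of a rank-$3$ skew-symmetrizable diagram with one weight $\geq 5$. This is exactly the rank-$3$ analysis carried out in \cite{Se}, which is why the authors remark that the lemma can also be derived from Seven's results.
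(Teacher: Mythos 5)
Your proposal is correct and follows essentially the same route as the paper, which itself gives no details for Lemma~\ref{inf-exp}: it simply asserts that the argument is obtained by ``very similar considerations'' to the skew-symmetric case of~\cite{FST1} and can alternatively be derived from Seven's rank-$3$ classification~\cite{Se} --- precisely the two ingredients you combine (reduce via Lemma~\ref{less3} to a diagram with an edge of weight $\geq 5$, pass to a rank-$3$ principal submatrix, and run the weight-growth/free-semigroup count there). The one step you leave unexecuted, the explicit rank-$3$ case analysis producing the two words $\sigma_1,\sigma_2$, is exactly the content the paper also delegates to~\cite{FST1} and~\cite{Se}, so your write-up is no less complete than the paper's own treatment.
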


Therefore, we are left to describe the growth of mutation-finite cluster algebras.

According to the results of~\cite{FST1} and~\cite{FST2,FST3}, almost all mutation-finite cluster algebras originate from surfaces or orbifolds.
The growth of cluster algebras from surfaces was computed in~\cite{FST} by investigating mapping class groups of surfaces. In~\cite{FST3}, we compute the growth of cluster algebras from orbifolds by making use of unfoldings and proving quasi-isometry of the corresponding exchange graphs (which is a much stronger statement than needed for growth computation), see~\cite[Section~10]{FST3}. Below we define a mapping class group of a cluster algebra, and then follow~\cite[Section~13]{FST} to present a uniform explanation for both cases. 

%% START MAPPING CLASS GROUP STARTS I

Let $\overline n=\{1,2,\dots,n\}$. We denote by $W={\mathbb Z}_2*\dots*{\mathbb Z}_2$ the free product of $n$ copies of ${\mathbb Z}_2$ with $i\in{\overline n}$ being a generator of $i$th copy of ${\mathbb Z}_2$. $W$ is the set of all words without letter repetitions in alphabet $\overline n$. 
%We will think of $W$ as a group with multiplication corresponding to concatenation of words, empty word as identity $\operatorname{Id}$ and each letter being a reflection $a^2=\operatorname{Id}, \forall a\in {\overline n}$.

A word $w=i_1\,i_2\,\dots\,i_k\in W$ can be interpreted as a sequence $\mu_w$ of mutations of cluster algebra $\A$, namely, $\mu_w=\mu_{i_k}\circ\cdots\circ\mu_{i_2}\circ\mu_{i_1}$. 

\begin{definition} We call a word $w\in W$ \emph{trivial} if $\mu_w(x_i)=x_i$ for any cluster variable $x_i$, $i\in{\overline n}$, of the initial cluster. Trivial words form a subgroup $W_e\subset W$  that we call \emph{the subgroup of trivial transformations}.
\end{definition}

\begin{definition}
A word $w\in W$ is \emph{mutationally trivial} if $\mu_w$ preserves the initial exchange matrix $B$. 
All mutationally trivial words form \emph{a subgroup of mutationally trivial transformations} denoted by 
$W_B\subset W$.
\end{definition}

%Clearly, $W_e\subset W_B$ is a normal subgroup.

\begin{lemma}
$W_e\subset W_B$ is a normal subgroup.
\end{lemma}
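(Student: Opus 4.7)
The plan is to verify the containment $W_e\subset W_B$ first, and then to establish normality by chasing a seed through the composite mutation $\mu_{vwv^{-1}}$.

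For the containment, take $w\in W_e$. The point is that the exchange matrix is recoverable from the cluster: for each $k\in\overline{n}$, the exchange relation
\[
x_k\,\mu_k(x)_k \;=\; \prod_{b_{ik}>0} x_i^{b_{ik}} + \prod_{b_{ik}<0} x_i^{-b_{ik}}
\]
encodes the $k$-th column of $B$, and since the $x_i$ are algebraically independent in the ambient field, two different matrices produce two different cluster variables $\mu_k(x)_k$. As $w\in W_e$ preserves every $x_i$ (with labels), it must also preserve the exchange matrix obtained after applying $\mu_w$, so $w\in W_B$.

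For normality, fix $v\in W_B$ and $w\in W_e$; since every generator of $W$ is an involution, $v^{-1}$ is just the reversed word, and $\mu_{vwv^{-1}}=\mu_{v^{-1}}\circ\mu_w\circ\mu_v$. The key observation I would rely on is that the composite mutation $\mu_w$, as a transformation of clusters starting from exchange matrix $B$, is encoded by a tuple of rational functions $F_w=(F_{w,1},\dots,F_{w,n})$ in $n$ variables: the sequence of intermediate exchange matrices passed through during $\mu_w$ depends only on $B$ and on the word $w$, and at each step the cluster-variable update is a rational function of the current variables with coefficients determined solely by the current matrix. Since $w\in W_e$ gives $F_w(x_1,\dots,x_n)=(x_1,\dots,x_n)$ and the $x_i$ are algebraically independent in the ambient field, each $F_{w,i}$ is literally the identity rational function, hence $F_w(y)=y$ for every $n$-tuple $y$.

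Now I would chase the initial seed $(x,B)$ through the three stages. The mutation $\mu_v$ sends $(x,B)$ to some $(x',B)$ because $v\in W_B$ preserves the matrix. Applying $\mu_w$ to $(x',B)$ then gives $(F_w(x'),\mu_w(B))=(x',B)$, by the identity-of-rational-functions step together with $\mu_w(B)=B$ (which holds because $W_e\subset W_B$). Finally $\mu_{v^{-1}}=\mu_v^{-1}$ returns $(x',B)$ to $(x,B)$, so $\mu_{vwv^{-1}}$ fixes each labeled initial cluster variable, proving $vwv^{-1}\in W_e$. The main subtlety is the passage from ``$F_w$ fixes the specific tuple $x$'' to ``$F_w$ is the identity as a rational function,'' which rests on algebraic independence of cluster variables in the ambient field of fractions, a standard feature of the Fomin--Zelevinsky setup.
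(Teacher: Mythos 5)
Your normality argument is fine and is essentially the paper's own (very terse) argument, spelled out: since $u\in W_B$ returns the exchange matrix to $B$, the word $w$ acts at the intermediate seed $(x',B)$ by the very same tuple of rational maps it performs at $(x,B)$, and a rational map that fixes an algebraically independent tuple is the identity, hence fixes $x'$ as well; conjugating therefore lands back in $W_e$. That part I would accept as written.

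The gap is in the containment $W_e\subset W_B$. The paper does not prove this from the exchange relations; it invokes the theorem of Gekhtman--Shapiro--Vainshtein \cite{GSV} that a seed of a cluster algebra is determined by its cluster, and your two-line substitute does not establish that theorem. Knowing that $w$ fixes the labelled cluster $x$ only tells you the resulting seed is $(x,\mu_w(B))$; to conclude $\mu_w(B)=B$ you argue that distinct matrices produce distinct adjacent variables $\mu_k(x)_k$, but nothing in the definition of $W_e$ forces the adjacent variables computed from the seed $(x,\mu_w(B))$ to coincide with those computed from $(x,B)$ --- that coincidence is essentially the statement being proved. To obtain it one needs, for instance, the nontrivial fact that any $n-1$ cluster variables belong to at most two clusters, so that $x\setminus\{x_k\}$ has a unique completion other than $x_k$. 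Even granting that, the exchange relation $x_k\,\mu_k(x)_k=M_1+M_2$ determines only the unordered pair of monomials $\{M_1,M_2\}$, hence the $k$-th column of $B$ only up to sign; in particular the global sign change $B\mapsto -B$ yields exactly the same set of adjacent clusters, so the cluster together with all its neighbours cannot distinguish $B$ from $-B$, and your argument as stated cannot exclude $\mu_w(B)=-B$. This is precisely why the paper leans on \cite{GSV} here rather than on the exchange relations alone; you should either cite that result or supply the missing (and substantially longer) argument.
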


\begin{proof} Note first that any word $w\in W_e$ preserves exchange matrix $B$ by \cite{GSV} and therefore
$W_e\subset W_B$.
Note also that for all $w\in W_e$, $u\in W_B$ the word $u^{-1} wu$ preserves all initial cluster variables and, hence,
belongs to $W_e$. 
\end{proof}

\begin{definition}
The quotient $\M=W_B/W_e$ is a
\emph{colored mapping class group} of cluster algebra $\A$.
\end{definition}

\begin{example} (cluster algebras of rank $2$)
\begin{enumerate}
\item The group of trivial transformations $W_e$
of the coefficient-free cluster algebra $\A$ of type $A_2$ with the initial exchange 
matrix $B=\begin{pmatrix}
0 & 1 \\
-1 & 0
\end{pmatrix}$ consists of all words $(12)^{5k}$ and $(21)^{5k}$. It
is generated by word $(12)^5$. (Note that  $(21)^5=(12)^{-5}$.)
The group $W_B$ of mutationally trivial transformations is formed by all words $(12)^\ell$ and $(21)^\ell$.
It is generated by the word $(12)$ implying that 
the colored mapping class group $\M=W_B/W_e\simeq {\mathbb Z}_5$.

\item Similarly, for cluster algebras of types $B_2$ and $C_2$ with exchange matrices $B=\begin{pmatrix}
0 & 2 \\
-1 & 0
\end{pmatrix}$ and $B=\begin{pmatrix}
0 & 1 \\
-2 & 0
\end{pmatrix}$ respectively, the colored mapping class group $\M\simeq {\mathbb Z}_6$.
For cluster algebra of type $G_2$ with exchange matrix $B=\begin{pmatrix}
0 & 3 \\
-1 & 0
\end{pmatrix}$ the colored mapping class group $\M\simeq {\mathbb Z}_8$

\item For cluster algebras of non finite type with exchange matrix
$B=\begin{pmatrix}
0 & a \\
-b & 0
\end{pmatrix}$, where $ab\ge 4$, the subgroup $W_e$ of trivial transformations is trivial while $W_B$ is still generated by $(12)$ and the mapping class group 
is an infinite cyclic group, $\M\simeq {\mathbb Z}$.
\end{enumerate}
\end{example} 

\begin{example} Markov cluster algebra. The Markov coefficient-free cluster algebra is a rank 3 cluster algebra with initial exchange matrix  $B=\begin{pmatrix}
\quad0 & \quad2 & -2\\
-2 & \quad0 & \quad2 \\
\quad2 & -2 & \quad0
\end{pmatrix}$. Any simple cluster transformation changes the sign of the exchange matrix. Therefore,
words $(12),\ (13)$, $(21),\ (23)$, $(31)$, $(32)$ generate subgroup $W_B$. Note that $(21)(12)=(13)(31)= (23)(32)=\operatorname{Id}$. Hence, $W_B$ is generated by three mutationally trivial words 
$(12)$, $(13)$, and $(23)$. Recall, that the Markov cluster algebra is a cluster algebra of triangulations of once punctured torus whose mapping class group is known to be isomorphic to $SL_2(\Z)$, and mutationally trivial words represent all the elements of the mapping class group of the torus. The word $(12)$ corresponds to 
$\alpha=\begin{pmatrix}
1 & 0 \\
2 & 1
\end{pmatrix}\in SL_2({\mathbb Z})$, the
word 
$(23)$ to 
$\beta=\begin{pmatrix}
1 & -2 \\
0 & 1
\end{pmatrix}\in SL_2({\mathbb Z})$ and the word $(13)$ to
$\gamma=\begin{pmatrix}
-1 & 2 \\
-2 & 3
\end{pmatrix}\in SL_2({\mathbb Z})$.
Note that $\alpha\beta\gamma^{-1}=-\operatorname{Id}$. It is known (see., e.g.,~\cite{I}), that elements $\alpha$, $\beta$, $-\operatorname{Id}$ generate a principle congruence subgroup $\Gamma(2)$ of $SL_2(\Z)$, consisting of matrices congruent to $\operatorname{Id}$ modulo $2$. The quotient 
$SL_2(\Z)/\Gamma(2)$ is isomorphic to the group $\Sigma_3$ of permutations of three elements. Therefore, the index  $|SL_2(\Z):\Gamma(2)|=6$. 

\end{example}

Denote by $\Sigma_n$ the symmetric group of permutations on $\overline n$.
Any element $\sigma\in\Sigma_n$ acts on any cluster of cluster algebra by a permutation of indices  
$\sigma(x_i)=x_{\sigma(i)}$. This action conjugates the exchange matrix 
by the corresponding permutation matrix $M_\sigma\in SL_n(\Z)$, i.e. $B\mapsto M_\sigma^{-1}BM_\sigma$.
We consider also the action of $\Sigma_n$ on $W$ by a permutation of the letters 
of the alphabet ${\overline n}$.

\begin{definition} 
We call the elements of the set $\widetilde W=W\times \Sigma_n$  \emph{enhanced words}.
\end{definition}

Enhanced word $w\times \sigma\in W\times \Sigma_n$ act on cluster algebra by composition
$\sigma\circ\mu_w$.

\begin{remark}
It is easy to see that $\widetilde W$ is a group. Indeed, the definition of an operation is evident.  The composition $(w_1\times \sigma_1)\circ (w_2\times\sigma_2)$ can be written again as an enhanced word $w_1\sigma_1^{-1}(w_2)\times \sigma_1\sigma_2$. In particular, $(w\times \sigma)^{-1}=\sigma(w^{-1})\times \sigma^{-1}$.
\end{remark}

\begin{definition} An enhanced word $w\times\sigma$ is \emph{trivial} if $(w\times\sigma)x_i=x_i\ \forall i\in{\overline n}$. We denote the subgroup of trivial enhanced words by $\widetilde W_e$. We also denote the subgroup of mutationally trivial enhanced words by $\widetilde W_B$, and \emph{the mapping class group of cluster algebra $\A(B)$} by $\widetilde\M=\widetilde W_B/\widetilde W_e$.
\end{definition}

\begin{example} (cluster algebras of rank 2)
\begin{enumerate}
\item case $A_2$:  The group $\widetilde W_e$ of trivial enhanced transformations 
of the coefficient-free cluster algebra $\A$ of rank $2$ with the initial exchange 
matrix $B=\begin{pmatrix}
0 & 1 \\
-1 & 0
\end{pmatrix}$ consists of enhanced words of the following four types:
$(12121)^{2k}\times \operatorname{Id}$, 
$(21212)^{2k}\times \operatorname{Id}$,
$(12121)^{2k+1}\times\sigma$,
$(21212)^{2k+1}\times \sigma$,
where $\sigma\in\Sigma_2$ is the permutation $(1\leftrightarrow 2)$.
It is generated by element $(12121)\times \sigma$.
The group $\widetilde W_B$ is generated by $(1)\times\sigma$ (note that $(2)\times\sigma=\left((1)\times\sigma\right)^{-1}$, $(21212)\times\sigma=((12121)\times\sigma)^{-1}$, and, finally,  $(12121)\times \sigma=((1)\times\sigma)^5$). Hence, $\widetilde\M\simeq {\mathbb Z}_5$.
\item cases $B_2$, $C_2$, and $G_2$: Similarly, the mapping class groups for $B_2$, $C_2$, and $G_2$ are isomorphic to ${\mathbb Z}_6,\ {\mathbb Z}_6$, and ${\mathbb Z}_8$, respectively.
\item cluster algebra of non finite type: the  mapping class group is ${\mathbb Z}$.
\end{enumerate}
\end{example}

There is a natural embedding $i:W\to \widetilde W$, $i(w)=(w\times\operatorname{Id})$.
Clearly, $i(W_e)\subset \widetilde W_e$ and $i(W_B)\subset \widetilde W_B$.
Therefore, $i$ induces a homomorphism ${\bf i}: \M\to \widetilde\M$.
\begin{lemma}
The map ${\bf i}$ is an embedding.
\end{lemma}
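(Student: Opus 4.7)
The plan is to show that the kernel of $\mathbf{i}$ is trivial, which is equivalent to proving that $W \cap \widetilde{W}_e = W_e$ under the identification $i(w) = (w \times \operatorname{Id})$. The containment $W_e \subset W \cap \widetilde{W}_e$ is given to us (it is exactly the observation $i(W_e) \subset \widetilde{W}_e$ stated just before the lemma), so the content is the reverse inclusion.

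Concretely, I would take an arbitrary $w \in W_B$ with $\mathbf{i}([w]) = [\operatorname{Id}]$ in $\widetilde{\M}$. Unwinding the definition of the quotient, this means $i(w) = w \times \operatorname{Id}$ lies in the subgroup $\widetilde{W}_e$ of trivial enhanced words, i.e.
\[
(w \times \operatorname{Id})(x_i) = x_i \qquad \text{for every } i \in \overline{n}.
\]
Now I would invoke the very definition of the action of an enhanced word: by the paragraph preceding the lemma, $w \times \sigma$ acts as $\sigma \circ \mu_w$. In particular, the enhancement by the identity permutation does nothing, so $(w \times \operatorname{Id})(x_i) = \mu_w(x_i)$. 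Substituting, $\mu_w(x_i) = x_i$ for all $i$, which is precisely the condition defining $W_e$. Hence $w \in W_e$, so $[w]$ is the identity in $\M$, as required.

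There is no real obstacle here beyond careful bookkeeping of the two levels of quotient: the argument is entirely formal once one observes that $w \times \operatorname{Id}$ and $w$ act on the initial cluster by the same sequence of mutations. Note that one does not even need to compare $W_B$ with $\widetilde{W}_B$ for the injectivity statement, since the only hypothesis used on $w$ is that $i(w) \in \widetilde{W}_e$; the assumption $w \in W_B$ is merely what lets us speak of $[w] \in \M$ in the first place.
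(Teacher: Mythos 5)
Your argument is correct and coincides with the paper's own proof: both reduce injectivity to showing that $i(w)\in\widetilde W_e$ forces $\mu_w(x_j)=x_j$ for all $j$, hence $w\in W_e$, using that $w\times\operatorname{Id}$ acts as $\mu_w$. The extra bookkeeping you supply about the two quotients is a harmless elaboration of the same one-line observation.
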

\begin{proof}
Indeed, assume that ${i}(w)\in\widetilde W_e$. Then, $w\times\operatorname{Id}\in \widetilde W_e$.
Hence, $w\times\operatorname{Id}(x_j)=x_j$ implying $\mu_w(x_j)=x_j\,\forall j$. Therefore, $w\in W_e$.
\end{proof}

\begin{remark} Evidently, ${\bf i}(\M)$ is a finite index (normal) subgroup of $\widetilde\M$. Therefore, the growth rate of $\M$ and $\widetilde\M$ is the same.
\end{remark}

\begin{example} (Markov cluster algebra) The mapping class group coincides with the mapping class group
of two-dimensional torus with one puncture which is known to be $SL_2(\Z)$. 
\end{example}

%% END OF MAPPING CLASS GROUP I

One can note that the mapping class group $\M_S$ of the bordered surface (or bordered orbifold) $S$ is a subgroup of the mapping class group  $\widetilde\M_{\A(S)}$ of the corresponding cluster algebra $\A(S)$.
Indeed, fix a triangulation $T$ of the orbifold. Any element $g$ of the mapping class group of the orbifold can be obtained by some sequence of cluster mutations $s_g(T)=\mu_{i_1}\circ\ldots\circ\mu_{i_k}$ which, however, depends on $T$. At the same time, if the triangulation $T'$ is obtained
from $T$ by a mapping class group action, then $s_g(T')=s_g(T)=\mu_{i_1}\circ\ldots\circ\mu_{i_k}$.
Therefore, if the mapping class group of the surface (orbifold) contains a free group with at least two generators then the cluster algebra has exponential growth.

%% START MAPPING CLASS GROUP II
\begin{remark}  If  the number $m$ of interior marked points on a surface (or orbifold) $S$ is greater than one or $m=1$ and the surface has a nonempty boundary, then the mapping class group $\M_S$ of the surface is a proper normal subgroup of $\widetilde\M_{\A(S)}$ and    
the quotient $\widetilde\M_{\A(S)}/\M_S\simeq {\Z}_2^m$. If $m=0$ or $m=1$ and the surface has no boundary, then $\widetilde\M_{\A(S)}\simeq \M_S$. Indeed, this follows easily from \cite{FST} for surfaces (and \cite{FST3} for orbifolds) where it was shown that for $m>1$
any tagged triangulation can be obtained from any other by a series of flips. Comparing it with the classical result that any two triangulations of the surface are connected by a sequence of flips, and a sequence of flips gives an element of the mapping class group of the surface if and only if the adjacency of the arcs of triangulations are preserved by this sequence, we see that in the first case we can obtain any tagging of marked points, which results in extra $\Z_2$ for every puncture. If $m=1$ and there is no boundary components or if $m=0$, then mutations do not change any tagging. 
\end{remark}
%% END MAPPING CLASS GROUP II

Let us call a {\it feature} of an orbifold (or surface) a hole, a puncture, or an orbifold point.

The above considerations lead to the following theorem.

\begin{theorem}[\cite{FST}, \cite{FST3}]
\label{orbifolds}
Cluster algebras corresponding to orbifolds (or surfaces) of genus $0$ with at most three features have polynomial growth. Cluster algebras corresponding to the other orbifolds (surfaces) grow exponentially.
\end{theorem}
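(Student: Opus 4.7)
The plan is to combine the mapping class group embedding $\M_S\hookrightarrow\widetilde\M_{\A(S)}$ just discussed with a dichotomy for the classical mapping class group of the underlying surface or orbifold: for ``big'' $S$ the group $\M_S$ contains a free subgroup of rank two; for ``small'' $S$ it is virtually abelian of low rank. The former immediately yields exponential growth of the exchange graph by the preceding considerations, while the latter will give a polynomial upper bound.

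For the exponential direction, I would verify that whenever $S$ has positive genus, or genus $0$ with at least four features, one can choose two disjoint essential simple closed curves $\gamma_1,\gamma_2$ on $S$, each not bounding a disk with at most one feature, and such that their complement is not a disjoint union of annular pieces. A standard ping-pong argument on the space of measured foliations (using that high powers of a Dehn twist attract neighborhoods of the corresponding projective class) then shows that sufficiently high powers of $T_{\gamma_1}$ and $T_{\gamma_2}$ generate a free group of rank two in $\M_S$. For surfaces this is classical; for orbifolds the same argument applies with hyperbolic cone metrics, or one can pull the free group back from a finite branched cover using the unfolding construction of Section~\ref{unf}. Since distinct mapping classes in $\M_S$ send the initial cluster to distinct clusters, this free subgroup produces exponentially many clusters within any ball of mutation-radius $N$.

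For the polynomial direction, the finitely many topological types of genus-$0$ surfaces and orbifolds with at most three features admit an explicit list: spheres with one, two, or three features, each feature being a hole, a puncture, or an orbifold point of order two. For each case I would compute $\M_S$ directly: it is trivial or finite for the finite-type cases ($A_n,B_n,C_n,D_n$), infinite cyclic for the annulus and for the once-punctured disks and twice-punctured disks realising $\widetilde A_n,\widetilde B_n,\widetilde C_n,\widetilde D_n$, virtually $\Z^2$ for the two three-feature families $\Gamma(n_1,n_2)$ and $\Delta(n_1,n_2)$ of Figure~\ref{growth-diagr}, and virtually $\Z^3$ for the cubic family $\Gamma(n_1,n_2,n_3)$. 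Combined with the finite-index quotient $\widetilde\M_{\A(S)}/\M_S$ recorded in the preceding remark, the growth of $\widetilde\M_{\A(S)}$ matches the polynomial rate predicted by Theorem~\ref{thm-main}.

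The main obstacle is the matching polynomial \emph{upper} bound on the number of clusters: the mapping class group argument only provides a lower bound, and a priori the exchange graph could be much larger than its $\widetilde\M$-orbit. Establishing that the exchange graph is quasi-isometric to (a finite cover of) the mapping class group is the technical heart of the result --- for surfaces this is carried out via the tagged-flip analysis of \cite{FST}, and for orbifolds it is reduced to the surface case via the quasi-isometry of exchange graphs under unfolding established in \cite[Section~10]{FST3}.
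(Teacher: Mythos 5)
Your overall strategy mirrors the paper's: Theorem~\ref{orbifolds} is quoted from \cite{FST} and \cite{FST3}, and the paper's own ``proof'' consists of the preceding discussion --- the embedding $\M_S\hookrightarrow\widetilde\M_{\A(S)}$, the observation that a rank-two free subgroup of $\M_S$ forces exponential growth of the exchange graph, and a deferral of the polynomial upper bounds to the flip/quasi-isometry analysis of \cite[Section~13]{FST} and \cite[Section~10]{FST3}. You correctly identify that the free-subgroup argument only gives a lower bound and that the matching polynomial upper bound is the technical content living in the cited papers, which is exactly how the paper frames it.

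There is, however, a concrete error in your exponential direction. You propose to take two \emph{disjoint} essential simple closed curves $\gamma_1,\gamma_2$ and run ping-pong on measured foliations with high powers of $T_{\gamma_1}$ and $T_{\gamma_2}$. Dehn twists about disjoint curves commute, so $\langle T_{\gamma_1}^N, T_{\gamma_2}^N\rangle$ is abelian and can never be free of rank two. The attracting-neighborhood dynamics you invoke also fails in this situation: $T_{\gamma_1}^n([\mu])$ converges to $[\gamma_1]$ only when $i(\mu,\gamma_1)>0$, and since $i(\gamma_1,\gamma_2)=0$ the twist $T_{\gamma_1}$ fixes $[\gamma_2]$ outright, so the required nesting of ping-pong sets cannot hold. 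The correct hypothesis is the opposite of disjointness: one needs $i(\gamma_1,\gamma_2)\geq 2$ (equivalently, by Thurston's construction / Ishida's theorem, curves that intersect sufficiently, e.g.\ a filling pair), and the topological condition ``positive genus, or genus $0$ with at least four features'' is what guarantees such a pair exists. With that correction the rest of your argument --- pulling the free subgroup back through the unfolding for orbifolds, and the observation that distinct mapping classes yield distinct clusters --- goes through and matches the paper's reasoning.
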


\noindent
Rephrasing this result in terms of diagrams, we obtain the following theorem in~\cite{FST3}.

\begin{theorem}\label{thm:OrbifoldFiniteMutation}
Let $\A$ be a cluster algebra with an $s$-decomposable exchange
matrix $B$. Then $\A$ has polynomial growth if it
corresponds to one of the following diagrams:
\begin{itemize}
\item finite type $A_n$, $B_n$, $C_n$, or $D_n$ (finite);
\item affine type $\t A_n$, $\t B_n$, $\t C_n$, or $\t D_n$  (linear growth);
\item diagram $\Gamma(n_1, n_2) (n_1, n_2 \in \Z_{>0})$ shown in Fig~\ref{growth-diagr}
(quadratic growth);
\item diagram $\Delta(n_1, n_2) (n_1, n_2 \in \Z_{>0})$ shown in Fig.~\ref{growth-diagr}
(quadratic growth);
\item  diagram $\Gamma(n_1, n_2, n_3) (n_1, n_2, n_3 \in \Z_{>0})$ shown in Fig.~\ref{growth-diagr}
(cubic growth).
\end{itemize}
Otherwise $\A$ has exponential growth.
\end{theorem}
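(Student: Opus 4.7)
The plan is to combine two ingredients: the one-to-one correspondence between $s$-decomposable diagrams and tagged triangulations of bordered $2$-dimensional orbifolds (with marked points, punctures, and order-$2$ orbifold points) established in~\cite{FST,FST3}, and Theorem~\ref{orbifolds}, which characterizes the polynomially growing cluster algebras arising from such orbifolds as precisely those of genus $0$ with at most three features. Under this correspondence every $s$-decomposable exchange matrix $B$ determines (up to tagging) a bordered orbifold $S(B)$ together with a triangulation, and mutations of the diagram correspond to flips of this triangulation; hence the growth rate of $\A(B)$ depends only on the topological type of $S(B)$, and the theorem becomes a translation of Theorem~\ref{orbifolds} into diagrammatic language.

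First I would verify the polynomial-growth direction by exhibiting, for each family in the statement, a concrete orbifold of genus $0$ with at most three features whose triangulation has the given diagram. The types $A_n$, $B_n$, $C_n$, $D_n$ are realized by a disc with at most two features (marked boundary points plus at most one interior orbifold point or puncture); the affine types $\t A_n$, $\t B_n$, $\t C_n$, $\t D_n$ are realized by annuli or discs with two interior features (punctures or orbifold points); the families $\Gamma(n_1,n_2)$ and $\Delta(n_1,n_2)$ arise from genus-$0$ orbifolds with three features of mixed type, with the parameters $n_1,n_2$ recording the distribution of marked points on the boundary components in patterns dictated by the block decomposition; and $\Gamma(n_1,n_2,n_3)$ arises from a sphere with three boundary components (a pair of pants) with the prescribed numbers of marked points on each boundary circle. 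In each case the identification is read off directly from the block decomposition, by recognizing the gluings of blocks and s-blocks as the standard combinatorial triangulations of these orbifolds.

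Conversely I would enumerate all bordered orbifolds of genus $0$ with at most three features and check that, up to flips, their triangulations produce exactly the diagrams listed above. This amounts to running through the finite list of topological types (disc with at most two features, annulus with zero or one interior feature, sphere with three features) and, for each, listing the combinatorial types of triangulations up to mutation-equivalence. Any $s$-decomposable diagram not appearing in the stated list must therefore correspond to an orbifold that either has genus $\geq 1$ or has at least four features, and so grows exponentially by Theorem~\ref{orbifolds}.

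The main obstacle is the combinatorial bookkeeping in this converse step: one must ensure that no diagram in the polynomial-growth list corresponds secretly to a higher-genus or many-featured orbifold, and conversely that every genus-$0$ orbifold with at most three features yields a diagram from one of the stated families. Particular care is needed at the borderlines between punctures and degree-two orbifold points, in handling the tagging ambiguity when a puncture is present, and in ensuring that the parameters $n_1,n_2,n_3$ faithfully record the number of marked points on each boundary component. Once these identifications are secured, the theorem is an immediate consequence of Theorem~\ref{orbifolds}.
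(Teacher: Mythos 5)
Your proposal follows essentially the same route as the paper: the paper derives this theorem purely as a rephrasing of Theorem~\ref{orbifolds} via the one-to-one correspondence between $s$-decomposable diagrams and tagged triangulations of bordered orbifolds, deferring the combinatorial bookkeeping (matching each diagram family to a genus-$0$ orbifold with at most three features and the converse enumeration) to~\cite{FST3}, which is exactly the work you outline. Your plan is correct and matches the intended argument.
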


\section{Exceptional cluster algebras of exponential growth}\label{sec:exp}
We are left with a short list of exceptional algebras. This section is devoted to the proof of the following theorem.

\begin{theorem}
\label{except_growth}
Cluster algebras with diagrams of types $X_6$, $X_7$, $E_6^{(1,1)}$, $E_7^{(1,1)}$, $E_8^{(1,1)}$,
$G_2^{(*,+)}$, $G_2^{(*,*)}$, $F_4^{(*,+)}$, and $F_4^{(*,*)}$ all have exponential growth.
\end{theorem}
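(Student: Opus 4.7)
The plan is, for each of the nine exceptional diagrams, to produce a free subgroup of rank two inside the mapping class group $\widetilde{\M}$. Since distinct elements of $\widetilde{\M}$ yield distinct clusters from the initial one, exponential growth of $\widetilde{\M}$ will imply exponential growth of $\A$. The tool is the classical ping-pong lemma applied to the action of $\widetilde{\M}$ on $g$-vectors: for each diagram I would look for two enhanced words $w_1, w_2 \in \widetilde{W}_B$ together with two disjoint open subsets $U_1, U_2$ of the space of rays in $\R^n$ such that $\mu_{w_i}^k(U_{3-i}) \subset U_i$ for every nonzero integer $k$. The ping-pong lemma then forces $\langle w_1, w_2\rangle$ to be free of rank two, and hence of exponential growth.

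For the five skew-symmetric cases $X_6$, $X_7$, $E_6^{(1,1)}$, $E_7^{(1,1)}$, $E_8^{(1,1)}$ I would proceed as follows. First, exploit the visible symmetries of the representatives in Figure~\ref{allfig} to produce short candidate enhanced words $w_i \in \widetilde{W}_B$: compositions of mutations that return the diagram to itself up to a relabelling of indices. Next, compute the piecewise-linear action of each candidate on $\R^n$ via the Fomin--Zelevinsky $g$-vector mutation rule from Section~\ref{sec:g-vectors}. In each case I would aim for two loops $w_1, w_2$ whose actions are hyperbolic, with dominant real eigenvalues and four pairwise distinct attracting/repelling rays; small open cones around these rays then serve as the ping-pong domains $U_1, U_2$. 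For the four skew-symmetrizable diagrams $G_2^{(*,+)}, G_2^{(*,*)}, F_4^{(*,+)}, F_4^{(*,*)}$ the same $g$-vector analysis applies directly, since the mutation formula for $g$-vectors is valid in the skew-symmetrizable setting; alternatively, one may use the unfolding construction of Section~\ref{unf} to lift a candidate pair of loops to composite mutations in one of the skew-symmetric diagrams already treated and pull freeness back down, using the fact that any relation downstairs would lift to a relation in the unfolded cluster algebra.

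The main obstacle is the explicit case-by-case construction. There is no uniform choice of $w_1, w_2$ that works across all nine diagrams, so each case requires its own search for short mutationally trivial words with hyperbolic $g$-vector action and disjoint fixed rays. Because the $g$-vector mutation rule is only piecewise linear, the ping-pong verification is not a single matrix calculation: one has to control which linear piece is active on each iterate of $U_i$, or equivalently keep track of the tropical sign-changes throughout the iteration. This chamber-by-chamber bookkeeping, repeated for each of the five skew-symmetric exceptional diagrams, constitutes the bulk of the proof; the skew-symmetrizable cases then follow with relatively little additional effort.
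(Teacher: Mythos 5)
Your overall architecture --- find two diagram-preserving mutation loops, act on $g$-vectors, apply ping-pong to get a free rank-two subgroup of the mapping class group, and conclude exponential growth --- is exactly the paper's strategy, including the reduction of $X_7$ to a subdiagram/subgroup argument and the use of unfoldings to connect the $G_2$/$F_4$ cases to the $E^{(1,1)}$ cases. However, there is one concrete step in your plan that would fail as described: you propose to look for loops whose $g$-vector action is \emph{hyperbolic}, with dominant real eigenvalues and four pairwise distinct attracting/repelling rays, and to take small cones around those rays as ping-pong domains. In every case the paper computes, the natural short loops are \emph{unipotent}: on the relevant linearity chamber each of $\mathbf{a}$, $\mathbf{b}$ is conjugate to a direct sum containing a $2\times 2$ Jordan block with eigenvalue $1$, so that $\mathbf{a}^r(v_z)=v_z+r\,\kappa_{\mathbf a}(v_z)\,v_{\mathbf a}$, and the forward and backward iterates of a generic point converge to the \emph{same} ray $v_{\mathbf a}/|v_{\mathbf a}|$. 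There are no four distinct rays, and the classical north--south ping-pong you envision does not apply. The paper's fix is to use the modified ping-pong lemma (Lemma~\ref{lem:ping-pong}, with sets rather than single attracting neighbourhoods): the set $X_{\mathbf a}$ is the union of two pieces $X^{\pm}_{\mathbf a}(\epsilon)$ of a single cone around $v_{\mathbf a}$, distinguished by the sign of the linear functional $\kappa_{\mathbf a}$, and invariance of $X^{+}_{\mathbf a}$ under $\mathbf a$ and of $X^{-}_{\mathbf a}$ under $\mathbf a^{-1}$ is what makes the argument close. This parabolic setup is also what keeps the chamber-bookkeeping you worry about manageable: all iterates stay in one linearity chamber around an integral vector, whereas a genuinely hyperbolic element (if one exists among longer words) would have an irrational attracting direction and its orbit would cross many chambers.

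Two smaller points. First, your blanket assertion that the $g$-vector mutation rule "is valid in the skew-symmetrizable setting" is not something you can cite: at the level of generality of the paper it is Conjecture~7.12 of Fomin--Zelevinsky, proved only for skew-symmetric matrices (Derksen--Weyman--Zelevinsky); the paper justifies its use for $G_2^{(*,\cdot)}$ and $F_4^{(*,\cdot)}$ precisely because those matrices admit skew-symmetric unfoldings. Second, your alternative of proving the skew-symmetric $E^{(1,1)}$ cases first and "pulling freeness back down" through the unfolding is logically sound but inverts the economy of the paper: since the mapping class group of the unfolded algebra contains that of the folded one, the paper does the low-rank ($n=4$ or $6$) skew-symmetrizable computations and gets $E_6^{(1,1)}$, $E_7^{(1,1)}$, $E_8^{(1,1)}$ for free, rather than carrying out rank $8$--$10$ computations directly.
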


The remaining algebras (of affine types~$\t G_2$, $\t F_4$, $\t E_6$, $\t E_7$, $\t E_8$) are treated in the next section.

\subsection{Ping-pong lemma}\label{sec:ping-pong}
We consider subgroups of the mapping class group $G$ of the corresponding cluster algebra, or the fundamental group of the groupoid of cluster mutations. The elements of the mapping class group are formed by sequences of mutations preserving the chosen initial diagram.

%To prove that the cluster algebra has exponential growth it is sufficient to show that its mapping class group contains as a subgroup a free group with at least two generators.

For each exceptional cluster algebra we will find a free subgroup with at least two generators of the mapping class group of the corresponding cluster algebra. Since  the free group with two generators grows exponentially, this implies
an exponential growth of the mapping class group. Different elements of the mapping class group produce different clusters from the initial one, so exponential growth of the mapping class group implies exponential growth of the cluster algebra.

The proof is based on a case-by-case study of the cluster algebras in
question. The main tool is the famous \emph{ping-pong lemma}.

%=============================

%copy from: {\tt http://en.wikipedia.org/wiki/Ping-pong\_lemma}

 The ping-pong lemma was a key tool used by Jacques Tits in his 1972 paper~\cite{T} containing the proof of Tits alternative.
Modern versions of the ping-pong lemma can be found in many books, e.g.~\cite{LS}
and others.
We will use the following modification of classical ping-pong lemma~\cite{OS}.

\begin{lemma}\label{lem:ping-pong}
Let $G$ be a group acting on a set $X$ and let $H_1, H_2,\ldots, H_k$
be nontrivial subgroups of $G$ where $k\ge 2$, such that at least one of these subgroups has order greater than $2$. Suppose there exist disjoint nonempty subsets $X_1, X_2,\ldots, X_k$ of $X$ such that the following holds:

For any $i\ne j$ and for any $h\in H_i$, $h\ne 1$ we have $h(X_j)\subset X_i$.

\noindent
Then
 $\langle H_1,\dots, H_k\rangle=H_1\ast\dots \ast H_k$.
\end{lemma}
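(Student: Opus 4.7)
The plan is to use the standard characterization of the free product: to show $\langle H_1,\ldots,H_k\rangle \cong H_1 \ast \cdots \ast H_k$, it suffices to verify that every nontrivial reduced word $w = g_1 g_2 \cdots g_n$, with $g_j \in H_{i_j}\setminus\{1\}$ and consecutive indices distinct ($i_j \neq i_{j+1}$), acts as a non-identity transformation on $X$. I will track the orbit of a carefully chosen basepoint $x \in X_\ell$ under the successive prefixes $g_n$, $g_{n-1}g_n$, $\ldots$, $w$, using the ping-pong hypothesis $h(X_j)\subset X_i$ for $h \in H_i\setminus\{1\}$ and $i\neq j$.

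First I treat the case where the outer letters lie in the same subgroup, $i_1 = i_n$. Pick any $\ell \neq i_1$ (possible since $k\geq 2$) and any $x \in X_\ell$. Since $i_n \neq \ell$, the hypothesis gives $g_n\cdot x \in X_{i_n}$; since $i_{n-1}\neq i_n$, we then have $g_{n-1}g_n\cdot x \in X_{i_{n-1}}$, and by induction $w\cdot x \in X_{i_1}$. Disjointness of $X_{i_1}$ and $X_\ell$ forces $w\cdot x \neq x$, so $w$ is not the identity.

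For the remaining case $i_1 \neq i_n$, I reduce to the previous case by conjugation. Let $m$ be an index with $|H_m| > 2$, which exists by hypothesis. I will choose $h \in H_m \setminus\{1\}$ so that $h w h^{-1}$, after absorbing any possible cancellations at the ends, is again a reduced word whose first and last letters lie in the same subgroup. A short case analysis on whether $m$ equals $i_1$, $i_n$, both, or neither handles the possible collapses: when $m \in \{i_1, i_n\}$ one needs $h$ to avoid at most two ``bad'' values (namely $1$ and $g_1^{-1}$, or $1$ and $g_n$) so that $hg_1$ and $g_n h^{-1}$ remain nontrivial, which is possible precisely because $|H_m|>2$; when $m \notin \{i_1,i_n\}$ no cancellation occurs and $hwh^{-1}$ already has both outer letters in $H_m$. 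Applying the previous paragraph to this conjugate gives $hwh^{-1}\neq 1$, hence $w\neq 1$.

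The main obstacle is the bookkeeping in the conjugation step when $k=2$: then the only available indices are $i_1$ and $i_n$, so the conjugator must be drawn from one of the subgroups already appearing at the endpoints of $w$, and the strict inequality $|H_m|>2$ is exactly what allows one to avoid the bad value while keeping the conjugate reduced. I would make this case analysis explicit (checking that in each configuration the two bad values leave room in $H_m$) to complete the proof.
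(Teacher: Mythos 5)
The paper itself does not prove this lemma: it is quoted as a known modification of the classical ping-pong lemma, with a citation to \cite{OS} (cf.\ also \cite{LS}), so there is no in-paper argument to compare against. Your proof is the standard one for this statement, and it is correct and complete in outline. The core case $i_1=i_n$ is handled exactly right: for a reduced word $w=g_1\cdots g_n$ with $g_j\in H_{i_j}\setminus\{1\}$ and $i_j\neq i_{j+1}$, choosing $\ell\neq i_1$ and $x\in X_\ell$ (possible since the $X_i$ are nonempty and $k\geq 2$) and iterating the hypothesis gives $w\cdot x\in X_{i_1}$, which is disjoint from $X_\ell\ni x$, so $w\neq 1$ in $G$; note this also covers $n=1$. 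Your conjugation step for $i_1\neq i_n$ correctly isolates the one place where the hypothesis that some $|H_m|>2$ is used: when $m\in\{i_1,i_n\}$ the conjugator $h$ must avoid exactly two bad elements ($1$ and $g_1^{-1}$, respectively $1$ and $g_n$), which requires a third element of $H_m$; and the reducedness of $hwh^{-1}$ at the junctions ($i_n\neq m$ when $m=i_1$, and $m\neq i_1$ when $m=i_n$) holds precisely because $i_1\neq i_n$, so the reduction to the first case is sound. Two cosmetic points: in this case the alternative ``$m$ equals both'' in your case analysis is vacuous, and your opening reduction --- that the conclusion $\langle H_1,\dots,H_k\rangle=H_1\ast\dots\ast H_k$ is equivalent to every nonempty reduced word acting nontrivially --- is the correct characterization (it also shows, as a byproduct, that the hypotheses force $H_i\cap H_j=\{1\}$ for $i\neq j$). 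Making the case analysis explicit, as you propose, yields a complete self-contained proof of the lemma the paper imports from the literature.
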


\begin{corollary}\label{cor:ping-pong}
With the assumptions of Lemma~\ref{lem:ping-pong}, if we further
assume that all $H_i$ are infinite cyclic groups then
 $\langle H_1,\dots, H_k\rangle$ is a free group with $k$ generators.
\end{corollary}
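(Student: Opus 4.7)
The plan is essentially to invoke Lemma~\ref{lem:ping-pong} and then reinterpret its conclusion under the additional cyclicity assumption. Since each $H_i$ is assumed infinite cyclic, the hypothesis ``at least one of these subgroups has order greater than $2$'' of Lemma~\ref{lem:ping-pong} is automatic (indeed every $H_i$ has infinite order), and the disjointness and invariance conditions on the sets $X_i$ are inherited verbatim. Lemma~\ref{lem:ping-pong} therefore yields
$$\langle H_1,\dots,H_k\rangle \;\cong\; H_1 \ast H_2 \ast \cdots \ast H_k.$$

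The second step is to identify this free product with the free group $F_k$ on $k$ generators. Each $H_i$ is isomorphic to $\Z$, say with generator $g_i \in G$, so the above free product is a free product of $k$ copies of $\Z$. By the standard presentation of a free product, a group generated by $g_1, \dots, g_k$ subject to no relations is precisely $F_k$; equivalently, one may check directly via the universal property that $\Z \ast \cdots \ast \Z \cong F_k$. Combining with the previous display, $\langle g_1, \dots, g_k\rangle$ is free of rank $k$, which is the desired conclusion.

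No substantial obstacle is expected here: the corollary is a direct specialization of Lemma~\ref{lem:ping-pong} together with the elementary identification of a free product of infinite cyclic groups with a free group of the same rank. The only point meriting a brief explicit remark is the verification that the order-greater-than-$2$ hypothesis of Lemma~\ref{lem:ping-pong} holds automatically when every $H_i$ is infinite cyclic.
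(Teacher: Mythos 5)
Your proposal is correct and follows exactly the route the paper intends (the paper states this corollary without proof, treating it as the immediate specialization of Lemma~\ref{lem:ping-pong}): verify the order hypothesis is automatic for infinite cyclic groups, apply the lemma to get the free product decomposition, and identify a free product of $k$ copies of $\Z$ with the free group on $k$ generators. Nothing is missing.
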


%==============================

To make the analysis of the mapping class group simpler we will use a tropical degeneration of cluster mutations.
Namely, we consider the piecewise linear action of cluster mutations on the space of $g$-vectors.

\subsection{Mutation of $g$-vectors}\label{sec:g-vectors}

In this section we recall the definition of $g$-vectors.

Denote by  ${\mathbb T}_n$ the $n$-regular tree of clusters of the cluster algebra $\A$ of rank $n$, and let $t_0\in {\mathbb T}_n$. Denote by $B$ the exchange matrix at $t_0$.

\begin{proposition}[\cite{FZ4}, Proposition 3.13, Corollary 6.3]
Every pair $(B; t_0)$ gives rise
to a family of polynomials $F_{j;t} = F^{B;t_0}_{j;t}\in Z[u_1, . . . , u_n]$
and two families of integer
vectors % $c_{j;t} = c^{B;t_0}_{j;t} = (c_{1j;t}, \ldots , c_{nj;t}) \in {\mathbb Z}^n$ and
$g_{j;t} = g^{B;t_0}_{j;t} = (g_{1j;t}, \ldots , g_{nj;t}) \in {\mathbb Z}^n$
(where $j \in\N$ and $t \in {\mathbb T}_n)$ with the following properties:
\begin{enumerate}
\item Each $F_{j;t}$ is not divisible by any $u_i$, and can be expressed as a ratio
of two polynomials in $u_1, \ldots , u_n$ with positive integer coefficients,
thus can be evaluated
in every semifield $\mathbb P$.
%\item For any $j$ and $t$, we have
%\begin{equation}
%y_{j;t} =
%y^{c_{1j;t}}_1\cdot\ldots\cdot y^{c_{nj;t}}_n \prod_i F_{i;t}|_{\mathbb P} (y_1, \ldots , y_n)^{b_{ij;t}}
%\end{equation}
\item For any $j$ and $t$, we have
\begin{equation}
x_{j;t} = x^{g_{1j;t}}_1 \cdot\ldots\cdot x^{g_{nj;t}}_n
\frac{F_{j;t}|_\F(\hat y_1, \ldots , \hat y_n)}{F_{j;t}|_{\mathbb P} (y_1, \ldots , y_n)},
\end{equation}
where the elements $\hat y_j$ are given by
$\hat y_j = y_j \prod_i x^{b_{ij}}_i$.
\end{enumerate}
\end{proposition}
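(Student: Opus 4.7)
The plan is to first establish the statement in the universal case of \emph{principal coefficients} (i.e.\ the cluster algebra $\A_\bullet(B;t_0)$ in which the coefficient semifield is the tropical semifield $\mathrm{Trop}(y_1,\dots,y_n)$ and the initial $y$-tuple at $t_0$ is $(y_1,\dots,y_n)$), and then deduce the separation formula in an arbitrary semifield by specialization. The Laurent phenomenon of~\cite{FZ1}, applied to $\A_\bullet(B;t_0)$, shows that every cluster variable $x_{j;t}$ is a Laurent polynomial in $x_1,\dots,x_n$ whose coefficients lie in $\Z[y_1,\dots,y_n]$. This is the anchor that allows both $F_{j;t}$ and $g_{j;t}$ to be extracted from a single, intrinsically defined object.

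The $F$-polynomial is then defined by specialization: set
\[
F_{j;t}(u_1,\dots,u_n) := x_{j;t}\bigl|_{x_1=\dots=x_n=1,\;y_i=u_i},
\]
and the $g$-vector is defined by exploiting the $\Z^n$-grading on $\A_\bullet(B;t_0)$ given by $\deg(x_i)=e_i$ and $\deg(y_j)=-\sum_i b_{ij}e_i$ (which is compatible with the exchange relations, since the relation $x_kx_k'=y_k\prod x_i^{[b_{ik}]_+}+\prod x_i^{[-b_{ik}]_+}$ is homogeneous of degree $0$ in the $y_k$-monomial and of degree $0$ in the comparison of the two terms). One checks by induction on the distance in $\T_n$ from $t_0$ that each $x_{j;t}$ is homogeneous with respect to this grading; the multi-degree is then defined to be $g_{j;t}$. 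Simultaneously, one derives the recursions for $F_{j;t}$ and $g_{j;t}$ under a mutation $t\mapsto t'$ (direction $k$) directly from the exchange relation: $F_{j;t'}=F_{j;t}$ for $j\neq k$, while $F_{k;t'}$ is the reduced form of $(u^{\text{stuff}}\cdot\text{num} + u^{\text{stuff}}\cdot\text{denom})/F_{k;t}$. From this recursion and induction one verifies that $F_{j;t}$ lies in $\Z[u_1,\dots,u_n]$ (i.e.\ has no spurious denominator) and is not divisible by any $u_i$, using the fact that the specialization $u_i\to 0$ makes the recursion collapse to the tropical one.

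To obtain the separation formula in part (2), I would appeal to the universality of principal coefficients: any cluster algebra $\A$ over a semifield $\P$ with exchange matrix $B$ at $t_0$ is obtained from $\A_\bullet(B;t_0)$ by evaluating $y_1,\dots,y_n\mapsto y_1,\dots,y_n\in\P$ and simultaneously twisting each $x_i$ by the coefficient normalization $\hat y_j = y_j\prod_i x_i^{b_{ij}}$. Writing the Laurent expansion of $x_{j;t}$ in $\A_\bullet$ as a sum $\sum c_{\mathbf m}\,x^{\mathbf m}\,y^{\mathbf n}$ and using homogeneity with respect to the grading above, one sees that $\mathbf m - g_{j;t}$ is an integer combination of columns of $B$; regrouping the terms accordingly turns the sum into $x^{g_{1j;t}}_1\cdots x^{g_{nj;t}}_n$ times a polynomial in the $\hat y_j$, which is precisely $F_{j;t}|_\F(\hat y_1,\dots,\hat y_n)$. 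Dividing by the tropical evaluation $F_{j;t}|_\P(y_1,\dots,y_n)$ accounts for the normalization forced by the coefficient semifield.

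The main obstacle is the ``subtraction-free'' clause of part (1): although $F_{j;t}\in\Z[u_1,\dots,u_n]$ by the above, to evaluate $F_{j;t}$ in an arbitrary semifield one needs to express it as $P/Q$ with $P,Q$ having \emph{positive} integer coefficients. I would handle this by induction together with the recursion, observing that the mutation step writes a new $F$ as a subtraction-free rational expression in previously constructed $F$'s (since both sides of the exchange relation for $x_{k;t'}$ are manifestly subtraction-free, and dividing by $F_{k;t}$ is permitted in the semifield of subtraction-free rational functions). The base case $F_{j;t_0}=1$ is trivial, so the inductive step propagates subtraction-freeness throughout the tree, completing the proof.
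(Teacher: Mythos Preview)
The paper does not give its own proof of this proposition: it is quoted verbatim as a result from~\cite{FZ4} (Proposition~3.13 and Corollary~6.3 there) and used as a black box to set up the $g$-vector mutation needed for the ping-pong arguments. So there is no ``paper's proof'' to compare your attempt against.

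That said, what you have written is an accurate high-level sketch of the original argument in~\cite{FZ4}: one works in the principal-coefficients algebra $\A_\bullet(B;t_0)$, uses the Laurent phenomenon to extract $F_{j;t}$ by the specialization $x_i=1$, introduces the $\Z^n$-grading with $\deg x_i=e_i$ and $\deg y_j=-b_j$ to show homogeneity and define $g_{j;t}$, and then regroups the Laurent expansion using that homogeneity to obtain the separation formula. The subtraction-free clause is indeed handled exactly as you say, by induction on the mutation recursion. Your outline is correct and matches the source; it is simply not something the present paper undertakes.
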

Here the tropical semifield ${\mathbb P}$ can be assumed to be trivial for our purposes, and $\F$ can be assumed to be a field of rational functions in $(x_1,\dots,x_n)$ with rational coefficients.

Mutations of $g$-vectors are described by the following conjecture~\cite{FZ4}.

\begin{conjecture}[\cite{FZ4}, Conjecture 7.12]
Let $t_0\longleftrightarrow t_1$
be two adjacent vertices in ${\mathbb T}^n$, and let $B^1 = \mu_k(B^0)$.
Then, for any $t \in {\mathbb T}^n$ and $a\in {\mathbb Z}^n_{\ge 0}$,
the $g$-vectors $g^{B^0;t_0}_{a;t} = (g_1, \ldots , g_n)$ and $g^{B^1;t_1}_{a;t} =
(g'_1, \ldots , g'_n )$ are related as follows:
\begin{equation}\label{eq:g-mutation}
g'_j =
\left\{
  \begin{array}{ll}
    g_k, & \hbox{if } j=k; \\
    g_j+[B^0_{jk}]_+ g_k-B^0_{jk} [g_k]_-, & \hbox{if } j\ne k,
  \end{array}
\right.
\end{equation}
where $X_+=max(X,0)$ and  $X_-=min(X,0)$ denote the positive and the negative part of the real number $X$.
\end{conjecture}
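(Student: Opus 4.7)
The plan is to derive (\ref{eq:g-mutation}) from the ``separation of additions'' formula of Proposition~3.13 together with \emph{sign coherence} of the relevant $F$-polynomials (equivalently, of $c$-vectors); sign coherence will be the main nontrivial input. I would first specialize to the cluster algebra with \emph{principal coefficients} at $t_0$, so that $y_1,\dots,y_n$ are algebraically independent and the ambient field carries a $\Z^n$-grading with $\deg x_i=e_i$ and $\deg y_j=-b_j^0$ (the $j$-th column of $B^0$). Every cluster variable $x_{a;t}^{B^0;t_0}$ is then homogeneous, and its multidegree coincides with the $g$-vector $g^{B^0;t_0}_{a;t}$. Since both sides of (\ref{eq:g-mutation}) depend only on $B^0$ and on the combinatorial data $(t_0,t_1,a,t)$, it suffices to prove the identity in this principal-coefficient model.

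Next, the strategy is to express the same cluster variable $x_{a;t}$ using the two adjacent initial seeds at $t_0$ and at $t_1=\mu_k(t_0)$, and compare. Separation of additions yields
$$x_{a;t}=\prod_i x_i^{g_i}\,\frac{F_{a;t}(\hat y_1,\dots,\hat y_n)}{F_{a;t}|_{\mathrm{trop}}(y_1,\dots,y_n)},\qquad x_{a;t}=\prod_i (x'_i)^{g'_i}\,\frac{F'_{a;t}(\hat y'_1,\dots,\hat y'_n)}{F'_{a;t}|_{\mathrm{trop}}(y'_1,\dots,y'_n)}.$$
Substituting the cluster exchange relation $x_k x'_k=\prod_i x_i^{[b^0_{ik}]_+}+y_k\prod_i x_i^{[-b^0_{ik}]_+}$ together with the $Y$-seed mutation rule ($y'_k=y_k^{-1}$, and $y'_j=y_j y_k^{[b^0_{kj}]_+}(1+y_k)^{-b^0_{kj}}$ for $j\ne k$) into the second expression converts the equality into an identity in the original initial variables $(x,y)$.

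By sign coherence, the tropical factor $F_{a;t}|_{\mathrm{trop}}(y)$ is a single Laurent monomial and $F_{a;t}(\hat y)$ has constant term $1$, so the $g$-vector can be read off as the leading $x$-monomial of $x_{a;t}$. Carrying out the same extraction on the substituted right-hand side forces a case split according to the sign of $g_k$: when $g_k\ge 0$ the first term of $x_k x'_k$ dominates, contributing $[b^0_{jk}]_+ g_k$ to the $j$-th new coordinate; when $g_k<0$ the second term dominates, contributing $-b^0_{jk}g_k=-b^0_{jk}[g_k]_-$. The two branches combine into the uniform expression $g_j+[b^0_{jk}]_+ g_k-b^0_{jk}[g_k]_-$ of (\ref{eq:g-mutation}) for $j\ne k$; the involutivity $\mu_k^2=\mathrm{id}$, together with $\hat y'_k=\hat y_k^{-1}$, forces the $j=k$ identity.

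The hard part is establishing sign coherence, which is not elementary: for skew-symmetric $B^0$ it is due to Derksen--Weyman--Zelevinsky via decorated representations of quivers with potential, and in full skew-symmetrizable generality it follows from the scattering-diagram construction of Gross--Hacking--Keel--Kontsevich. Once sign coherence is in place, the remainder is careful bookkeeping of Laurent monomials and tropical evaluations.
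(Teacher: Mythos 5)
The paper does not actually prove this statement: it is quoted verbatim as Conjecture~7.12 of \cite{FZ4}, and the paper's entire justification is the two sentences that follow it --- for skew-symmetric $B^0$ the formula is a theorem of Derksen--Weyman--Zelevinsky \cite{DWZ2}, and the only skew-symmetrizable exchange matrices to which the paper ever applies Equation~\eqref{eq:g-mutation} are shown in \cite{FST2} to admit skew-symmetric unfoldings, through which the formula transfers. Your proposal is therefore a genuinely different and more ambitious route: it is, in outline, the standard derivation of the $g$-vector mutation rule from sign coherence (the mechanism of \cite[Sections~6--7]{FZ4}, carried out in the style of Nakanishi--Zelevinsky's tropical-duality argument), with sign coherence supplied by \cite{DWZ2} in the skew-symmetric case and by Gross--Hacking--Keel--Kontsevich in full skew-symmetrizable generality. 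What your route buys is uniformity and generality --- no case-by-case appeal to unfoldings, and validity for all skew-symmetrizable $B^0$. What the paper's route buys is economy and, historically, availability: the scattering-diagram proof of general sign coherence postdates this paper, which is precisely why the authors sidestep it by restricting to matrices with skew-symmetric unfoldings.

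Three points in your sketch deserve correction or sharpening. First, a convention slip: with your grading $\deg x_i=e_i$, $\deg y_j=-b^0_j$, homogeneity forces $y_k$ onto the monomial $\prod_i x_i^{[b^0_{ik}]_+}$ in the exchange binomial, i.e.\ $x_kx'_k=y_k\prod_i x_i^{[b^0_{ik}]_+}+\prod_i x_i^{[-b^0_{ik}]_+}$; as you wrote it the two terms have different multidegrees, and the subsequent bookkeeping would come out wrong unless you flip conventions consistently (similarly, in the tropical coefficient semifield the factor $(1+y_k)^{-b^0_{kj}}$ disappears: $y'_j=y_jy_k^{[b^0_{kj}]_+}$). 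Second, the phrase ``the first term dominates'' is not literally an argument: what one actually computes, after expressing $x_{a;t}$ through the seed at $t_1$ inside the principal-coefficients-at-$t_0$ algebra, is the tropical monomial $F'_{a;t}|_{\operatorname{Trop}(y)}(y'_1,\dots,y'_n)$; it is the constant-term-$1$ property (sign coherence) that evaluates this to a power of $y_k$ whose exponent produces exactly the case split on the sign of $g_k$. Third, your appeal to involutivity for the $j=k$ coordinate, done correctly, yields $g'_k=-g_k$ --- which is what \cite{FZ4} in fact asserts; the displayed formula above ($g'_k=g_k$) is a typo in the quotation, as one checks on the cluster variable created at $t_1$, whose $g$-vector with respect to $(B^0;t_0)$ is $-e_k+\sum_i[-b^0_{ik}]_+e_i$ while with respect to $(B^1;t_1)$ it is $e_k$.
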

For skew-symmetric exchange matrices $B^0(B^1)$ the conjecture was proved in~\cite{DWZ2}.

In order to prove exponential growth we will use Equation~\ref{eq:g-mutation}. Moreover, we will also apply Equation~\ref{eq:g-mutation} to particular skew-symmetrizable exchange matrices. However, all the skew-symmetrizable exchange matrices we consider have skew-symmetric unfoldings, so Equation~\ref{eq:g-mutation} clearly holds.

We will consider all the exceptional types of cluster algebras one by one. Our aim is to find two sequences of mutations acting on the space $E_G$ of $g$-vectors as in Lemma~\ref{lem:ping-pong}.

\subsection{$X_6$ and $X_7$}
\label{sec:X6}

We start with cluster algebras with diagrams $X_6$ and $X_7$ shown in Fig.~\ref{allfig}. Let us label vertices of the diagram $X_6$ as shown in Fig.~\ref{fig:X6}.
%FIGURE

\begin{figure}[!h]
\begin{center}
\psfrag{1}{\small $1$}
\psfrag{2}{\small $2$}
\psfrag{3}{\small $3$}
\psfrag{4}{\small $4$}
\psfrag{5}{\small $5$}
\psfrag{6}{\small $6$}
\psfrag{2_}{\scriptsize $4$}
\epsfig{file=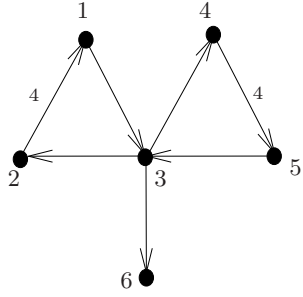,width=0.25\linewidth}
\caption{Diagram for  $X_6$}
\label{fig:X6}
\end{center}
\end{figure}

We consider two following mutation sequences
${\bf a}=[3, 2, 1]^{10}$ and ${\bf b}=[3, 5, 4, 2, 6]^4$ (by $[i_1,\dots,i_k]$ we mean a sequence of mutations $\mu_{i_k}\dots\mu_{i_1}$). By direct calculation we observe that both ${\bf a}$ and ${\bf b}$ preserve the diagram shown in Fig.~\ref{fig:X6}, or, in other words, they are elements of the mapping class group of $X_6$.

Note that both ${\bf a}$ and ${\bf b}$ act on the space $E_G$ of $g$-vectors of $X_6$ as described in Section~\ref{sec:g-vectors}.

Let us define following subsets of $E_G$:
\begin{eqnarray*}
% \nonumber to remove numbering (before each equation)
X^+_{\bf a}(\varepsilon) &:= &\{(T-\nu, -T, 0,0,0,\nu), \text{ where } T>0, \nu>0, \nu<\varepsilon T\}, \\
X^-_{\bf a}(\varepsilon) &:= &\{(T, -T+\nu, -\nu,0,0,\nu), \text{ where } T>0, \nu>0, \nu<\varepsilon T\}, \\
X^+_{\bf b}(\varepsilon) &:= &\{(\nu, T-\nu, -T,0,0,T), \text{ where } T>0, \nu>0, \nu<\varepsilon T\}, \\
X^-_{\bf b}(\varepsilon) &:= &\{(-\nu, T-\nu, -T+\nu,0,0,T),\text{ where } T>0, \nu>0, \nu<\varepsilon T\}.
\end{eqnarray*}

We see by inspection for $\varepsilon <1/15$ that ${\bf a, a^{-1}, b, b^{-1}}$ act linearly on $X^\pm_{\bf a}$ ($X^\pm_{\bf b}$, correspondingly). In particular,
$$
\begin{array}{ll}
% \nonumber to remove numbering (before each equation)
  {\bf a}(T-\nu, -T, 0,0,0,\nu) &= (T+14 \nu,-T-15\nu,0,0,0,\nu),  \\
  {\bf a^{-1}}(T, -T+\nu, -\nu,0,0,\nu) &= (T+15 \nu,-T-14\nu,-\nu,0,0,\nu), \\
  {\bf b}(\nu, T-\nu, -T,0,0,T) &= (-\nu,T+2\nu,-T-3\nu,0,0,T+3\nu), \\
  {\bf b^{-1}}(-\nu, T-\nu, -T+\nu,0,0,T) &= (-\nu,T+2\nu,-T-2\nu,0,0,T+3\nu).
\end{array}
$$
Note also that $X^+_{\bf a}(\varepsilon)$ is invariant under ${\bf a}$. Indeed,
$${\bf a}(T-\nu, -T, 0,0,0,\nu) = (T'-\nu,-T',0,0,0,\nu),$$
where $T'=T+15\nu$. Clearly, $\nu<\varepsilon T\le \varepsilon T'$.

\noindent
Similarly, $${\bf a^{-1}}(X^-_{\bf a}(\varepsilon))\subset X^-_{\bf a}(\varepsilon), \
{\bf b}(X^+_{\bf b}(\varepsilon))\subset X^+_{\bf b}(\varepsilon),\
{\bf b^{-1}} (X^-_{\bf b}(\varepsilon))\subset X^-_{\bf b}(\varepsilon).$$
Moreover, for any $v_z\in X^+_{\bf a}$ we have
$$\lim_{n\to\infty} \frac{{\bf a^n}(v_z)}{|{\bf a^n}(v_z)|}=
(\frac{1}{\sqrt{2}},-\frac{1}{\sqrt{2}},0,0,0,0),$$
and, similarly
$$\lim_{n\to\infty} \frac{{\bf a^{-n}}(v_z)}{|{\bf a^{-n}}(v_z)|}=
(\frac{1}{\sqrt{2}},-\frac{1}{\sqrt{2}},0,0,0,0,0)\ \text{for}\  v_z\in X^-_{\bf a},$$
$$\lim_{n\to\infty} \frac{{\bf  b^n}(v_z)}{|{\bf b^n}(v_z)|}=
(0,\frac{1}{\sqrt{3}},-\frac{1}{\sqrt{3}},0,0,\frac{1}{\sqrt{3}})\ \text{for}\ v_z\in X^+_{\bf b},$$
$$\lim_{n\to\infty} \frac{{\bf  b^{-n}}(v_z)}{|{\bf b^{-n}}(v_z)|}=
(0,\frac{1}{\sqrt{3}},-\frac{1}{\sqrt{3}},0,0,\frac{1}{\sqrt{3}})\ \text{for}\  v_z\in X^-_{\bf b}.$$

\noindent
Computations in Maple show that

\smallskip
\begin{center}
${\bf a}^{10}(0,1,-1,0,0,1)\in X^+_{\bf a}(\frac{1}{15})$, \qquad\quad
${\bf a}^{-10}(0,1,-1,0,0,1)\in X^-_{\bf a}(\frac{1}{15})$,\\
\smallskip
${\bf b}^{10}(1,-1,0,0,0,0)\in X^+_{\bf b}(\frac{1}{15})$, \qquad\quad
${\bf b}^{-10}(1,-1,0,0,0,0)\in X^-_{\bf b}(\frac{1}{15})$.
\end{center}

\smallskip

Since
${\bf a}^{\pm 10}$, ${\bf b}^{\pm 10}$ act continuously on $E_G$,
there is a sufficiently small $\epsilon>0$ and an integer $N>0$ such that
$${\bf a}^{N}\!\!\left(X^+_{\bf b}(\epsilon)\right)\!\subset\! X^+_{\bf a}(\epsilon),\quad 
{\bf a}^{-N}\!\!\left(X^+_{\bf b}(\epsilon)\right)\!\subset\! X^-_{\bf a}(\epsilon),\quad 
{\bf a}^{N}\!\!\left(X^-_{\bf b}(\epsilon)\right)\!\subset\! X^+_{\bf a}(\epsilon),\quad 
{\bf a}^{-N}\!\!\left(X^-_{\bf b}(\epsilon)\right)\!\subset\! X^-_{\bf a}(\epsilon).$$

Similarly,
$${\bf b}^{N}\!\!\left(X^+_{\bf a}(\epsilon)\right)\!\subset\! X^+_{\bf b}(\epsilon),\quad
{\bf b}^{-N}\!\!\left(X^+_{\bf a}(\epsilon)\right)\!\subset\! X^-_{\bf b}(\epsilon),\quad
{\bf b}^{N}\!\!\left(X^-_{\bf a}(\epsilon)\right)\!\subset\! X^+_{\bf b}(\epsilon),\quad
{\bf b}^{-N}\!\!\left(X^-_{\bf a}(\epsilon)\right)\!\subset\! X^-_{\bf b}(\epsilon).$$

Now define
$$X_{\bf a}=X^-_{\bf a}(\epsilon)\cup X^+_{\bf a}(\epsilon),\qquad
X_{\bf b}=X^-_{\bf b}(\epsilon)\cup X^+_{\bf b}(\epsilon)$$

Let $H_{\bf a}=\langle {\bf a}^N\rangle, \  H_{\bf b}=\langle {\bf b}^N\rangle$ be two infinite cyclic subgroups of mapping class group. One can easily see that the collection $H_{\bf a},\ H_{\bf b}$ and two subsets $X_{\bf a},\ X_{\bf  b}$ satisfy assumptions of Corollary~\ref{cor:ping-pong}.

Therefore, we obtain the following.

\begin{lemma}\label{lem:X6} The cluster algebra of type $X_6$ has exponential growth.
\end{lemma}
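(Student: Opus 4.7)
The plan is to construct an explicit free subgroup of rank $2$ inside the mapping class group of the cluster algebra of type $X_6$, using the ping-pong lemma (Corollary~\ref{cor:ping-pong}) applied to the tropical action of mutations on the space $E_G$ of $g$-vectors. Exponential growth of a free group on two generators then forces exponential growth of the mapping class group, which in turn forces exponential growth of the cluster algebra, because distinct mapping-class elements applied to a fixed initial cluster produce distinct clusters.

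First I would label the vertices of $X_6$ as in Fig.~\ref{fig:X6} and propose two candidate sequences of mutations, ${\bf a} = [3,2,1]^{10}$ and ${\bf b} = [3,5,4,2,6]^4$. The first task is to verify, by direct computation in the mutation class, that ${\bf a}$ and ${\bf b}$ each return the diagram to itself, so that they are bona fide elements of the mapping class group. This is a finite check but is the kind of routine calculation that one would carry out mechanically.

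Next, I would use the piecewise-linear formula \eqref{eq:g-mutation} to compute the action of ${\bf a}^{\pm 1}$ and ${\bf b}^{\pm 1}$ on $E_G \cong \R^6$. The aim is to identify, for a small parameter $\varepsilon > 0$, four narrow ``cones''
\begin{align*}
X^+_{\bf a}(\varepsilon), \quad X^-_{\bf a}(\varepsilon), \quad X^+_{\bf b}(\varepsilon), \quad X^-_{\bf b}(\varepsilon)
\end{align*}
built around the attracting/repelling directions of ${\bf a}$ and ${\bf b}$, such that on each cone the relevant map acts linearly (so the piecewise-linear $g$-vector mutation becomes one fixed linear map for the whole orbit), and such that ${\bf a}$ preserves $X^+_{\bf a}(\varepsilon)$ and pushes vectors further into it, and similarly for the other three. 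Concretely I would take the cones to consist of vectors of the form $(T-\nu,-T,0,0,0,\nu)$ and its ${\bf a}^{-1}$-, ${\bf b}^{\pm}$-analogues with $0 < \nu < \varepsilon T$. Checking linearity on these cones and computing the iterated action reduces to a bounded list of mutations; the payoff is the existence of distinct limit rays for ${\bf a}^{\pm n}$ on $X^\pm_{\bf a}$ and for ${\bf b}^{\pm n}$ on $X^\pm_{\bf b}$.

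The main obstacle is to arrange the cones so that they satisfy the crossed inclusions required by ping-pong: $\mathbf{a}^{\pm N}$ must send each of $X^\pm_{\bf b}$ into the appropriate $X^\pm_{\bf a}$, and symmetrically. I would handle this by first picking a sample vector in each $X^\pm_{\bf b}$, applying ${\bf a}^{\pm 10}$ to verify numerically that the image lands strictly inside $X^\pm_{\bf a}(1/15)$, and then invoking continuity of ${\bf a}^{\pm N}$ as a piecewise-linear map on $E_G$ to conclude that for $\varepsilon$ small and $N$ large the inclusions hold on the whole cones. Setting $X_{\bf a} = X^+_{\bf a}(\varepsilon) \cup X^-_{\bf a}(\varepsilon)$, $X_{\bf b} = X^+_{\bf b}(\varepsilon) \cup X^-_{\bf b}(\varepsilon)$ and $H_{\bf a} = \langle {\bf a}^N \rangle$, $H_{\bf b} = \langle {\bf b}^N \rangle$, the hypotheses of Corollary~\ref{cor:ping-pong} are met, yielding $\langle {\bf a}^N, {\bf b}^N \rangle \cong \Z \ast \Z$. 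This free subgroup of the mapping class group gives exponential growth of the mapping class group, hence of the cluster algebra, establishing the lemma.
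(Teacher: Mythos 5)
Your proposal follows the paper's own proof essentially verbatim: the same mutation sequences ${\bf a}=[3,2,1]^{10}$ and ${\bf b}=[3,5,4,2,6]^4$, the same four narrow cones in $g$-vector space with the $\varepsilon<1/15$ linearity threshold, the same numerical check on sample vectors followed by a continuity argument to get the crossed inclusions, and the same invocation of Corollary~\ref{cor:ping-pong}. The argument is correct and matches the paper's approach.
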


\begin{corollary} The cluster algebra of type $X_7$ has exponential growth.
\end{corollary}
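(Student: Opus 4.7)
The plan is to bootstrap the exponential growth of $X_7$ from Lemma~\ref{lem:X6} by viewing $X_6$ as a full subdiagram of $X_7$. The first step is to inspect Figure~\ref{allfig} and choose a representative of the mutation class of $X_7$ whose restriction to six of its seven vertices---call them $\{1,\dots,6\}$---is precisely the $X_6$ diagram used in Section~\ref{sec:X6}. I take this as the initial exchange matrix $B^{(7)}$ of $X_7$, so that its $6\times 6$ principal submatrix coincides with the initial matrix $B^{(6)}$ of $X_6$ from the proof of Lemma~\ref{lem:X6}.

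The second step exploits the following decoupling. For any $k\in\{1,\dots,6\}$, the mutation rule updates $b_{ij}$ with $i,j\in\{1,\dots,6\}$ using only entries of the $6\times 6$ block; the same holds coordinate-wise for the $g$-vector rule~(\ref{eq:g-mutation}) when restricted to the first six coordinates. Consequently, if $\pi:\mathbb{R}^7\to\mathbb{R}^6$ denotes the projection onto the first six coordinates, then for every word $w$ in the letters $\{1,\dots,6\}$ applied to the initial cluster, the projected $g$-vectors $\pi(g_j^w)$ with $j\in\{1,\dots,6\}$ in $X_7$ evolve identically to the $X_6$ $g$-vectors produced by the same word. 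In particular, the sequences $\mathbf{a}=[3,2,1]^{10}$ and $\mathbf{b}=[3,5,4,2,6]^{4}$ of Section~\ref{sec:X6} act on the projected $g$-vector space of $X_7$ exactly as they act on the $g$-vector space of $X_6$.

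The third step is to rerun the ping-pong argument verbatim, using the pullback sets $\pi^{-1}(X^{\pm}_{\mathbf{a}}(\epsilon))$ and $\pi^{-1}(X^{\pm}_{\mathbf{b}}(\epsilon))$ inside the $g$-vector space of $X_7$. These pullbacks are pairwise disjoint, and every inclusion of the form $\mathbf{a}^{\pm N}(\pi^{-1}(X^{\sigma}_{\mathbf{b}}))\subset\pi^{-1}(X^{\pm}_{\mathbf{a}})$ (and the symmetric ones with $\mathbf{a}$ and $\mathbf{b}$ interchanged) follows immediately from the corresponding $X_6$ inclusions, since only the first six coordinates appear in the defining inequalities. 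Corollary~\ref{cor:ping-pong} then yields that $\mathbf{a}^N$ and $\mathbf{b}^N$ generate a free subgroup of rank two in the group of transformations of $X_7$ $g$-vectors. Distinct reduced words in this free group, applied to the initial cluster of $X_7$, produce tuples of $g$-vectors already differing in their first six coordinates, hence genuinely distinct clusters; this gives exponentially many clusters at bounded mutation distance, as required.

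The main subtlety to flag, and the only point I expect to need attention, is that $\mathbf{a}^N$ and $\mathbf{b}^N$, although in the mapping class group of $X_6$, need not preserve the edges of $B^{(7)}$ incident to vertex $7$, so strictly speaking they lie outside the mapping class group of $X_7$. This is not an obstacle for the growth statement, because the growth rate is controlled by the number of distinct clusters reachable from the initial one, and distinctness follows from the free action on projected $g$-vectors, not from the words being mutationally trivial for $B^{(7)}$. Thus the whole argument reduces to the figure-level verification that some representative of the mutation class of $X_7$ restricts to $X_6$ on six vertices, together with the decoupling observation and a transcription of the $X_6$ ping-pong.
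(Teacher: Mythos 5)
Your argument is correct and takes essentially the same route as the paper: the paper's entire proof is the observation that $X_7$ contains $X_6$ as a subdiagram, so that mutating only at the six $X_6$-vertices embeds the exchange graph of $X_6$ into that of $X_7$ and exponential growth is inherited from Lemma~\ref{lem:X6}. Your projected-$g$-vector ping-pong is just a more explicit unpacking of that one-line containment, and the subtlety you flag (that ${\bf a},{\bf b}$ need not lie in the mapping class group of $X_7$) is harmless for exactly the reason you give, since only the autonomously evolving first six coordinates are needed to distinguish the resulting clusters.
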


\begin{proof} Indeed, the diagram $X_7$ contains the diagram $X_6$ as a subdiagram. Hence, exchange graph of $X_7$ contains exchange graph of $X_6$ as a subgraph, and therefore also grows exponentially.
\end{proof}

\subsection{$G_2^{(*,+)}$ and its unfolding $E_6^{(1,1)}$}\label{subsec:G_2*+}

There are two skew-symmetrizable matrices with diagram $G_2^{(*,+)}$. They are denoted by $G_2^{(1,3)}$ and $G_2^{(3,1)}$ according to Saito's notation for extended affine root systems~\cite{Sa}. These two matrices clearly define isomorphic cluster algebras. It was shown in~\cite{FST2} that both exchange matrices with diagram $G_2^{(*,+)}$ have an unfolding with diagram $E_6^{(1,1)}$. We will prove exponential growth of the cluster algebra with diagram $G_2^{(*,+)}$, and then deduce from it exponential growth of $E_6^{(1,1)}$.

The considerations are similar to those of Section~\ref{sec:X6}. Let us index the vertices of $G_2^{(*,+)}$ as shown in Fig.~\ref{fig:G_2*+}. The choice of labels $(3,1)$ and $(1,3)$ indicates which of the two matrices with this diagram we choose: the entries $\pm 3$ are located in the first and second columns and the third and fourth rows.

%FIGURE

\begin{figure}[!h]
\begin{center}
\psfrag{1}{\small $1$}
\psfrag{2}{\small $2$}
\psfrag{3}{\small $3$}
\psfrag{4}{\small $4$}
\psfrag{3,1}{\scriptsize $3,1$}
\psfrag{1,3}{\scriptsize $1,3$}
\epsfig{file=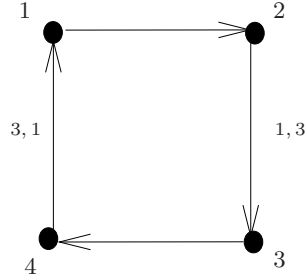,width=0.25\linewidth}
\caption{Diagram for $G_2^{(*,+)}$. The weight ``$m,n$'' on the arrow from vertex $i$ to vertex $j$ means that the ratio of $|b_{ij}|$ and $|b_{ji}|$ is equal to $m/n$. }
\label{fig:G_2*+}
\end{center}
\end{figure}

We will use the following  two mutation sequences: ${\bf a}=[1, 2, 3]^2$ and ${\bf b}=[2, 3, 4]^2$.
As above, both ${\bf a}$ and ${\bf b}$ are elements of the mapping class group of $G_2^{(*,+)}$, i.e. they preserve the diagram shown in  Fig.~\ref{fig:G_2*+}.
Note that ${\bf a},{\bf b}$ span a subgroup $\langle {\bf a},{\bf b}\rangle$  in the mapping class group of the diagram.

Consider the actions of ${\bf a}$ and ${\bf b}$ on the space $E_G$ of $g$-vectors. Endow $E_G$ with the standard dot product. For a vector $v\in E_G$ or subspace $V\subset E_G$ we denote their orthogonal complements by $v^\perp$ or $V^\perp$. Let $v_{\bf a}=(-1,-1,3,0)$. For sufficiently small $\epsilon$ we define cone
$$C_{\bf a}(\epsilon)=
\left\{\alpha v_{\bf a}+w, \text{ where } \alpha >0, w\in v_{\bf a}^\perp, \frac{|w|}{\alpha |v_{\bf a}|}< \epsilon\right\}$$

As we have mentioned above, the action of ${\bf a}$ on $E_G$ is piecewise linear. However, the action turns out to be linear on $C_{\bf a}(\epsilon)$ if $\epsilon$ is sufficiently small.

More precisely, ${\bf a}$ maps the $g$-vector  $v_z=v_{\bf a}+\bar z$ for sufficiently small $\bar z=(z_1,z_2,z_3,z_4)$
to the vector
\begin{equation}\label{eq:V4a}
{\bf a}(v_z)=v_{\bf a}+(2 z_1+2 z_2+z_3,z_1+3 z_2+z_3,-3 z_1-6 z_2-2 z_3,z_4).
\end{equation}

Direct computation shows that ${\bf a}=T^{-1}\left(
                                                \begin{array}{cccc}
                                                  1 & 1 & 0 & 0 \\
                                                  0 & 1 & 0 & 0 \\
                                                  0 & 0 & 1 & 0 \\
                                                  0 & 0 & 0 & 1 \\
                                                \end{array}
                                              \right)
 T$, where $T\in GL_4$. Note that the linear operator ${\bf a}$ contains (as a direct summand) the
Jordan  block with eigenvalue one
and corresponding eigenvector $v_{\bf a}$.

%The corresponding linear transformation is a direct sum of the identity operator on 2-dimensional space
%and the
%Jordan  block
%$ \begin{pmatrix}
%          1 & 1 \\
%          0 & 1 \\
%        \end{pmatrix}
%$   with corresponding eigenvector $v_{\bf a}$.

Then powers of ${\bf a}$ act on $C_{\bf a}(\epsilon)$ for small epsilon as follows.
Denote the second coordinate of the vector $T{\bar z}$ by
$\kappa_{\bf a}$, a simple computation shows $\kappa_{\bf a}=z_1+2 z_2+z_3$.

Then ${\bf a}^r(v_z)=v_z+ r\kappa_{
\bf a}(v_z) v_{\bf a}$.

%Define a linear function $\kappa_{\bf a}(v_z)=z_1+2 z_2 +z_3$. Then
%${\bf a}^r: v_z\mapsto v_z+r \kappa_{\bf a}(v_z) v_{\bf a}$.

Define $X^+_{\bf a}(\epsilon)=C_{\bf a}(\epsilon)\cap \{\kappa_{\bf a}>0\}$.
Then
%\begin{center}
$$\lim_{n\to \infty} \dfrac{{\bf a}^n v_z}{|{\bf a}^n v_z|}=\dfrac{v_{\bf a}}{|v_{\bf a}|} \ \text{if} \
v_z\in X^+_{\bf a}(\epsilon).$$
%\end{center}

Similarly, define $X^-_{\bf a}(\epsilon)=C_{\bf a}(\epsilon)\cap \{\kappa_{\bf a}<0\}$. We have
%\begin{center}
$$\lim_{n\to \infty} \dfrac{{\bf a}^{-n} v_z}{|{\bf a}^{-n} v_z|}=\dfrac{v_{\bf a}}{|v_{\bf a}|}\ \text{for}\ v_z\in X^-_{\bf a}(\epsilon).$$
%\end{center}

 From Equation~\ref{eq:V4a} we see that each $X^\pm_{\bf a}(\epsilon)$ is invariant under ${\bf a}^{\pm 1}$ for $\epsilon$ small enough.

Let $v_{\bf b}=(0,-1,1,1)$.
We consider the action of ${\bf b}$ on $E_G$ in a neighborhood of the ray
$\{\alpha\cdot v_{\bf b}\,\big| \alpha>0\}$.

Define the cone $C_{\bf b}(\epsilon)=
\left\{\alpha v_{\bf b}+w,\text{ where } \alpha >0, w\in v_{\bf b}^\perp, \frac{|w|}{\alpha |v_{\bf b}|}< \epsilon\right\}$.

For sufficiently small $\bar z=(z_1,z_2,z_3,z_4)$, ${\bf b}$ maps the $g$-vector $v_z=v_{\bf b}+\bar z$ to the
vector

\begin{equation}\label{eq:V4b}
{\bf b}(v_z)=v_{\bf b}+(z_1,4 z_2+2 z_3+z_4,-3 z_2-z_3-z_4,-3 z_2-2 z_3)
\end{equation}

The action is linear on $C_{\bf b}(\epsilon)$ and the corresponding linear operator is a direct sum of the identity operator on 2-dimensional space
and a $2\times 2$
Jordan  block with eigenvalue 1  with corresponding eigenvector $v_{\bf b}$.

Let $\kappa_{\bf b}(v_z)=z_2+(2/3) z_3+(1/3) z_4$, then we have ${\bf b}^r(v_z)=v_z+r \kappa_{\bf b}(v_z) v_{\bf b}$ for any positive integer $r$.
Denote $X_{\bf b}^+(\epsilon)=C_{\bf b}(\epsilon)\cap\{ \kappa_b>0\}$.
Then Equation~\ref{eq:V4b} implies that
$$\lim_{n\to \infty} \dfrac{{\bf b}^n v_z}{|{\bf b}^n v_z|}=\dfrac{v_{\bf b}}{|v_{\bf b}|} \ \text{for} \ v_z\in X^+_{\bf b}(\epsilon).$$

Similarly, we can define $X_{\bf b}^-(\epsilon)=C_{\bf b}(\epsilon)\cap\{ \kappa_{\bf b}<0\}$. Then
$$\lim_{n\to \infty} \dfrac{{\bf b}^{-n} v_z}{|{\bf b}^{-n} v_z|}=\dfrac{v_{\bf b}}{|v_{\bf b}|}\ 
\text{for} \ v_z\in X^-_{\bf b}(\epsilon).$$

One can also note that  $X^+_{\bf b}(\epsilon)$ is invariant under ${\bf b}$ and $X^-_{\bf b}(\epsilon)$ is invariant under ${\bf b}^{-1}$ for sufficiently small $\epsilon$.  Straightforward computations using Maple show that ${\bf b}^{\pm 10}(v_{\bf a})\in C_{\bf b}(\epsilon)$, where $\epsilon$ is small enough for Equation~\ref{eq:V4b} to hold.
Moreover, $\kappa_{\bf b}({\bf b}^{10}(v_{\bf a}))>0$, while $\kappa_{\bf b}({\bf b}^{-10}(v_{\bf a}))<0$.

Vice versa,
${\bf a}^{\pm 10}(v_{\bf b})\in C_{\bf a}(\epsilon)$, where $\epsilon$ is small enough for Equation~\ref{eq:V4a} to hold.
Also, $\kappa_{\bf a}({\bf a}^{10}(v_{\bf b}))>0$, while $\kappa_{\bf a}({\bf a}^{-10}(v_{\bf b}))<0$.

Hence, for any $\epsilon>0$ small enough we can find
a sufficiently large positive integer $N_\epsilon$
such that ${\bf b}^{N_\epsilon}(X_{\bf a}(\epsilon))\subset X^+_{\bf b}(\epsilon)$, ${\bf b}^{-N_\epsilon}(X_a(\epsilon))\subset X^-_{\bf b}(\epsilon)$,
${\bf a}^{N_\epsilon}(X_{\bf b}(\epsilon))\subset X^+_{\bf a}(\epsilon)$, ${\bf a}^{-N_\epsilon}(X_b(\epsilon))\subset X^-_{\bf a}(\epsilon)$.

Note that the collection of two infinite cyclic groups $H_{\bf a}=\langle{\bf a}^{N_\epsilon}\rangle$,
$H_{\bf b}=\langle {\bf b}^{N_\epsilon}\rangle$, and two sets $C_{\bf a}(\epsilon)$, $C_{\bf b}(\epsilon)$
satisfy the assumptions of Corollary~\ref{cor:ping-pong}.
Thus,

\begin{lemma}\label{lem:G_2*+} A cluster algebra with diagram of type $G_2^{(*,+)}$ has exponential growth.
\end{lemma}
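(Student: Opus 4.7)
The plan is to apply the ping-pong corollary (Corollary \ref{cor:ping-pong}) following exactly the template used for $X_6$ in Section \ref{sec:X6}: produce two elements of the mapping class group whose tropicalized actions on the $g$-vector space have well-separated dynamics, and then show that sufficiently high powers satisfy the ping-pong hypotheses on suitable cones.

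First I would hunt for two short mutation sequences preserving the diagram in Figure \ref{fig:G_2*+}. The diagram has two triangles sharing the edge $(2,3)$, so the natural candidates are cycles around each triangle, something like ${\bf a}=[1,2,3]^k$ and ${\bf b}=[2,3,4]^k$ for the smallest $k$ making each word mutationally trivial; because of the weights $1{:}3$ the first plausible choice is $k=2$. One directly checks that these sequences do preserve the exchange matrix. Since the diagram has a skew-symmetric unfolding to $E_6^{(1,1)}$, the $g$-vector mutation rule \eqref{eq:g-mutation} is validated (via the DWZ result recalled after Conjecture 7.12 of \cite{FZ4}), so I may work with \eqref{eq:g-mutation} directly on the four-dimensional $g$-vector space of $G_2^{(*,+)}$.

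Next I would linearize each of ${\bf a}$, ${\bf b}$ near a distinguished fixed ray $v_{\bf a}$, resp. $v_{\bf b}$. On a small enough cone around such a ray the piecewise-linear map becomes honestly linear, and I expect the resulting $4\times 4$ operator to split as an identity summand plus a $2\times 2$ Jordan block with eigenvalue $1$ whose eigenvector is $v_{\bf a}$ (resp. $v_{\bf b}$); the dynamics is parabolic, not hyperbolic. Writing a starting vector $v_z$ in Jordan coordinates, one reads off an explicit linear functional $\kappa_{\bf a}$ (the coefficient of the generalized eigenvector) whose sign decides whether $v_z$ escapes to the $v_{\bf a}$-direction under positive or negative iteration of ${\bf a}$. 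Splitting the cone into two halves $X^{\pm}_{\bf a}(\epsilon)=C_{\bf a}(\epsilon)\cap\{\pm\kappa_{\bf a}>0\}$, and doing the same for ${\bf b}$, I obtain four candidate ping-pong sets whose self-invariance under the corresponding positive/negative power is automatic from the linearized formula.

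The final step is the cross-invariance ${\bf a}^{\pm N}(X^{\pm}_{\bf b})\subset X^{\pm}_{\bf a}$ and ${\bf b}^{\pm N}(X^{\pm}_{\bf a})\subset X^{\pm}_{\bf b}$ for some large $N$. By continuity of ${\bf a}^{\pm N}$, it suffices to check a single numerical condition: that ${\bf a}^{\pm M}(v_{\bf b})$ lies in the interior of $X^{\pm}_{\bf a}(\epsilon)$ for some moderate $M$ and some small $\epsilon$, and symmetrically for ${\bf b}^{\pm M}(v_{\bf a})$. These are finite Maple computations once the linearizations are in hand. With all four inclusions in place, Corollary \ref{cor:ping-pong} produces a rank-two free subgroup $\langle {\bf a}^N, {\bf b}^N\rangle$ of the mapping class group, which suffices for exponential growth.

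The main obstacle is the parabolic nature of the dynamics: unlike in the classical hyperbolic ping-pong, the iterates of ${\bf a}$ do not contract transverse directions, so cones must be chosen with care. The saving feature is that along $v_{\bf a}$ the Jordan action $v\mapsto v+r\kappa_{\bf a}(v)\,v_{\bf a}$ grows linearly in $r$ while the transverse component is bounded, so the aperture shrinks like $1/r$; this is exactly the mechanism that makes $X^{+}_{\bf a}(\epsilon)$ genuinely nest inside itself and makes the cross-inclusions robust under small perturbations of $\epsilon$. Once this asymptotic is quantified, the ping-pong setup closes and exponential growth follows.
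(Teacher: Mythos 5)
Your proposal matches the paper's proof essentially step for step: the same mutation sequences ${\bf a}=[1,2,3]^2$ and ${\bf b}=[2,3,4]^2$, linearization on small cones about fixed rays $v_{\bf a},v_{\bf b}$ where the operators decompose as identity plus a $2\times 2$ Jordan block with eigenvalue one, the sign of the functional $\kappa$ splitting each cone into $X^{\pm}$, Maple-verified cross-inclusions promoted by continuity to high powers, and Corollary~\ref{cor:ping-pong} yielding a free subgroup of the mapping class group. The argument is correct and is the paper's argument.
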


\begin{corollary}\label{cor:E_6^11} The cluster algebra of type $E_6^{(1,1)}$ has exponential growth.
\end{corollary}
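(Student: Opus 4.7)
The plan is to transfer Lemma~\ref{lem:G_2*+} across the unfolding relation recalled in Section~\ref{unf}. Let $B$ denote the exchange matrix of $G_2^{(*,+)}$ (of size $4\times 4$) and $C$ the exchange matrix of $E_6^{(1,1)}$ (of size $6\times 6$); by~\cite{FST2}, $C$ is an unfolding of $B$. Each vertex $i\in\{1,2,3,4\}$ of the $G_2^{(*,+)}$ diagram corresponds to a subset $E_i$ of vertices of the $E_6^{(1,1)}$ diagram, and each single mutation $\mu_i$ on $B$ corresponds to a composite mutation $\h\mu_i=\prod_{\hat\imath\in E_i}\mu_{\hat\imath}$ on $C$, which is a product of at most $d:=\max_i|E_i|$ single mutations on $C$.

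The first step is to check that the two mapping class elements ${\bf a}=[1,2,3]^2$ and ${\bf b}=[2,3,4]^2$ built in the proof of Lemma~\ref{lem:G_2*+} lift to elements $\h{\bf a}, \h{\bf b}$ of the mapping class group of $\A(C)$; that is, the composite mutation sequences $\h{\bf a}, \h{\bf b}$ preserve the initial exchange matrix $C$. This should follow from the unfolding compatibility together with the fact that ${\bf a}, {\bf b}$ preserve $B$, and can be confirmed by a direct computation on the explicit unfolding of $G_2^{(*,+)}$ recorded in~\cite{FST2}.

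The second step is to show that $\h{\bf a}$ and $\h{\bf b}$ generate a free subgroup of rank $2$ in the mapping class group of $\A(C)$; by the reasoning at the start of Section~\ref{sec:ping-pong}, this forces $\A(C)$ to have exponential growth. I would exploit the natural linear map $\pi:\R^6\to\R^4$ coming from the unfolding, which intertwines the $g$-vector action of $\h\mu_i$ on $\R^6$ with that of $\mu_i$ on $\R^4$. The invariant cones $X^{\pm}_{\bf a}(\epsilon), X^{\pm}_{\bf b}(\epsilon)\subset\R^4$ built in the proof of Lemma~\ref{lem:G_2*+} should then pull back via $\pi$ to subsets of $\R^6$ on which the hypotheses of Corollary~\ref{cor:ping-pong} hold for the infinite cyclic subgroups $\langle \h{\bf a}^N \rangle$ and $\langle \h{\bf b}^N\rangle$, provided $N$ is chosen sufficiently large.

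The main obstacle is ensuring that the $g$-vector action of $\h{\bf a}$ (respectively $\h{\bf b}$) remains piecewise linear on the pulled-back cones in the precise manner needed to sustain the asymptotic eigenvector analysis of Section~\ref{subsec:G_2*+}. If this projection argument turns out to be delicate, a fallback is to sidestep $\pi$ altogether and repeat the ping-pong directly in $\R^6$: locate an eigenvector of the linearization of $\h{\bf a}$ (and of $\h{\bf b}$), construct analogous cones $C_{\h{\bf a}}(\epsilon), C_{\h{\bf b}}(\epsilon)\subset\R^6$ as in Section~\ref{subsec:G_2*+}, and verify by direct computation the displacement estimates needed to invoke Corollary~\ref{cor:ping-pong}. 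This is a slightly larger but entirely analogous computation to the one already carried out for $G_2^{(*,+)}$, and its feasibility is essentially guaranteed by the fact that $\h{\bf a}, \h{\bf b}$ were lifted from elements already known to satisfy ping-pong on the quotient $\R^4$.
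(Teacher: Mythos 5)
Your proposal follows essentially the same route as the paper: the paper's proof is simply the observation that $E_6^{(1,1)}$ is an unfolding of $G_2^{(*,+)}$, so mutation sequences lift to composite mutation sequences and the mapping class group of $E_6^{(1,1)}$ contains that of $G_2^{(*,+)}$ (hence a free group of rank $2$) as a subgroup, giving exponential growth by Lemma~\ref{lem:G_2*+}. The additional machinery you describe --- re-running the ping-pong argument in $\R^6$ or pulling back the cones along a projection $\pi$ --- is not needed and is not carried out in the paper, since the subgroup containment already suffices.
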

\begin{proof} The exchange matrix with diagram $E_6^{(1,1)}$ is an unfolding of the exchange matrix with diagram $G_2^{(*,+)}$.  In particular, any mutation in  $G_2^{(*,+)}$ is lifted to a sequence of mutations of $E_6^{(1,1)}$, and the mapping class group of $E_6^{(1,1)}$ contains the mapping class group of $G_2^{(*,+)}$ as a subgroup. Hence,  the growth of $E_6^{(1,1)}$ is exponential.
\end{proof}

\subsection{$G_2^{(*,*)}$ and its unfolding $E_8^{(1,1)}$}
\label{subsec:G_2**}

There are two distinct skew-symmetrizable matrices with diagram $G_2^{(*,*)}$, which are denoted by $G_2^{(3,3)}$ and $G_2^{(1,1)}$ (see~\cite[Table~6.3]{FST2}). We will prove that cluster algebras corresponding to matrices $G_2^{(1,1)}$ and $G_2^{(3,3)}$ have exponential growth.  The considerations are almost identical: one needs to take the same sequences of mutations, but different vectors $v_{\bf a},v_{\bf b},\kappa_{\bf a},\kappa_{\bf b}$. We will indicate below details that differ. Then exponential growth of the unfolding $E_8^{(1,1)}$ of $G_2^{(1,1)}$ follows.

The reasoning is similar to that from Section~\ref{subsec:G_2*+}. The following  diagram represents $G_2^{(1,1)}$, see Fig.~\ref{fig:G_2**}. The diagram of $G_2^{(3,3)}$ is obtained by reversing the orientation of all the arrows.

%FIGURE

\begin{figure}[!h]
\begin{center}
\psfrag{4_}{\scriptsize $4$}
\psfrag{1}{\small $1$}
\psfrag{2}{\small $2$}
\psfrag{3}{\small $3$}
\psfrag{4}{\small $4$}
\psfrag{3,1}{\scriptsize $3,1$}
\psfrag{1,3}{\scriptsize $1,3$}
\epsfig{file=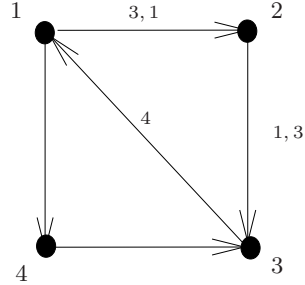,width=0.25\linewidth}
\caption{Diagram for $G_2^{(1,1)}$}
\label{fig:G_2**}
\end{center}
\end{figure}

Set
${\bf a}=[4, 1, 2]^4$, $v_{\bf a}=(-2,1,0,1)$ ($v_{\bf a}=(2,-3,0,-1)$ for $G_2^{(3,3)}$),
and ${\bf b}=[4,3,2]^4$, $v_{\bf b}=(0,-1,2,-1)$ ($v_{\bf b}=(0,3,-2,1)$ for $G_2^{(3,3)}$).

Define cones $C_{\bf a}(\epsilon)$ and $C_{\bf b}(\epsilon)$ as above.

 For small $\bar z=(z_1,z_2,z_3,z_4)$  define the $g$-vector $v_z=v_{\bf a}+\bar z$. Then
 \begin{equation}\label{eq:W4a}
{\bf a}(v_z)=v_{\bf a}+(13 z_1+18 z_2+6 z_4,-6 z_1-8 z_2-3 z_4,z_3,-6 z_1-9 z_2-2 z_4).
\end{equation}

For $G_2^{(3,3)}$ we have
 \begin{equation}\label{eq:W4aa}
{\bf a}(v_z)=v_{\bf a}+(13 z_1+6 z_2+6 z_4,-18 z_1-8 z_2-9 z_4,z_3,-6 z_1-3 z_2-2 z_4).
\end{equation}

As above, the action of ${\bf a}$ on $E_G$ is a linear transformation, which is a direct sum of the identity operator on 2-dimensional space and a $2\times 2$ Jordan  block with unit eigenvalue  and eigenvector $v_{\bf a}$.

Define
$\kappa_{\bf a}(v_z)=6z_1+9 z_2+3 z_4$ ($\kappa_{\bf a}(v_z)=6z_1+3 z_2+3 z_4$ for $G_2^{(3,3)}$).  Then for any positive integer $r$ we have ${\bf a}^r(v_z)=v_z-r\kappa_{\bf a}(v_z) v_{\bf a}$ (${\bf a}^r(v_z)=v_z+r\kappa_{\bf a}(v_z) v_{\bf a}$ in the case of $G_2^{(3,3)}$).

Define $X^+_{\bf a}(\epsilon)$, $X^-_{\bf a}(\epsilon)$, $X^+_{\bf b}(\epsilon)$, and $X^-_{\bf b}(\epsilon)$ as above.

 From Equation~\ref{eq:W4a} (~\ref{eq:W4aa}) we see that each $X^\mp_{\bf a}(\epsilon)$ is invariant under ${\bf a^{\pm 1}}$ for $\epsilon$ small enough. If $v_z\in X^-_{\bf a}(\epsilon)$ then
$\lim_{n\to \infty} \dfrac{{\bf a}^n v_z}{|{\bf a}^n v_z|}=\dfrac{v_{\bf a}}{|v_{\bf a}|}$.
 If $v_z\in X^+_{\bf a}(\epsilon)$ then
$\lim_{n\to \infty} \dfrac{{\bf a}^{-n} v_z}{|{\bf a}^{-n} v_z|}=\dfrac{v_{\bf a}}{|v_{\bf a}|}$.

 For sufficiently small $\bar z=(z_1,z_2,z_3,z_4)$ define the $g$-vector $v_z=v_{\bf b}+\bar z$.
Then
\begin{equation}\label{eq:W4b}
{\bf b}(v_z)=v_{\bf b}+(z_1,-8 z_2-6 z_3-3 z_4,18 z_2+13 z_3+6 z_4,-9 z_2-6 z_3-2 z_4)
\end{equation}
For $G_2^{(3,3)}$ we have
\begin{equation}\label{eq:W4bb}
{\bf b}(v_z)=v_{\bf b}+(z_1,-8 z_2-18 z_3-9 z_4,6 z_2+13 z_3+6 z_4,-3 z_2-6 z_3-2 z_4)
\end{equation}

The corresponding linear transformation is a direct sum of an identity operator on 2-dimensional space and a Jordan  block of size $2\times 2$ with eigenvalue one
and the corresponding eigenvector $v_{\bf b}$.
If
$\kappa_{\bf b}=9z_2+6 z_3+3 z_4$ ($\kappa_{\bf b}=3 z_2+6 z_3+3 z_4$ for $G_2^{(3,3)}$) then ${\bf b}^r(v_z)=v_z+ r\kappa_{\bf b}(v_z) v_{\bf b}$ (${\bf b}^r(v_z)=v_z- r\kappa_{\bf b}(v_z) v_{\bf b}$ for $G_2^{(3,3)}$).

Note now, that ${\bf b}(X_{\bf b}^+(\epsilon))\subset X_{\bf b}^+(\epsilon)$, and ${\bf b^{-1}}(X_{\bf b}^-(\epsilon))\subset X_{\bf b}^-(\epsilon)$. Furthermore, as in the previous case, we have
\begin{center}
$\lim_{n\to \infty} \dfrac{{\bf b}^n v_z}{|{\bf b}^n v_z|}=\dfrac{v_{\bf b}}{|v_{\bf b}|}$ for $v_z\in X^+_{\bf b}(\epsilon)$,\\
$\lim_{n\to \infty} \dfrac{{\bf b}^{-n} v_z}{|{\bf b}^{-n} v_z|}=\dfrac{v_{\bf b}}{|v_{\bf b}|}$ for $v_z\in X^-_{\bf b}(\epsilon)$.
\end{center}
Again, ${\bf b}^{\pm 10}(v_{\bf a})\in X^\pm_{\bf b}(\epsilon)$, ${\bf a}^{\pm 10}(v_{\bf b})\in X^\mp_{\bf a}(\epsilon)$, where $\epsilon$ is small enough for Equations~\ref{eq:W4a} and~\ref{eq:W4b}  to hold.

Therefore, we can conclude that for any  $\epsilon>0$ small enough we can find
a sufficiently large positive integer $N_\epsilon$
such that ${\bf b}^{N_\epsilon}(X_{\bf a}(\epsilon))\subset X^+_{\bf b}(\epsilon)$, ${\bf b}^{-N_\epsilon}(X_a(\epsilon))\subset X^-_{\bf b}(\epsilon)$,
${\bf a}^{N_\epsilon}(X_{\bf b}(\epsilon))\subset X^-_{\bf a}(\epsilon)$, ${\bf a}^{-N_\epsilon}(X_b(\epsilon))\subset X^+_{\bf a}(\epsilon)$.

Now we apply Corollary~\ref{cor:ping-pong}  to get the result.

\begin{lemma}\label{lem:G_233} Cluster algebras of type $G_2^{(1,1)}$ and $G_2^{(3,3)}$  have exponential growth.
\end{lemma}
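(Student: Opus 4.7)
The plan is to mirror the strategy of Lemma~\ref{lem:G_2*+}: exhibit two elements of the mapping class group whose induced actions on the space $E_G$ of $g$-vectors satisfy the hypotheses of the ping-pong lemma (Corollary~\ref{cor:ping-pong}), yielding a free subgroup of rank two. Exponential growth of the mapping class group then forces exponential growth of the cluster algebra, as explained in Section~\ref{sec:exp}.

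First I would take the two candidate mutation sequences ${\bf a}=[4,1,2]^4$ and ${\bf b}=[4,3,2]^4$ and verify by direct computation that they preserve the diagram of $G_2^{(1,1)}$ (and, under reversal of arrow orientations, of $G_2^{(3,3)}$), so that both ${\bf a}$ and ${\bf b}$ belong to the mapping class group in either case. Next, for each matrix I locate a $g$-vector $v_{\bf a}$ fixed by the linear action of ${\bf a}$ near it: for $G_2^{(1,1)}$ I take $v_{\bf a}=(-2,1,0,1)$ and $v_{\bf b}=(0,-1,2,-1)$, and for $G_2^{(3,3)}$ the sign-reversed variants $v_{\bf a}=(2,-3,0,-1)$ and $v_{\bf b}=(0,3,-2,1)$. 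The central computation is to show that on a small cone $C_{\bf a}(\epsilon)$ around the ray through $v_{\bf a}$ the piecewise linear action coming from~(\ref{eq:g-mutation}) is in fact linear and, in suitable coordinates, is a direct sum of the identity on a $2$-dimensional subspace with a single $2\times 2$ Jordan block with eigenvalue $1$ and eigenvector $v_{\bf a}$; the same structure must be obtained for ${\bf b}$ near $v_{\bf b}$.

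From this Jordan structure I extract an explicit linear functional $\kappa_{\bf a}$ (respectively $\kappa_{\bf b}$) so that ${\bf a}^r(v_z)=v_z+r\,\kappa_{\bf a}(v_z)\,v_{\bf a}$ (up to a sign depending on the diagram), and split the cone into the half-cones $X^{\pm}_{\bf a}(\epsilon)=C_{\bf a}(\epsilon)\cap\{\pm\kappa_{\bf a}>0\}$, similarly for $\bf b$. I then check the three standard facts: each half-cone is invariant under the appropriate power of the generator, normalised iterates ${\bf a}^{\pm n}v_z/|{\bf a}^{\pm n}v_z|$ converge to $\pm v_{\bf a}/|v_{\bf a}|$ on the relevant half-cone (analogously for $\bf b$), and a short symbolic computation, e.g.\ in Maple, confirms that ${\bf b}^{\pm 10}(v_{\bf a})\in X^{\pm}_{\bf b}(\epsilon)$ and ${\bf a}^{\pm 10}(v_{\bf b})\in X^{\mp}_{\bf a}(\epsilon)$ once $\epsilon$ is small enough for the linearisation to be valid. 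Continuity of the piecewise-linear maps ${\bf a}^{\pm N}$, ${\bf b}^{\pm N}$ then furnishes an integer $N$ such that ${\bf a}^{\pm N}(X_{\bf b}(\epsilon))\subset X^{\pm}_{\bf a}(\epsilon)$ and ${\bf b}^{\pm N}(X_{\bf a}(\epsilon))\subset X^{\pm}_{\bf b}(\epsilon)$, which is exactly the hypothesis of Corollary~\ref{cor:ping-pong} for the infinite cyclic subgroups $\langle {\bf a}^N\rangle$, $\langle {\bf b}^N\rangle$ acting on the disjoint sets $X_{\bf a}(\epsilon)$ and $X_{\bf b}(\epsilon)$.

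The main obstacle I anticipate is the parallel bookkeeping for two distinct skew-symmetrizable matrices sharing the same diagram: the entries $b_{ij}$ enter asymmetrically into~(\ref{eq:g-mutation}), so the explicit matrices for the linear action of ${\bf a}$ and ${\bf b}$ near their invariant rays, as well as the vectors $v_{\bf a}, v_{\bf b}$ and functionals $\kappa_{\bf a}, \kappa_{\bf b}$, differ between $G_2^{(1,1)}$ and $G_2^{(3,3)}$. In each of the two cases one must check independently that the resulting linear operator is conjugate to identity $\oplus$ single Jordan block, and nothing in principle guarantees this clean structure; it is conceivable that for the wrong choice of $v$ one gets two Jordan blocks or nontrivial eigenvalues, which would break the convergence step. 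So the hard part is really the explicit verification of the Jordan form (and of the resulting sign of $\kappa$) for both skew-symmetrizations at once. Once this is done, exponential growth of $E_8^{(1,1)}$ follows, exactly as in Corollary~\ref{cor:E_6^11}, from the fact that $E_8^{(1,1)}$ is an unfolding of $G_2^{(1,1)}$ and hence its mapping class group contains that of $G_2^{(1,1)}$.
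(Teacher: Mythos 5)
Your proposal follows the paper's own argument essentially verbatim: the same mutation sequences ${\bf a}=[4,1,2]^4$, ${\bf b}=[4,3,2]^4$, the same invariant vectors $v_{\bf a}$, $v_{\bf b}$ for both skew-symmetrizations, the same cone/Jordan-block/$\kappa$-functional analysis, and the same application of Corollary~\ref{cor:ping-pong}. The only cosmetic difference is the bookkeeping of which half-cone is invariant under ${\bf a}$ versus ${\bf a}^{-1}$ (the paper has ${\bf a}^{N_\epsilon}(X_{\bf b})\subset X^-_{\bf a}$ for $G_2^{(1,1)}$ due to the minus sign in ${\bf a}^r(v_z)=v_z-r\kappa_{\bf a}(v_z)v_{\bf a}$), which you correctly flag as a sign depending on the diagram and which does not affect the ping-pong conclusion.
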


Equivalently,

\begin{corollary}\label{lem:G_2**} Cluster algebras with diagram of type $G_2^{(*,*)}$ have exponential growth.
\end{corollary}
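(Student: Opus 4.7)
The plan is to deduce this statement directly from Lemma~\ref{lem:G_233}. Recall that at the start of Section~\ref{subsec:G_2**} it was noted (citing \cite[Table~6.3]{FST2}) that up to simultaneous row/column relabeling there are exactly two skew-symmetrizable integer matrices whose diagram is $G_2^{(*,*)}$, namely $G_2^{(1,1)}$ and $G_2^{(3,3)}$. Since the growth rate of a cluster algebra depends only on the underlying exchange matrix (up to isomorphism), and isomorphic cluster algebras have identical exchange graphs, it suffices to verify exponential growth for each of these two representatives.

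This, however, is precisely the content of Lemma~\ref{lem:G_233}, which was established via an application of the ping-pong lemma (Corollary~\ref{cor:ping-pong}) to the two mutation sequences ${\bf a}=[4,1,2]^4$ and ${\bf b}=[4,3,2]^4$, with appropriately chosen eigenvectors $v_{\bf a}$, $v_{\bf b}$ and linear functionals $\kappa_{\bf a}$, $\kappa_{\bf b}$ selected separately for $G_2^{(1,1)}$ and $G_2^{(3,3)}$. Combining these two cases exhausts the possibilities for a cluster algebra whose diagram is $G_2^{(*,*)}$.

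Since there is no additional case analysis beyond the two matrices already treated, the only conceivable obstacle would be the assertion that every cluster algebra with diagram $G_2^{(*,*)}$ has one of these two matrices as an exchange matrix; but this is exactly the classification statement recorded above. Hence the corollary is immediate.
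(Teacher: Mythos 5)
Your proposal matches the paper exactly: the corollary is stated in the paper as an immediate reformulation of Lemma~\ref{lem:G_233} (introduced by the word ``Equivalently''), resting on the fact that $G_2^{(1,1)}$ and $G_2^{(3,3)}$ are the only two skew-symmetrizable matrices with diagram $G_2^{(*,*)}$. Your write-up just makes this one-line deduction explicit.
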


The fact that $E_8^{(1,1)}$ is an unfolding of $G_2^{(1,1)}$ implies the following corollary.

\begin{corollary}\label{cor:E_8^11} The cluster algebra of type $E_8^{(1,1)}$ has exponential growth.
\end{corollary}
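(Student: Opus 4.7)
The plan is to mimic verbatim the argument used for Corollary~\ref{cor:E_6^11}, replacing the pair $(G_2^{(*,+)}, E_6^{(1,1)})$ by $(G_2^{(1,1)}, E_8^{(1,1)})$. Namely, I would invoke the fact (established in~\cite{FST2}, recalled above in Section~\ref{subsec:G_2**}) that the exchange matrix whose diagram is $E_8^{(1,1)}$ is an unfolding of the exchange matrix whose diagram is $G_2^{(1,1)}$. By the definition of unfolding in Section~\ref{unf}, every mutation $\mu_i$ of the $G_2^{(1,1)}$-matrix $B$ lifts to the composite mutation $\h\mu_i=\prod_{\hat\imath\in E_i}\mu_{\hat\imath}$ of the $E_8^{(1,1)}$-matrix $C$, and composites of such lifts take matrices mutation-equivalent to $B$ to matrices mutation-equivalent to $C$. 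In particular, a sequence of mutations preserving $B$ lifts to a sequence preserving $C$, so that lifting gives an injective homomorphism of the mapping class group of $G_2^{(1,1)}$ into the mapping class group of $E_8^{(1,1)}$.

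Combined with Lemma~\ref{lem:G_233}, which asserts the exponential growth of the mapping class group of $G_2^{(1,1)}$ (produced there by the ping-pong argument yielding a free group of rank two), this embedding forces the mapping class group of $E_8^{(1,1)}$ to contain a free subgroup of rank two as well, hence to grow exponentially. Since distinct elements of the mapping class group act on the initial seed to produce distinct clusters, the exchange graph of $E_8^{(1,1)}$ inherits exponential growth, which is exactly the statement of the corollary. There is no real obstacle: the entire argument is a one-line transfer through the unfolding, and all the heavy lifting (finding the two explicit mutation sequences ${\bf a}=[4,1,2]^4$ and ${\bf b}=[4,3,2]^4$ together with the $g$-vector cones satisfying the ping-pong hypotheses) has already been carried out at the $G_2^{(1,1)}$-level in Section~\ref{subsec:G_2**}.
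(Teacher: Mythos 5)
Your proposal is correct and coincides with the paper's own argument: the paper derives Corollary~\ref{cor:E_8^11} precisely from the fact that $E_8^{(1,1)}$ is an unfolding of $G_2^{(1,1)}$, so that the mapping class group of $G_2^{(1,1)}$ (shown to contain a free group of rank two in Lemma~\ref{lem:G_233}) embeds into that of $E_8^{(1,1)}$, exactly as in the proof of Corollary~\ref{cor:E_6^11}.
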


\subsection{$F_4^{(*,+)}$, its unfolding $E_7^{(1,1)}$, and $F_4^{(*,*)}$}

In this section we will prove exponential growth for $F_4^{(*,+)}$, its unfolding $E_7^{(1,1)}$, and $F_4^{(*,*)}$.
The arguments follow almost literally the arguments of Sections~\ref{subsec:G_2*+} and~\ref{subsec:G_2**}.
Therefore we describe below only the differences between the cases in question and the cases $G_2^{(*,*)}$, $G_2^{(*,+)}$.

The  diagram representing $F_4^{(*,+)}$ is shown in Fig.~\ref{fig:F_4*+} (again, labels show the choice of one of the two matrices, which differ by permutations of rows and columns only).

%FIGURE

\begin{figure}[!h]
\begin{center}
\psfrag{1}{\small 1}
\psfrag{2}{\small 2}
\psfrag{3}{\small 3}
\psfrag{4}{\small 4}
\psfrag{5}{\small 5}
\psfrag{6}{\small 6}
\psfrag{1,2}{\scriptsize 1,2}
\psfrag{2,1}{\scriptsize 2,1}
\epsfig{file=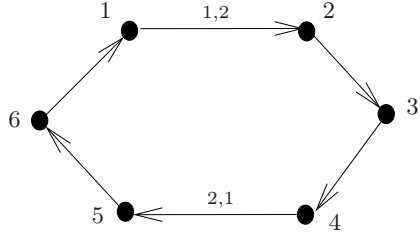,width=0.35\linewidth}
\caption{Diagram for $F_4^{(*,+)}$}
\label{fig:F_4*+}
\end{center}
\end{figure}

\noindent
Let
${\bf a}=[5,4,3,2, 1]$
and ${\bf b}=[2,1,6,5,4]$,
$v_{\bf a}=(-1,0,0,0,1,0)$, $v_{\bf b}=(0,1,0,-1,0,0)$.

In a neighborhood of $v_{\bf a}$ set $v_z=v_{\bf a}+(z_1,z_2,z_3,z_4,z_5,z_6)$. Then
\begin{equation}\label{eq:Y6a}
{\bf a}(v_z)=v_{\bf a}+
(-z_1-z_2-z_3-z_4-2 z_5,2 z_1+z_2+z_3+z_4+2 z_5,z_2,z_3,z_4+z_5,z_6)
\end{equation}
for sufficiently small $(z_1,z_2,z_3,z_4,z_5,z_6)$.

 Note that the Jordan form of the linear operator ${\bf a}$ is a direct sum of an identity operator, negative one times an identity operator, a rotation operator of order four (with eigenvalues of magnitude one) and a $2\times 2$
Jordan  block with eigenvalue one
whose eigenvector is $v_{\bf a}$.

Computing coordinates of the corresponding transformation matrix we set
$\kappa_{\bf a}=\frac{1}{2}(z_1+z_2+z_3+z_4+z_5)$. Then ${\bf a}^r(v_z)=v_z+ r\kappa_{
\bf a}(v_z) v_{\bf a}$ whenever $r$ is a multiple of four.

In a neighborhood of $v_{\bf b}$ we denote $v_z=v_{\bf b}+(z_1,z_2,z_3,z_4,z_5,z_6)$. Then
\begin{equation}\label{eq:Y6b}
{\bf b}(v_z)=v_z+
(z_6,2 z_1+z_2,z_3,-2 z_1-2 z_2-z_4-2 z_5-2 z_6,z_1+z_2+z_4+z_5+z_6,z_5)
\end{equation}
for sufficiently small $(z_1,z_2,z_3,z_4,z_5,z_6)$.

Similarly, the linear transformation ${\bf b}$ is a direct sum of the $2\times 2$ Jordan  block with eigenvalue one
and an identity operator, negative one times an identity operator, and a
rotation of order four. The eigenvector corresponding to the Jordan block is $v_{\bf b}$.

Set $\kappa_{\bf b}= z_1+ (1/2) z_2+ (1/2) z_4+ z_5+ z_6$. Then ${\bf b}^r(v_z)=v_z+ r \kappa_{\bf b}(v_z) v_{\bf b}$ whenever $r$ is a multiple of four.

As above, using Corollary~\ref{cor:ping-pong} we conclude:

\begin{lemma}\label{lem:F_4*+}
The cluster algebra of type $F_4^{(*,+)}$ has exponential growth.
\end{lemma}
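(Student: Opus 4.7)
The plan is to mimic the arguments of Sections~\ref{subsec:G_2*+} and~\ref{subsec:G_2**}, applying the ping-pong Corollary~\ref{cor:ping-pong} to two infinite cyclic subgroups generated by suitable powers of ${\bf a}$ and ${\bf b}$ acting on the space $E_G$ of $g$-vectors. The key structural input has already been assembled above: each of ${\bf a}$, ${\bf b}$ acts on a neighborhood of its distinguished eigenvector by a linear map whose Jordan form is a direct sum of a single $2\times 2$ block with eigenvalue $1$ (eigenvector $v_{\bf a}$, respectively $v_{\bf b}$), an identity block, a block of $-1$, and an order-four rotation. Restricting to exponents divisible by four annihilates the rotational and sign components, leaving the pure shear formulas ${\bf a}^{4k}(v_z)=v_z+4k\,\kappa_{\bf a}(v_z)\,v_{\bf a}$ and ${\bf b}^{4k}(v_z)=v_z+4k\,\kappa_{\bf b}(v_z)\,v_{\bf b}$.

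First, I would introduce the cones
\[
C_{\bf a}(\epsilon)=\{\alpha v_{\bf a}+w\mid\alpha>0,\ w\in v_{\bf a}^\perp,\ |w|<\epsilon\alpha|v_{\bf a}|\},
\]
and $C_{\bf b}(\epsilon)$ analogously, split each into $X^\pm_{\bf a}(\epsilon)=C_{\bf a}(\epsilon)\cap\{\pm\kappa_{\bf a}>0\}$ and $X^\pm_{\bf b}(\epsilon)=C_{\bf b}(\epsilon)\cap\{\pm\kappa_{\bf b}>0\}$. Combining Equations~\ref{eq:Y6a} and~\ref{eq:Y6b} with the shear formulas, for $\epsilon$ small enough one verifies as before that ${\bf a}^{\pm 4k}$ preserves $X^\pm_{\bf a}(\epsilon)$, that the normalised orbits ${\bf a}^{\pm 4k}v_z/|{\bf a}^{\pm 4k}v_z|$ converge to $v_{\bf a}/|v_{\bf a}|$ for $v_z\in X^\pm_{\bf a}(\epsilon)$, and symmetrically for ${\bf b}$.

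Next, a direct numerical check of the kind performed in the previous subsections shows ${\bf b}^{\pm 4M}(v_{\bf a})\in C_{\bf b}(\epsilon)$ with the correct sign of $\kappa_{\bf b}$ for some fixed $M$, and symmetrically ${\bf a}^{\pm 4M}(v_{\bf b})\in C_{\bf a}(\epsilon)$ with the correct sign of $\kappa_{\bf a}$. By continuity of ${\bf a}^{\pm 4N}$ and ${\bf b}^{\pm 4N}$ on $E_G$, this upgrades to a single integer $N$ (divisible by four) and $\epsilon>0$ such that
\[
{\bf a}^{\pm 4N}(X_{\bf b}(\epsilon))\subset X^\pm_{\bf a}(\epsilon),\qquad {\bf b}^{\pm 4N}(X_{\bf a}(\epsilon))\subset X^\pm_{\bf b}(\epsilon),
\]
with $X_{\bf a}=X^+_{\bf a}(\epsilon)\cup X^-_{\bf a}(\epsilon)$ and similarly $X_{\bf b}$. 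The infinite cyclic subgroups $H_{\bf a}=\langle{\bf a}^{4N}\rangle$ and $H_{\bf b}=\langle{\bf b}^{4N}\rangle$, together with the disjoint subsets $X_{\bf a}$ and $X_{\bf b}$ of $E_G$, then satisfy the hypotheses of Corollary~\ref{cor:ping-pong} and generate a free subgroup of rank two in the mapping class group of $F_4^{(*,+)}$. Exponential growth of that free subgroup forces exponential growth of the cluster algebra, since distinct elements of the mapping class group produce distinct clusters from the initial seed.

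The only genuine obstacle, as opposed to a routine repetition of the $G_2$ arguments, is the order-four rotational component present in both ${\bf a}$ and ${\bf b}$: without divisibility of the exponents by four the invariant cones $X^\pm_{\bf a}(\epsilon)$, $X^\pm_{\bf b}(\epsilon)$ are rotated through the eigenvectors rather than preserved, and cone inclusions would have to be replaced by inclusions into finite unions of rotated cones. Working throughout with multiples of four, as done above, sidesteps this entirely, so no further substantive argument is needed.
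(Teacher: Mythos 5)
Your proposal is correct and follows essentially the same route as the paper: the paper's treatment of $F_4^{(*,+)}$ consists exactly of the data you use (the sequences ${\bf a}=[5,4,3,2,1]$, ${\bf b}=[2,1,6,5,4]$, the vectors $v_{\bf a},v_{\bf b}$, the Jordan decompositions with an order-four rotational part, and the shear formulas valid for exponents divisible by four), followed by the same cone construction and application of Corollary~\ref{cor:ping-pong} as in the $G_2$ cases. Your explicit remark that restricting to multiples of four is what neutralizes the rotation and $-1$ blocks is precisely the point the paper is making by stating the shear formula only for such exponents.
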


\begin{corollary}\label{cor:E_7^11} The cluster algebra of type $E_7^{(1,1)}$ has exponential growth.
\end{corollary}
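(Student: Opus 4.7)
The plan is to follow exactly the template established by Corollary~\ref{cor:E_6^11} and Corollary~\ref{cor:E_8^11}, replacing $(G_2^{(*,+)}, E_6^{(1,1)})$ by $(F_4^{(*,+)}, E_7^{(1,1)})$. The key ingredient is that $E_7^{(1,1)}$ arises as an unfolding of $F_4^{(*,+)}$ in the sense of Section~\ref{unf}; this is precisely one of the unfoldings exhibited in~\cite{FST2} in the classification of skew-symmetrizable mutation-finite diagrams, so it can be cited rather than reverified here.

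With the unfolding in hand, the argument is structural. Let $B$ be an exchange matrix with diagram $F_4^{(*,+)}$ and $C$ its $E_7^{(1,1)}$-unfolding, with index sets $E_1,\dots,E_4$ partitioning the vertex set of $C$. By definition of unfolding, every mutation $\mu_k$ of $B$ lifts to the composite mutation $\widehat\mu_k = \prod_{\widehat\imath\in E_k} \mu_{\widehat\imath}$ of $C$, and these composite mutations commute within each $E_k$ so the lift is well-defined. Consequently, any sequence of mutations $\mu_{i_r}\cdots\mu_{i_1}$ that preserves $B$ lifts to the sequence $\widehat\mu_{i_r}\cdots\widehat\mu_{i_1}$ preserving $C$, giving a group homomorphism from the mapping class group of $\A(B)$ to that of $\A(C)$. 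One checks (as is standard in the unfolding setup used throughout~\cite{FST2,FST3}) that this homomorphism is injective, since the lifts of distinct nontrivial elements produce distinct clusters of $\A(C)$: the projection of the $C$-cluster data onto the $B$-cluster data is compatible with the mutation dynamics, so a nontrivial element of the mapping class group of $\A(B)$ cannot have a trivial lift.

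Therefore the mapping class group of the cluster algebra with diagram $E_7^{(1,1)}$ contains a copy of the mapping class group of the cluster algebra with diagram $F_4^{(*,+)}$. By Lemma~\ref{lem:F_4*+} the latter contains a free subgroup on two generators, hence so does the former. As explained at the start of Section~\ref{sec:ping-pong}, distinct elements of the mapping class group produce distinct clusters from the initial one, so exponential growth of the mapping class group forces exponential growth of the exchange graph. This gives the desired conclusion for $E_7^{(1,1)}$.

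The only place where anything could go wrong is the injectivity of the lift, but this is not really an obstacle: it is exactly the principle already invoked tacitly in Corollaries~\ref{cor:E_6^11} and~\ref{cor:E_8^11}, and in fact the proof there says no more than ``the mapping class group of the unfolding contains the mapping class group of the folded diagram as a subgroup.'' So the entire argument here is a one-line invocation of the unfolding $F_4^{(*,+)}\rightsquigarrow E_7^{(1,1)}$ together with Lemma~\ref{lem:F_4*+}, mirroring the two earlier corollaries verbatim.
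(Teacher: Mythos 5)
Your proposal is correct and follows exactly the paper's own argument: the paper's proof of this corollary is the single line ``$E_7^{(1,1)}$ is an unfolding of $F_4^{(*,+)}$,'' relying on Lemma~\ref{lem:F_4*+} and the same lifting of mutations to the unfolding that was spelled out for Corollary~\ref{cor:E_6^11}. Your more detailed justification of why the mapping class group embeds is a faithful expansion of what the paper leaves implicit.
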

\begin{proof}  $E_7^{(1,1)}$ is an unfolding of $F_4^{(*,+)}$.
\end{proof}

%--------------------------------------------------------------

Finally, we show that the growth of cluster algebras with diagram $F_4^{(*,*)}$ is exponential. As in the case of $G_2^{(*,*)}$, there are two distinct skew-symmetrizable matrices with this diagram, which correspond to extended affine root systems  $F_4^{(1,1)}$ and $F_4^{(2,2)}$ (see~\cite{FST2}). Below we consider $F_4^{(1,1)}$. The considerations for $F_4^{(2,2)}$ are almost identical; we give the differing details in parentheses.

  The  diagram representing $F_4^{(1,1)}$ is shown in Fig.~\ref{fig:F_4**}. The diagram representing $F_4^{(2,2)}$ is obtained by reversing the orientations of all the arrows.

%FIGURE

\begin{figure}[!h]
\begin{center}
\psfrag{1}{\small 1}
\psfrag{2}{\small 2}
\psfrag{3}{\small 3}
\psfrag{4}{\small 4}
\psfrag{5}{\small 5}
\psfrag{6}{\small 6}
\psfrag{1,2}{\scriptsize 1,2}
\psfrag{2,1}{\scriptsize 2,1}
\epsfig{file=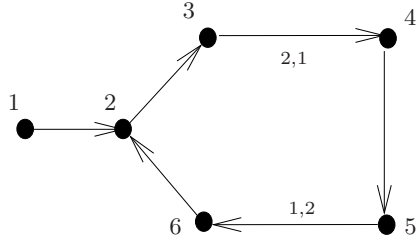,width=0.35\linewidth}
\caption{Diagram for  $F_4^{(1,1)}$}
\label{fig:F_4**}
\end{center}
\end{figure}

In both cases we considered the same pair of elements of the mapping class group
${\bf a}=[1,2,3,4,5]^2$ and ${\bf b}=[4,5,6,1,2]^2$.

Then choose
$v_{\bf a}=(-1,-1,-1,1,1,0)$, $v_{\bf b}=(-1/2,1,0,-1/2,-1/2,1/2)$ for $F_4^{(1,1)}$
(for $F_4^{(2,2)}$ \  $v_{\bf a}=(1,1,1,-2,-2,0)$, $v_{\bf b}=(1,-2,0,2,2,-1)$).

In a neighborhood of $v_{\bf a}$, denote $v_z=v_{\bf a}+(z_1,z_2,z_3,z_4,z_5,z_6)$. Then
\begin{equation}\label{eq:Z6a}
{\bf a}(v_z)= v_{\bf a}+
(-z_3-2 z_4,-z_1-z_2-z_3-2 z_4-2 z_5,z_1,z_2+z_3+2 z_4+z_5,z_3+z_4+z_5,z_6)
\end{equation}
for sufficiently small $(z_1,z_2,z_3,z_4,z_5,z_6)$ (for $F_4^{(1,1)}$).

For $F_4^{(2,2)}$ we have
\begin{multline}\label{eq:Z6aa}
{\bf a}(v_z)=v_{\bf a}+ \\ +
(z_3,z_1+z_2+z_3+z_4,z_1+2 z_2+2 z_3+z_4+z_5,-2 z_1-2 z_2-2 z_3-z_4-z_5,-2 z_2-2 z_3-z_4,z_6).
\end{multline}

 The linear operator ${\bf a}$ is a direct sum of a
Jordan  block of size 2 with eigenvalue one and  eigenvector $v_{\bf a}$,
an identity operator, and a rotation of order three.

Define
$\kappa_{\bf a}=\frac{1}{3}z_1+\frac{2}{3} z_2+ z_3+\frac{4}{3} z_4+\frac{2}{3} z_5$ ($\kappa_{\bf a}=\frac{1}{3}z_1+\frac{2}{3} z_2+ z_3+\frac{2}{3} z_4+ \frac{1}{3}z_5$ for $F_4^{(2,2)}$), then ${\bf a}^r(v_z)=v_z+r\kappa_{\bf a}(v_z) v_{\bf a}$
whenever $r$ is a multiple of three.

In a neighborhood of $v_{\bf b}$, denote $v_z=v_{\bf b}+(z_1,z_2,z_3,z_4,z_5,z_6)$. Then
\begin{multline}\label{eq:Z6b}
{\bf b}(v_z)=v_{\bf b}+\\ +
(2 z_5+z_6,-z_2-2 z_4-4 z_5-2 z_6,z_3,z_4+z_5+z_6,z_1+z_2+z_4+2 z_5+z_6,-z_1)
\end{multline}
for sufficiently small $(z_1,z_2,z_3,z_4,z_5,z_6)$.

For $F_4^{(2,2)}$ we have
\begin{multline}\label{eq:Z6bb}
{\bf b}(v_z)= v_{\bf b}+\\ +
(z_5+z_6,-z_2-z_4-2 z_5-2 z_6,z_3,z_4+z_5+2 z_6,2 z_1+2 z_2+z_4+2z_5+2z_6,-z_1).
\end{multline}

 The linear operator ${\bf b}$ is a direct sum of an identity operator, a rotation of order three and  a $2\times 2$
Jordan  block with eigenvalue one and
 eigenvector $v_{\bf b}$.

Define $\kappa_{\bf b}=\frac{2}{3}z_1+ \frac{4}{3} z_2+ \frac{4}{3} z_4+ \frac{8}{3} z_5+ 2 z_6$ ($\kappa_{\bf b}=\frac{1}{3} z_1+ \frac{2}{3} z_2+ \frac{1}{3} z_4+ \frac{2}{3} z_5+ z_6$ for $F_4^{(2,2)}$).
 Then ${\bf b}^r(v_z)=v_z- r\kappa_{\bf b}(v_z) v_{\bf b}$ whenever $r$ is a
multiple of three.

As above, using Corollary~\ref{cor:ping-pong} we conclude

\begin{lemma}\label{lem:F_411}
Cluster algebras of type $F_4^{(1,1)}$ and $F_4^{(2,2)}$ have exponential growth.
\end{lemma}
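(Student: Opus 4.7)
The plan is to mimic the argument used in Sections~\ref{subsec:G_2*+}, \ref{subsec:G_2**}, and for $F_4^{(*,+)}$: starting from the data already assembled above (the mutation sequences ${\bf a}=[1,2,3,4,5]^2$, ${\bf b}=[4,5,6,1,2]^2$; the vectors $v_{\bf a}$, $v_{\bf b}$; the linearizations \eqref{eq:Z6a}--\eqref{eq:Z6bb}; and the functionals $\kappa_{\bf a}$, $\kappa_{\bf b}$), we will produce two infinite cyclic subgroups of the mapping class group that satisfy the hypotheses of Corollary~\ref{cor:ping-pong}. Once this is done, the mapping class group contains a free group on two generators, hence grows exponentially, and therefore so does the cluster algebra. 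Since exponential growth for $F_4^{(1,1)}$ and $F_4^{(2,2)}$ together is precisely exponential growth for all skew-symmetrizable matrices with diagram $F_4^{(*,*)}$, this yields the lemma.

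Fix $\epsilon>0$ small, and for each of the two matrices define the cones
\[
C_{\bf a}(\epsilon)=\Bigl\{\alpha v_{\bf a}+w\;\Big|\;\alpha>0,\;w\in v_{\bf a}^{\perp},\;\tfrac{|w|}{\alpha|v_{\bf a}|}<\epsilon\Bigr\},\qquad
C_{\bf b}(\epsilon)=\Bigl\{\alpha v_{\bf b}+w\;\Big|\;\alpha>0,\;w\in v_{\bf b}^{\perp},\;\tfrac{|w|}{\alpha|v_{\bf b}|}<\epsilon\Bigr\},
\]
and split each according to the sign of the corresponding $\kappa$:
$X^{\pm}_{\bf a}(\epsilon)=C_{\bf a}(\epsilon)\cap\{\pm\kappa_{\bf a}>0\}$ and $X^{\pm}_{\bf b}(\epsilon)=C_{\bf b}(\epsilon)\cap\{\pm\kappa_{\bf b}>0\}$. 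Since the Jordan forms of ${\bf a}$ and ${\bf b}$ displayed above are each a direct sum of an identity block, an order-$3$ rotation, and a $2\times 2$ unipotent block with eigenvector $v_{\bf a}$ (resp.\ $v_{\bf b}$), the iterates ${\bf a}^{3r}$ and ${\bf b}^{3r}$ act on these cones by
\[
{\bf a}^{3r}(v_z)=v_z+3r\,\kappa_{\bf a}(v_z)\,v_{\bf a},\qquad {\bf b}^{3r}(v_z)=v_z\mp 3r\,\kappa_{\bf b}(v_z)\,v_{\bf b},
\]
with the sign chosen according to the formulas for the two matrices. From these explicit expressions, standard estimates (as in Section~\ref{subsec:G_2*+}) give for $\epsilon$ sufficiently small that $X^{+}_{\bf a}(\epsilon)$ is stable under ${\bf a}^{3}$ while $X^{-}_{\bf a}(\epsilon)$ is stable under ${\bf a}^{-3}$, and likewise for ${\bf b}^{\pm 3}$; moreover for $v_z$ in the appropriate half-cone one has
$\lim_{n\to\infty}\tfrac{{\bf a}^{\pm 3n}v_z}{|{\bf a}^{\pm 3n}v_z|}=\tfrac{v_{\bf a}}{|v_{\bf a}|}$ and $\lim_{n\to\infty}\tfrac{{\bf b}^{\pm 3n}v_z}{|{\bf b}^{\pm 3n}v_z|}=\tfrac{v_{\bf b}}{|v_{\bf b}|}$.

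The remaining point, verified by direct Maple computation as in the earlier cases, is that a sufficiently large power sends the eigenvector of the other transformation into the correct half-cone: for some $N$ divisible by $3$ one has ${\bf a}^{\pm N}(v_{\bf b})\in X^{\pm}_{\bf a}(\epsilon)$ (up to the sign dictated by the formulas \eqref{eq:Z6a}, \eqref{eq:Z6aa}) and ${\bf b}^{\pm N}(v_{\bf a})\in X^{\pm}_{\bf b}(\epsilon)$. By continuity of ${\bf a}^{\pm N}$ and ${\bf b}^{\pm N}$ we can then enlarge $N$ so that ${\bf a}^{\pm N}$ sends all of $X_{\bf b}(\epsilon):=X^{+}_{\bf b}(\epsilon)\cup X^{-}_{\bf b}(\epsilon)$ into $X^{\pm}_{\bf a}(\epsilon)$ respectively, and symmetrically for ${\bf b}^{\pm N}$ and $X_{\bf a}(\epsilon)$. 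Setting $H_{\bf a}=\langle {\bf a}^{N}\rangle$ and $H_{\bf b}=\langle {\bf b}^{N}\rangle$, the pairs $(H_{\bf a},X_{\bf a}(\epsilon))$ and $(H_{\bf b},X_{\bf b}(\epsilon))$ satisfy the hypotheses of Corollary~\ref{cor:ping-pong}, so $\langle {\bf a}^{N},{\bf b}^{N}\rangle$ is free of rank two.

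The main obstacle is purely computational: verifying the starting containments ${\bf a}^{\pm M}(v_{\bf b})\in C_{\bf a}(\epsilon)$ and ${\bf b}^{\pm M}(v_{\bf a})\in C_{\bf b}(\epsilon)$ with the correct sign of $\kappa$ for each of the two skew-symmetrizations of $F_4^{(*,*)}$. Once these finitely many checks are done, the ping-pong argument concludes the proof, and, as in Corollary~\ref{cor:E_6^11}, it applies simultaneously to $F_4^{(1,1)}$ and $F_4^{(2,2)}$.
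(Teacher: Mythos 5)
Your proposal is correct and follows essentially the same route as the paper: the paper's own proof of this lemma consists precisely of the setup you cite (the sequences ${\bf a}$, ${\bf b}$, the vectors $v_{\bf a}$, $v_{\bf b}$, the linearizations and the functionals $\kappa_{\bf a}$, $\kappa_{\bf b}$) followed by the remark that the cone construction, invariance checks, Maple verification of the seeding containments, and application of Corollary~\ref{cor:ping-pong} proceed exactly as in the $G_2^{(*,+)}$ and $G_2^{(*,*)}$ cases. You have simply written out the details that the paper leaves implicit, including the correct period-three adjustment and the sign flip for ${\bf b}$.
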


Equivalently,

\begin{corollary}\label{lem:F_4**}
Cluster algebras with diagram of type $F_4^{(*,*)}$ have exponential growth.
\end{corollary}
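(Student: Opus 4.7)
The plan is straightforward: this corollary is essentially a reformulation of Lemma~\ref{lem:F_411} in the language of diagrams rather than specific exchange matrices, so the proof reduces to observing that the diagram $F_4^{(*,*)}$ determines its exchange matrix up to finitely many possibilities, all of which have been treated.

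First, I would invoke the discussion preceding Lemma~\ref{lem:F_411}, where it is noted that up to permutation of rows and columns there are exactly two distinct skew-symmetrizable integer matrices whose diagram is $F_4^{(*,*)}$: these are $F_4^{(1,1)}$ (whose diagram is displayed in Fig.~\ref{fig:F_4**}) and $F_4^{(2,2)}$ (whose diagram is obtained from that of $F_4^{(1,1)}$ by reversing the orientation of every arrow, equivalently by taking the transpose of the exchange matrix). By the general discussion in Section~\ref{diagrams}, the diagram together with the constraint that the underlying matrix be skew-symmetrizable pins down the matrix up to these finitely many choices.

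Next, I would recall that the growth of a cluster algebra depends only on the mutation class of its exchange matrix, hence only on the mutation class of its diagram (together with the skew-symmetrization data it encodes). Therefore any cluster algebra $\A$ whose diagram is of type $F_4^{(*,*)}$ is, up to isomorphism, a cluster algebra of type $F_4^{(1,1)}$ or $F_4^{(2,2)}$.

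Finally, applying Lemma~\ref{lem:F_411}, which establishes exponential growth in each of these two cases, we conclude that $\A$ has exponential growth. There is no real obstacle here: the work has already been done in Lemma~\ref{lem:F_411}; the only content of the corollary is the translation from matrix types to the diagram type $F_4^{(*,*)}$, which is immediate from the correspondence between skew-symmetrizable matrices and their diagrams recalled in Section~\ref{diagrams}.
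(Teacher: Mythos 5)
Your proposal matches the paper's own treatment: the paper states the corollary with the single word ``Equivalently,'' precisely because the two skew-symmetrizable matrices with diagram $F_4^{(*,*)}$ are $F_4^{(1,1)}$ and $F_4^{(2,2)}$, both covered by Lemma~\ref{lem:F_411}. Your observation that the only content is the translation from matrix types to the diagram type is exactly the intended reading.
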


\section{Growth rates of affine cluster algebras}\label{sec:Hugh}
We are left with exceptional cluster algebras of affine type. This section is devoted to the proof of linear growth of affine cluster algebras. We start with skew-symmetric (simply-laced) affine cluster algebras (whose diagrams can be understood as quivers), and then use unfoldings to complete the proof of Theorem~\ref{thm-main} in the coefficient-free case.

\smallskip

Let $Q$ be a quiver without oriented cycles, and with $n$ vertices.
Let $A_Q$ be the cluster algebra associated to $Q$.

Let $k$ be an algebraically
closed field.  We write $kQ$ for the path algebra of $Q$, and
\kqm for its module category.  The bounded derived category of this
abelian category is denoted $D^b(kQ)$.  This category is triangulated, and
therefore
equipped
with a shift autoequivalence $[1]$; it also has an Auslander-Reiten
autoequivalence $\tau$.

Given a triangulated category and an
auto-equivalence, there is an orbit category, in which objects
in the same orbit with respect to the autoequivalence are isomorphic.
By definition,
the cluster category is the orbit category
$\mathcal C_Q=D^b(kQ)/[1]\tau^{-1}$, which is again triangulated by a result
of Keller \cite{ke}.  This category is called the cluster category associated
to $Q$, and was introduced in \cite{BMRRT}.

Thanks to the embedding of objects of \kqm as stalk complexes in degree
zero
inside $D^b(kQ)$,
there is a functor from \kqm to $\mathcal C_Q$, which embeds
\kqm as a (non-full) subcategory of $\mathcal C_Q$.
We write $P_i$ for the indecomposable projective $kQ$ module with
simple top at vertex $i$; we also write $P_i$ for the corresponding
object of $\mathcal C_Q$ via the above embedding.

%This gives rise
%to an injection from the indecomposable $kQ$-modules to the indecomposable
%objects in $\mathcal C_Q$.  From now on, we will identify a $kQ$-module
%with its image in $\mathcal C_Q$.

%There are exactly $n$ isomorphism classes of
%indecomposable objects in $\mathcal C_Q$ which do not arise in this way;
%we can identify these with the objects $P_i[1]$ for $1\leq i \leq n$,
%where $P_i$ is the indecomposable projective with simple top supported at
%vertex $i$.

An object $E$ in $\mathcal C_Q$ is called {rigid} if
it satisfies $\Ext^1_{\mathcal C_Q}(E,E)=0$.
%If $E$ is a $kQ$-module,
%$\Ext^1_{\mathcal C_Q}(E,E)=0$ iff $\Ext^1_{kQ}(E,E)=0$.
%The additional
%indecomposable objects $P_i[1]$ are all exceptional.
The crucial result relating the cluster algebra to the cluster category
is the following \cite{CK,BMRTCK}:
the cluster variables in the cluster
algebra $A_Q$ are naturally in one-one correspondence with
the rigid indecomposables of $\mathcal C_Q$.  We will make the (slightly
non-standard) choice of identifying the cluster variable $u_i$ from
the initial seed with $P_i$.
A (basic) cluster tilting object in $\mathcal C_Q$ is the direct sum of
the collection of rigid indecomposables objects corresponding to the
cluster variables of some cluster.

Say that
two rigid indecomposable objects $E, F$ in $\mathcal C_Q$
are compatible if
$\Ext^1_{\mathcal C_Q}(E,F)=0$.  (By the 2-Calabi-Yau property of cluster
categories, this is equivalent to the condition that
$\Ext^1_{\mathcal C_Q}(F,E)=0$.)
Two rigid indecomposables are compatible if and only if the corresponding
cluster variables are both contained in some cluster.
Cluster tilting objects
can be given a representation-theoretic
description: $T$ is a cluster tilting object if $T$ is the direct sum
of a maximal collection of pairwise-compatible distinct rigid indecomposable
objects in $\mathcal C_Q$.

The autoequivalence $\tau$ of $D^b(kQ)$ descends to an autoequivalence of
$\mathcal C_Q$.  It therefore induces an action on the indecomposable
objects of $C_Q$.
Write $X_{i}^p$ for the indecomposable object $\tau^p P_i$, where $p\in
\mathbb Z$ and $1\leq i \leq n$.  These objects are pairwise non-isomorphic,
and each
is rigid.  We refer to these indecomposables as
{\it transjective}.  It will
be convenient to define a function $q$ on the transjective indecomposable
modules by setting $q(X^p_i)=p$.

There are also a finite number of other rigid indecomposable objects
in $\mathcal C_Q$.
They are referred to as the regular rigid indecomposable objects.
They lie in finite $\tau$-orbits.

We now prove a sequence of lemmas:

\begin{lemma}\label{two} Any cluster tilting object contains at least two
(non-isomorphic) indecomposable transjective summands.\end{lemma}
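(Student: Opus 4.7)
The plan is to exploit the standard structure theorem for the cluster category of an affine quiver. For $Q$ of extended Dynkin type, every indecomposable of $\mathcal C_Q$ is either transjective or regular, and the regular indecomposables form a disjoint union of standard tubes indexed by $\mathbb P^1$, only finitely many of which have rank greater than one. A (basic) cluster tilting object $T$ is a direct sum of $n$ pairwise $\Ext^1$-orthogonal rigid indecomposables, so to prove the lemma it suffices to bound the number of \emph{regular} indecomposable summands of $T$ above by $n-2$.

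The first step would be to quote the combinatorial input from Auslander--Reiten theory for tubes: in a tube of rank $r$, an indecomposable of quasi-length at least $r$ has a nonzero self-extension and so is not rigid, while any collection of pairwise $\Ext^1$-orthogonal non-isomorphic rigid indecomposables contained in such a tube has at most $r-1$ elements. In particular, homogeneous tubes (rank one) contribute no summand to $T$. The second step is to sum these bounds over the non-homogeneous tubes. For the extended Dynkin types the non-homogeneous tubes have ranks $(p,q)$ with $p+q=n$ for $\t A_{n-1}$; $(2,2,n-3)$ for $\t D_{n-1}$; and $(2,3,3)$, $(2,3,4)$, $(2,3,5)$ for $\t E_6$, $\t E_7$, $\t E_8$ respectively. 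In each of these cases the identity $\sum_i (r_i - 1) = n-2$ is immediate by inspection.

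Combining these, any cluster tilting object will have at most $n-2$ regular summands and therefore at least $2$ transjective summands, which is the claim. The only nontrivial ingredient is the tube bound of the first step, which is classical in the representation theory of affine hereditary algebras and can simply be invoked; no case-by-case analysis beyond the numerical identity is needed. The one subtlety to watch out for is that ``rigid'' here means $\Ext^1$-vanishing in $\mathcal C_Q$ rather than in $\mod kQ$, but the two notions agree on regular modules via the $2$-Calabi--Yau property of $\mathcal C_Q$, so the classical tube statement transfers to the cluster category without change.
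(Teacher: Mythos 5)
Your argument is correct, but it takes a genuinely different route from the paper's. You bound the regular part of a cluster tilting object quantitatively: the regular rigid indecomposables live in tubes, a rank-$r$ tube can contribute at most $r-1$ pairwise compatible rigid indecomposables, and the defect identity $\sum_\lambda (r_\lambda-1)=n-2$ over the non-homogeneous tubes then caps the number of regular summands at $n-2$, leaving at least two transjective ones. The paper avoids the tube classification entirely. It first rules out a cluster tilting object $X\oplus R$ with exactly one transjective summand: since $R$ is regular it is $\tau$-periodic, say $\tau^m R\simeq R$, so the objects $\tau^{tm}X$ for $t\in\mathbb Z$ all complete $R$ to a cluster tilting object, giving an almost-complete cluster tilting object with infinitely many complements and contradicting the fact that $n-1$ compatible cluster variables lie in either $0$ or $2$ clusters. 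It then rules out zero transjective summands by connectivity of the exchange graph: any mutation path from $\bigoplus_i P_i$ to an all-regular cluster tilting object changes one summand at a time and would have to pass through the impossible one-transjective case. So the paper's proof needs only the finiteness of regular $\tau$-orbits, the two-complements property, and transitivity of mutation, whereas yours needs the tube ranks for each affine type and the per-tube bound $r-1$ (which, as you note, is classical for tilting modules over tame hereditary algebras and transfers to the cluster category since $\Ext^1_{\mathcal C_Q}(X,Y)\cong\Ext^1_{kQ}(X,Y)\oplus D\Ext^1_{kQ}(Y,X)$ for modules, making cluster compatibility at least as restrictive as module $\Ext$-orthogonality). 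In exchange your argument buys the sharper statement that every cluster tilting object has at most $n-2$ regular summands, which the paper's softer argument does not give. One small caveat: justify the per-tube bound by the explicit formula for $\Ext^1$ in the orbit category rather than by a vague appeal to the $2$-Calabi--Yau property, since the latter only gives the symmetry $\Ext^1_{\mathcal C_Q}(E,F)\cong D\Ext^1_{\mathcal C_Q}(F,E)$ and not by itself the comparison with module-theoretic rigidity.
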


\begin{proof} We first
show that no cluster tilting object has exactly one transjective summand.
Suppose that $X\oplus R$ were a cluster tilting object, with $X$
indecomposable tranjective, and $R$ regular.  Since all the regular
indecomposable summands
of $R$ lie in finite $\tau$-orbits, there is some non-zero
$m\in \mathbb Z$ such that
 $\tau^m R \simeq R$.  Since $\tau$ is an auto-equivalence of $\mathcal C_Q$,
it follows that $\tau^{tm}P$ is compatible with $\tau^{tm}R\simeq R$ for
any $t\in\mathbb Z$.  This would mean that the cluster algebra
$A_Q$ has a collection of
$n-1$ cluster variables contained in an infinite number of clusters,
which is impossible.  (It is always the case that
$n-1$ cluster variables are contained in either
$0$ or $2$
clusters.)

It now follows that no cluster tilting object in $\mathcal C_Q$  contains zero
tranjective summands either, since any cluster tilting object can be obtained by
a finite number of mutations from the cluster tilting object $\bigoplus_i P_i$
\cite{BMRRT}.
Since each mutation changes exactly one summand of the cluster
tilting object, a sequence of mutations leading to a cluster tilting object
with no transjective summands would have to pass through a
cluster tilting object with exactly one transjective summand, which we have already shown is impossible. This proves the lemma.
\end{proof}

\begin{lemma} \label{one}
There is a bound $N$ such that
any tranjective rigid indecomposable compatible with $X_i^p$ is of
the form $X_j^r$ with for some $1\leq j\leq n$ and
$p-N\leq r\leq p+N$.  \end{lemma}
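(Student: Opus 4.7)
The plan is to exploit $\tau$-equivariance of $\Ext^1_{\mathcal{C}_Q}$ to reduce the claim to a finiteness statement at the projectives, and then invoke the structure of the transjective component in affine type. Since $\tau$ is an auto-equivalence of $\mathcal{C}_Q$, applying $\tau^{-p}$ to both arguments gives
\[
\Ext^1_{\mathcal{C}_Q}(X_i^p, X_j^r) \;\cong\; \Ext^1_{\mathcal{C}_Q}(\tau^{-p}X_i^p,\, \tau^{-p}X_j^r) \;=\; \Ext^1_{\mathcal{C}_Q}(P_i,\, X_j^{r-p}).
\]
Thus compatibility of $X_i^p$ with $X_j^r$ depends only on $i$, $j$, and $d := r-p$. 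It suffices, for each of the finitely many pairs $(i,j)$, to prove that the set
\[
S_{i,j} \;:=\; \{\, d\in\Z \mid \Ext^1_{\mathcal{C}_Q}(P_i,\, X_j^d)=0 \,\}
\]
is finite; the bound $N$ may then be taken as the maximum of $|d|$ over all $d\in S_{i,j}$ and all pairs $(i,j)$.

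To prove finiteness of $S_{i,j}$, I would use the known structure of the AR-quiver of $\mathcal{C}_Q$ for affine $Q$: the transjective component is the translation quiver $\Z Q$, and the regular component consists of tubes. The key input is that the dimension vectors of the transjective objects grow linearly in the direction of the minimal positive imaginary root $\delta$, i.e.\ $\underline{\dim}(X_j^d) = |d|\,\delta + O(1)$ once $|d|$ is sufficiently large. Combined with the $2$-Calabi-Yau property of $\mathcal{C}_Q$ and Auslander-Reiten duality in \kqm (which together express $\Ext^1_{\mathcal{C}_Q}(P_i, X_j^d)$ in terms of $\Hom$ and $\Ext^1$ groups in \kqm computable from the Euler form on the root lattice), this forces $\dim\Ext^1_{\mathcal{C}_Q}(P_i, X_j^d) \to \infty$ as $|d|\to\infty$. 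In particular, the group vanishes for only finitely many $d$.

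The hardest step is the careful bookkeeping when passing between $\mathcal{C}_Q$ and $D^b(kQ)$, since the object $X_j^d = \tau^d P_j$ is represented, depending on the sign and size of $d$, either by a preprojective module, a preinjective module, or a shifted projective via the relation $[1] = \tau$ inside $\mathcal{C}_Q$; the exact form of the Ext/Hom identification and the corresponding Euler-form computation depends on this. Once these identifications are tracked separately for $d\to+\infty$ and $d\to-\infty$, the $\delta$-growth estimate is routine, and taking the maximum over the finitely many pairs $(i,j)$ yields the uniform bound $N$ asserted by the lemma.
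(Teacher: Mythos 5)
Your reduction via $\tau$-equivariance to compatibility with the projectives $P_i$ is exactly the paper's first step, but from there the two arguments diverge. The paper avoids all representation-theoretic computation: it freezes the vertex $i$, observes that the principal part of the resulting exchange matrix is the quiver $Q$ with vertex $i$ deleted --- a disjoint union of Dynkin quivers, hence of finite cluster type --- and concludes that only finitely many cluster variables, equivalently only finitely many rigid indecomposables, are compatible with $P_i$; the bound $N_i$ then exists for trivial reasons. Your route stays inside the module category: using that $P_i$ is projective, one has $\Ext^1_{\mathcal C_Q}(P_i,M)\cong D\Ext^1_{kQ}(M,P_i)\cong \Hom_{kQ}(P_i,\tau M)$, whose dimension is the $i$-th coordinate of $\underline{\dim}\,\tau M$, and the linear growth of dimension vectors along $\delta$ together with the full support of $\delta$ forces non-vanishing for $|d|$ large. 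Both arguments are correct and both are specific to affine type (the paper through the Dynkin classification of proper subquivers, you through the structure of the preprojective and preinjective components). The paper's version is shorter and requires no case analysis over whether $X_j^d$ is preprojective, preinjective or a shifted projective; yours is quantitative (the obstruction $\dim\Ext^1$ actually tends to infinity rather than merely being non-zero) and does not invoke the finite-type classification of cluster algebras, only classical affine representation theory. Note also that the full claim $\dim\Ext^1\to\infty$ is more than you need: non-vanishing for $|d|\gg 0$, which follows directly from $\delta_i>0$, already yields finiteness of each $S_{i,j}$ and hence the uniform bound $N$.
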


\begin{proof}  Since $\tau$ is an autoequivalence, it suffices to check
the statement for one element in each transjective $\tau$-orbit.
We will check it for each $P_i$.

Fix $i$ with $1\leq i \leq n$.
Consider the cluster algebra $A_i$ obtained by freezing the vertex $i$.  The
principal part of the exchange matrix of $A_i$ corresponds to
the quiver $Q$ with the
vertex $i$ removed.  This is a collection of Dynkin quivers, and thus
corresponds to a cluster algebra of finite type.  It follows that there
are only finitely many cluster variables in $A_i$.  These cluster
variables correspond to the cluster variables of $A_Q$ which are compatible
with $P_i$.  Since there are only finitely many of them, we can pick
a bound $N_i$ so that all the indecomposable transjective objects
compatible with $P_i$ are of the
form $X_j^r$ with $-N_i\leq r \leq N_i$.  Now set $N$ to be the maximum
of all the $N_i$.
\end{proof}

Let $T$ be a cluster tilting object.  Take the mean value of $q(E)$ as
$E$ runs through the transjective indecomposable
summands of $T$ (a non-empty set by
Lemma \ref{two}), and denote that mean value by $q(T)$.

\begin{corollary} \label{cor} If $M$ is a transjective summand of a cluster
tilting object $T$, then
$|q(T)-q(M)|\leq N$.  \end{corollary}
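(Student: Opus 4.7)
The plan is to apply Lemma~\ref{one} pairwise to the transjective summands of $T$ and then average. First I would list the transjective indecomposable summands of $T$ as $M_1, \dots, M_k$; by Lemma~\ref{two} we know $k \geq 2$, so the mean value $q(T) = \frac{1}{k}\sum_{j=1}^{k} q(M_j)$ is well defined.

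Next I would use the fact that $T$ is a cluster tilting object, so all of its summands are pairwise compatible. In particular each pair $M_i, M_j$ is compatible, and since both are transjective, Lemma~\ref{one} gives $|q(M_i) - q(M_j)| \leq N$ for all $i, j$.

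Now suppose $M = M_i$ for some $i$. Writing
\[
q(T) - q(M_i) \;=\; \frac{1}{k}\sum_{j=1}^{k} q(M_j) - q(M_i) \;=\; \frac{1}{k}\sum_{j=1}^{k} \bigl(q(M_j) - q(M_i)\bigr),
\]
the triangle inequality together with the pairwise bound yields
\[
|q(T) - q(M)| \;\leq\; \frac{1}{k}\sum_{j=1}^{k} |q(M_j) - q(M_i)| \;\leq\; \frac{(k-1)N}{k} \;\leq\; N,
\]
which is the desired inequality.

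There is no genuine obstacle here: the corollary is a direct averaging argument once Lemmas~\ref{two} and~\ref{one} are in hand. The only mild subtlety is remembering that the compatibility bound in Lemma~\ref{one} applies to \emph{every} pair of transjective summands of $T$, which is why one needs $k \geq 2$ from Lemma~\ref{two} to make the mean $q(T)$ meaningful before applying the pairwise estimate.
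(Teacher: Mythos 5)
Your argument is correct and is exactly the one the paper intends: the corollary is stated without proof as an immediate consequence of Lemma~\ref{one}, since all transjective summands of $T$ are pairwise compatible and hence have $q$-values within $N$ of $q(M)$, so the mean $q(T)$ does too. (The only trivial quibble: nonemptiness of the set of transjective summands, i.e.\ $k\geq 1$, is all that is needed for the mean to be defined, though Lemma~\ref{two} of course supplies $k\geq 2$.)
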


\begin{lemma} \label{three} If $T$ and $T'$ are cluster tilting objects
related by a single mutation, then
$|q(T')-q(T)|\leq N$.  \end{lemma}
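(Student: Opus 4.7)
The plan is to proceed by case analysis according to whether the summand $M$ of $T$ that is removed and the summand $M^*$ that replaces it are transjective or regular. Let $k$ denote the number of transjective indecomposable summands of $T$ and let $p_1,\dots,p_k$ be their $q$-values, so $q(T)=(p_1+\cdots+p_k)/k$; by Lemma~\ref{two}, $k\geq 2$.

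If neither $M$ nor $M^*$ is transjective, the multiset of transjective summands is unchanged and $q(T')=q(T)$. If $M$ is regular and $M^*$ is transjective, then $T'$ has $k+1$ transjective summands, and a short computation gives
\[
q(T')-q(T)=\frac{q(M^*)-q(T)}{k+1}.
\]
Since $M^*$ is compatible with every transjective summand of $T$, Lemma~\ref{one} yields $|q(M^*)-p_i|\leq N$ for each $i$, whence $|q(M^*)-q(T)|\leq N$ and therefore $|q(T')-q(T)|\leq N/(k+1)<N$. The opposite case, in which $M$ is transjective and $M^*$ is regular, is handled symmetrically; it requires $k-1\geq 2$, which is guaranteed because Lemma~\ref{two} applied to $T'$ forces $T$ to have at least three transjective summands.

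The main obstacle is the case in which both $M$ and $M^*$ are transjective. Here $M$ and $M^*$ are not compatible with each other (they are the two non-isomorphic indecomposables completing the common almost-complete cluster tilting object), so Lemma~\ref{one} does not bound $|q(M)-q(M^*)|$ directly. I plan to route the estimate through a third summand: since $k\geq 2$ by Lemma~\ref{two}, there exists a transjective summand $X$ of $T$ distinct from $M$, and both $M$ and $M^*$ are compatible with $X$. Lemma~\ref{one} then gives $|q(M)-q(X)|\leq N$ and $|q(M^*)-q(X)|\leq N$, so by the triangle inequality $|q(M^*)-q(M)|\leq 2N$. A direct computation yields $q(T')-q(T)=(q(M^*)-q(M))/k$, and since $k\geq 2$ this gives $|q(T')-q(T)|\leq 2N/k\leq N$, completing the argument.
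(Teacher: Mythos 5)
Your proof is correct and takes essentially the same route as the paper's: the same case analysis on whether the exchanged summands are transjective, with the key case resolved by comparing both $M$ and $M^*$ to a common transjective summand of $T$ via Lemma~\ref{one} and dividing the resulting $2N$ bound by the number $k\geq 2$ of transjective summands supplied by Lemma~\ref{two}. The paper merely states the mixed cases more tersely where you write out the arithmetic explicitly.
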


\begin{proof}
Let $M$ be the summand of $T$ which does not appear in $T'$, and let
$M'$ be the summand of $T'$ which does not appear in $T$.  If neither
$M$ not $M'$ is transjective, then $q(T')=q(T)$, and we are done.
Otherwise, without loss of generality, suppose that $M$ is transjective.

If $E$ is any other transjective summand of $T$ (and there is at least one
such $E$ by Lemma \ref{two}), then $|q(E)-q(M)|\leq N$ by Lemma
\ref{one}.  This implies the desired result if $M'$ is not transjective.

If $M'$ is transjective, $|q(M')-q(E)|\leq N$ by Lemma \ref{one} again.  It
follows that the difference between the sum of the $q$-values of $T$
and $T'$ is at most $2N$, and thus their mean values differ by at most $N$.
\end{proof}

\begin{theorem} The growth rate of any affine simply-laced cluster algebra
is linear.\end{theorem}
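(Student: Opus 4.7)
The plan is to convert the Lipschitz-type bound established in Lemma~\ref{three} into a direct count of reachable cluster tilting objects, using the fact that any bounded window of $q$-values admits only a bounded number of compatible collections of rigid indecomposables.

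First I would iterate Lemma~\ref{three}: if $T_0=\bigoplus_i P_i$ is the initial cluster tilting object and $T$ is obtained from $T_0$ by a sequence of at most $M$ mutations, then
\[
|q(T)-q(T_0)|\le MN.
\]
Hence every cluster reachable in $\le M$ mutations corresponds to a cluster tilting object whose $q$-value lies in the window $W_M:=[q(T_0)-MN,\,q(T_0)+MN]$, of length $2MN$.

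Second, I would prove a uniform bound: there exists a constant $C=C(Q,N)$ such that for \emph{any} interval $I\subset\mathbb R$ of length at most $1$, the number of cluster tilting objects $T$ with $q(T)\in I$ is at most $C$. By Corollary~\ref{cor}, every transjective summand $M$ of such a $T$ satisfies $q(M)\in I+[-N,N]$, an interval of length at most $2N+1$; since $q(X_i^p)=p$, this interval contains at most $2N+2$ integer values of $p$, each contributing $n$ transjective indecomposables, so at most $n(2N+2)$ transjective candidates. Together with the finite set of regular rigid indecomposables (size $R$, independent of the interval), all summands of $T$ are drawn from a pool of size at most $n(2N+2)+R$, and since a cluster tilting object has at most $n$ summands, the number of possibilities is at most $\binom{n(2N+2)+R}{n}=:C$.

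Third, combining these two observations: the window $W_M$ is covered by at most $2MN+1$ unit intervals, so the number of cluster tilting objects with $q(T)\in W_M$ is at most $C(2MN+1)$. Since the set of cluster variables is in bijection with rigid indecomposables of $\mathcal C_Q$ and each cluster tilting object is a collection of at most $n$ of these, the number of distinct clusters within mutation distance $M$ of $T_0$ grows at most linearly in $M$. The lower bound (that the growth is at least linear and not merely bounded) is given by the affine types already having infinite mutation class, so the exchange graph is indeed infinite; together with the upper bound this yields linear growth for $\widetilde E_6$, $\widetilde E_7$, $\widetilde E_8$. Finally, to obtain case~(4b) of Theorem~\ref{thm-main} one invokes the unfolding construction of Section~\ref{unf}: the cluster algebras of types $\widetilde G_2$ and $\widetilde F_4$ unfold to skew-symmetric affine cluster algebras, and a sequence of mutations in the folded algebra lifts to a composite sequence in the unfolding, so the exchange graph of the folded algebra embeds (up to bounded multiplicity) into that of the unfolded one, transferring the linear upper bound.

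The main obstacle is the uniformity of the constant $C$ in the second step; this is what requires both the Auslander--Reiten invariance (so that each unit $q$-window contains exactly $n$ transjective indecomposables) and the finiteness of the regular part of $\mathcal C_Q$, which in turn uses the affine hypothesis via Lemma~\ref{one} and the classification of tubes. Everything else is an elementary arithmetic consequence of the Lipschitz bound on $q$.
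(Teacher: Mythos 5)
Your argument follows the paper's proof essentially verbatim: iterate Lemma~\ref{three} to confine $q(T)$ to a window of width linear in the number of mutations, use Corollary~\ref{cor} to confine the transjective summands, and invoke the finiteness of the regular rigid indecomposables. Your second step, which converts the linear bound on the pool of available cluster variables into a uniform bound $C$ per unit $q$-window and hence a genuinely linear count of \emph{clusters} (rather than cluster variables), is a welcome explicit refinement of the paper's final sentence, but it is the same route.
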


\begin{proof}  Pick a starting cluster $T$.  Let $M$ be a transjective
summand of $T$.
Let $T'$ be obtained by applying
$k$ mutations to $T$.  Let $M'$ be any transjective summand of $T'$.
Applying Corollary \ref{cor} twice and Lemma \ref{three} once, it follows
that $|q(M')-q(M)|\leq N(k+2)$.  The number of transjective indecomposable
objects within this range is $2nN(k+2)$, while the number of
regular rigid indecomposable objects is finite.  It follows that the
number of cluster variables which can be obtained by $k$ mutations
starting from a given cluster is linearly bounded in $k$, as desired.
\end{proof}

\begin{corollary}
The growth rate of any affine cluster algebra is linear.
\end{corollary}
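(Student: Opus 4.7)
The plan is to reduce the remaining affine cases to the simply-laced case just established. By the preceding theorem, the exceptional simply-laced affine types $\t E_6, \t E_7, \t E_8$ already have linear growth. The classical affine types $\t A_n, \t B_n, \t C_n, \t D_n$ all arise from triangulated surfaces or orbifolds and are therefore covered by Theorem~\ref{thm:OrbifoldFiniteMutation}. The only affine types left to handle are $\t G_2$ and $\t F_4$, and for these I would invoke the unfolding construction from Section~\ref{unf}: according to the classification in~\cite{FST2}, each exchange matrix of type $\t G_2$ or $\t F_4$ admits a simply-laced affine skew-symmetric unfolding, so it suffices to transport the linear growth bound across an unfolding.

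For the upper bound, let $C$ be an unfolding of $B$. Every mutation sequence $\mu_{k_1}\cdots\mu_{k_m}$ applied to $B$ lifts to the composite sequence $\h\mu_{k_1}\cdots\h\mu_{k_m}$ on $C$, of total length at most $d\cdot m$ with $d=\max_i d_i$. If two sequences on $B$ produce distinct exchange matrices (and hence distinct clusters), their lifts on $C$ produce distinct matrices as well, because the entries $b_{ij}$ are recovered as column sums over $E_i\times E_j$ blocks of the unfolded matrix. Consequently the number of clusters of $\A(B)$ reachable in $m$ mutations is bounded by the number of clusters of $\A(C)$ reachable in $dm$ composite mutations. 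By the preceding theorem this is $O(m)$, so $\A(B)$ has at most linear growth.

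For the lower bound, the exchange graph of $\A(B)$ is $n$-regular (each cluster has exactly $n$ neighbors), and since $\t G_2$ and $\t F_4$ are not of finite type, this graph has infinitely many vertices. A connected infinite graph contains an infinite simple path starting from any chosen root, so the ball of radius $N$ around the initial cluster contains at least $N+1$ vertices. Combining this with the upper bound yields linear growth for $\t G_2$ and $\t F_4$, completing the list of affine types.

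The main bookkeeping obstacle I anticipate is the faithfulness claim used in the upper bound: distinct $B$-clusters must yield distinct $C$-clusters after lifting. This is precisely guaranteed by condition (1) in the definition of unfolding (the column-sum condition pins down $B$ from $C$), together with the commutativity of the mutations $\mu_{\hat\imath}$, $\hat\imath\in E_i$, that was used to define the composite mutation $\h\mu_i$. Given that the explicit unfoldings for $\t G_2$ and $\t F_4$ are already recorded in~\cite{FST2,FST3}, the corollary is essentially a short application of the unfolding machinery to the theorem just proved.
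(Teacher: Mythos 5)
Your overall architecture is the same as the paper's: dispose of $\t A_n,\t B_n,\t C_n,\t D_n$ via the surface/orbifold results, of $\t E_6,\t E_7,\t E_8$ via the categorification theorem just proved, and of $\t G_2,\t F_4$ by folding them into simply-laced affine types through the unfolding construction (the paper uses $\t D_4$ and $\t E_6$ for the two $\t G_2$ matrices, and $\t E_6$ and $\t E_7$ for the two $\t F_4$ matrices). Your trivial lower bound (an infinite connected exchange graph has at least $N+1$ vertices in the ball of radius $N$) is also fine, and is a point the paper leaves implicit.

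The gap is in your justification of the upper bound, specifically the ``faithfulness'' step. What you actually argue is: if two mutation sequences applied to $B$ yield distinct \emph{exchange matrices}, then their lifts yield distinct exchange matrices for $C$ (via the column-sum recovery of $b_{ij}$). From this you conclude a bound on the number of distinct \emph{clusters}. That inference fails: growth is measured by counting vertices of the exchange graph, i.e.\ clusters, and distinct clusters routinely share the same exchange matrix --- indeed for a mutation-finite algebra such as $\t G_2$ or $\t F_4$ the number of distinct exchange matrices in the whole mutation class is a constant, so matrix-level injectivity yields no information whatsoever about the number of clusters. What you need is the implication ``equal clusters upstairs $\Rightarrow$ equal clusters downstairs'' (equivalently, distinct $B$-clusters lift to distinct $C$-clusters), and condition (1) of the definition of unfolding, being a statement about matrix entries only, cannot deliver it. A correct argument requires compatibility of the unfolding with cluster \emph{variables}: there is a folding identification of the variables $x_{\hat\imath}$, $\hat\imath\in E_i$, with $x_i$, under which $\h\mu_w$ covers $\mu_w$ on the level of seeds; this is established in \cite{FST2,FST3} (where in fact a quasi-isometry of the relevant exchange graphs is proved), and it is this fact --- not the matrix condition --- that the paper is invoking when it asserts that the growth rate of a cluster algebra is not faster than that of its unfolding.
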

\begin{proof}
The diagrams $\t B_n$ and $\t C_n$ are s-decomposable, therefore, the vertices of the exchange graph of any cluster algebra with one of these diagrams are indexed by the triangulations of the corresponding orbifold (depending only on the diagram). This implies that the growth is the same for any skew-symmetrizable matrix with diagram $\t B_n$ (or $\t C_n$). Further, for any of these diagrams there is a matrix with an affine unfolding ($\t D_{n+1}$ for $\t B_n$, and $\t D_{n+2}$ for $\t C_n$). Since the growth rate of any cluster algebra is not faster than the growth rate of any its unfolding, we obtain linear growth for any cluster algebra with diagram $\t B_n$ or $\t C_n$.

For either of the diagrams $\t F_4$ and $\t G_2$ there are two skew-symmetrizable matrices with these diagrams, see~\cite[Table~6.3]{FST2}. All four matrices have affine unfoldings, namely, $\t E_6$ and $\t E_7$ for $\t F_4$, and $\t D_4$, $\t E_6$ for $\t G_2$. Again, this implies linear growth.
\end{proof}

The latter Corollary accomplishes the proof of Theorem~\ref{thm-main} in coefficient-free case.

\section{Coefficients}
\label{coeff}

In this section we prove the following lemma.

\begin{lemma}
\label{coefficients}
The growth rate of a cluster algebra does not depend on its coefficients.

\end{lemma}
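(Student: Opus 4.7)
The plan is to exploit the functoriality of the exchange graph under coefficient specialization, combined with the $g$-vector technology already developed in Section~\ref{sec:g-vectors}. For any coefficient tuple $\mathbf{y}$, the mutation rules for cluster variables and exchange matrices determine a natural graph map $\mathbb{T}_n \to \mathrm{Ex}(\mathcal{A}(B,\mathbf{y}))$ sending a vertex of the $n$-regular tree to the seed reached by the corresponding mutation sequence. Forgetting coefficients can only collapse seeds and never split them, so the identification relation on $\mathbb{T}_n$ induced by passing to $\mathrm{Ex}(\mathcal{A}(B,\mathbf{y}))$ is a refinement of the one induced by passing to $\mathrm{Ex}(\mathcal{A}(B))$. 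This yields a canonical surjective graph homomorphism $\mathrm{Ex}(\mathcal{A}(B,\mathbf{y})) \twoheadrightarrow \mathrm{Ex}(\mathcal{A}(B))$ that is distance non-increasing and restricts to a surjection of radius-$N$ balls. Consequently the number of vertices in the radius-$N$ ball of $\mathrm{Ex}(\mathcal{A}(B,\mathbf{y}))$ is at least that of $\mathrm{Ex}(\mathcal{A}(B))$, and the growth rate with coefficients dominates the coefficient-free growth rate.

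For the reverse inequality I would argue that the surjection above is in fact an isomorphism. The $g$-vector mutation formula (\ref{eq:g-mutation}) depends only on the exchange matrix $B$, not on the coefficients, so the tuple of $g$-vectors attached to each vertex of $\mathbb{T}_n$ is an invariant of $B$ alone. Equal seeds in $\mathcal{A}(B,\mathbf{y})$ trivially have equal $g$-vector tuples. Conversely, the $g$-vector conjecture of \cite{FZ4} asserts that seeds with identical $g$-vector tuples coincide; this is established in the skew-symmetric case in \cite{DWZ2}, and transfers via the unfolding construction of Section~\ref{unf} to each of the skew-symmetrizable mutation-finite types appearing in Theorem~\ref{thm-main}, since $g$-vector mutation commutes with the composite mutations of an unfolding. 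Hence the identifications made on $\mathbb{T}_n$ are independent of $\mathbf{y}$, the two exchange graphs are canonically isomorphic, and their growth rates coincide.

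For mutation-infinite algebras the result is uniform across coefficient systems: Lemma~\ref{inf-exp} yields exponential growth from properties of the mutation class itself, which does not depend on the choice of coefficients. The only substantive input is thus the $g$-vector parametrization of seeds in the skew-symmetrizable setting, and I expect this to be the main (though not truly difficult) point; for all mutation-finite exchange matrices covered by Theorem~\ref{thm-main} it reduces to the skew-symmetric case through the unfoldings recalled in Section~\ref{unf}, so no new case analysis is required.
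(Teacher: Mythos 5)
Your first half --- the covering map from the exchange graph with coefficients onto the coefficient-free exchange graph, giving the inequality in one direction --- is exactly the paper's opening observation and is fine, as is the remark that the mutation-infinite case needs nothing. The gap is in your second half, at the sentence ``Equal seeds in $\mathcal{A}(B,\mathbf{y})$ trivially have equal $g$-vector tuples.'' This is trivial only for \emph{principal} coefficients, where the $g$-vector is read off as the degree of the cluster variable with respect to the natural $\mathbb{Z}^n$-grading. For the coefficient-free algebra --- which is the one you must compare against --- it is precisely the hard direction: you need that two vertices of $\mathbb{T}_n$ carrying the same coefficient-free seed carry the same $g$-vector tuple, and a priori the coefficient-free cluster variables (the specializations of the principal-coefficient ones) contain strictly less information than their lifts, so distinct $g$-vector tuples could conceivably specialize to the same seed. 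Establishing this implication is essentially equivalent to Fomin--Zelevinsky's conjecture that the exchange graph depends only on the exchange matrix (\cite[Conjecture 4.14]{FZ3}), and in the skew-symmetric case its proof is the main content of \cite{CKLP} (linear independence of cluster monomials); it is not a formal consequence of the $g$-vector recursion (\ref{eq:g-mutation}). The paper does not re-prove anything here: it simply quotes the known cases of that conjecture --- finite type \cite{FZ2}, rank $2$, surfaces \cite{FT}, orbifolds \cite{FST3}, skew-symmetric algebras \cite{CKLP} --- and settles case (4b) by unfolding.

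A secondary point: you propose to reach all skew-symmetrizable mutation-finite types from the skew-symmetric case by unfolding, but the paper uses unfolding only for the exceptional affine types (4b); the s-decomposable cases (2b) and (3) are handled via the identification of the exchange graph with the tagged arc complex of a surface or orbifold, which is coefficient-independent by construction. If you insist on the unfolding route you would additionally have to show that the identification of seeds in the folded algebra is controlled by the identification of seeds in its unfolding; the construction recalled in Section~\ref{unf} compares growth rates, not exchange graphs, so this requires an extra argument. With the appeal to \cite{CKLP} and to the surface/orbifold results made explicit in place of the word ``trivially,'' your argument becomes a correct, slightly rearranged version of the paper's proof.
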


\begin{proof}
It is easy to see from the definition that the exchange graph of a cluster algebra covers the exchange graph of the coefficient-free cluster algebra with the same exchange matrix. In particular, we have nothing to prove for algebras with exponential growth, so we only need to explore cases (1)--(4) from Theorem~\ref{thm-main}.

In~\cite{FZ3}, Fomin and Zelevinsky conjectured~\cite[Conjecture 4.14]{FZ3} that the exchange graph of a cluster algebra depends only on the exchange matrix. This conjecture is known to be true in many cases, including:
\begin{itemize}
\item
for cluster algebras of finite type~\cite{FZ2}, which covers case (2a);
\item
for cluster algebras of rank 2 (immediately following from the finite type case), which covers case (1);
\item
for cluster algebras from surfaces~\cite{FT} and orbifolds~\cite{FST3}, which covers cases (2b) and (3).
\item
for skew-symmetric cluster algebras~\cite{CKLP}, which covers case (4a) (and
parts of the previous cases).
\end{itemize}

Further, the unfolding argument does not depend on coefficients, so case (4b) is implied by (4a).
\end{proof}

This completes the proof of Lemma~\ref{coefficients} and thus also the proof of Theorem~\ref{thm-main}.

\bibliographystyle{amsalpha}

\end{document}